\newtheorem{thm}{Theorem}[section]
\newtheorem{prop}[thm]{Proposition}
\newtheorem{lem}[thm]{Lemma}
\theoremstyle{definition}
\theoremstyle{remark}
\newtheorem{rem}{Remark}[section]
\def\eqref#1{(\ref{#1})}
\newcommand{\bbR}{{\mathbb{R}}}
\newcommand{\bbD}{{\mathbb{D}}}
\newcommand{\Lap}{\triangle}
\newcounter{labelflag} \setcounter{labelflag}{0}
\newcommand{\Label}[1]{
                       \ifnum\thelabelflag=1
                          \ifmmode
                             \makebox[0in][l]{\qquad\fbox{\rm#1}}
                          \else
                             \marginpar{\vspace{0.7\baselineskip}
                                        \hspace{-1.1\textwidth}
                                        \fbox{\rm#1}}
                          \fi
                       \fi
                       \label{#1}
                      }
\begin{document}

\baselineskip =1.2\baselineskip

\begin{titlepage}
\title{Poisson-Nernst-Planck Systems for Narrow Tubular-like Membrane Channels}
\author{Weishi Liu\footnote{Partially supported by NSF Grant DMS-0406998.}\\
 Department of Mathematics\\ University of Kansas,
Lawrence, KS 66045 \vspace{1cm}\\
Bixiang Wang\footnote{Partially supported by NSF Grant DMS-0703521.}\\
Department of Mathematics\\
New Mexico Institute of Mining and Technology, Socorro, NM 87801}
\date{}
\end{titlepage}

\maketitle

\begin{abstract}  We study global dynamics of the Poisson-Nernst-Planck (PNP)
system for flows of two types of ions through a narrow tubular-like membrane channel.
As the    radius of the cross-section of the three-dimensional tubular-like membrane
channel approaches zero, a one-dimensional limiting PNP system is derived.
This one-dimensional limiting system differs from previous studied one-dimensional
 PNP systems in that it encodes the defining geometry of the three-dimensional membrane
channel. To justify  this limiting process, we show that the global  attractors
of the three-dimensional PNP systems  are upper semi-continuous to that
 of the  limiting PNP system. We then examine the dynamics of the
one-dimensional limiting PNP system. For large Debye number, the
steady-state of the one-dimensional limiting PNP system is completed
 analyzed using the geometric singular perturbation theory. For a special
case, an entropy-type Lyapunov functional is constructed to show the global,
asymptotic stability of the steady-state.
\end{abstract}

{\bf Key words.}    Poisson-Nernst-Planck system, global attractor, steady solution.

 {\bf MSC 2000.} Primary 37L55. Secondary   35B40.

\newpage

\section{Introduction}
\setcounter{equation}{0}
 Poisson-Nernst-Planck  (PNP) systems serve as basic electro-diffusion equations modeling, for example, ion flow through   membrane channels, transport of holes and electrons in  semiconductors  (see, e.g., \cite{BCE, BCEJ, KS, Rub} and the references therein).
There are many excellent works   on derivation of PNP systems from Boltzmann equations (see \cite{BCE} and reference therein) by assuming the collision time is much smaller than the characteristic time.
The PNP systems have been studied under various physical relevant boundary conditions
such as  the non-flux, homogeneous Dirichlet and  Neumann boundary conditions.
 For those types of boundary conditions, in addition to the total charge conservation   and the existence of various first integrals, Boltzmann H-functionals or entropy-like functionals are successfully constructed, which, together with the advances of Csisz\'ar-Kullback-type or logarithmic Soblev inequalities, are applied to investigate the  asymptotic behavior of the PNP systems and stability  of
 steady-state or self-similar solutions (see, e.g., \cite{BD, BHN1, GG1, GG2, Gro1,  Moc1, Seid1}). In the context of ion flow through membrane channels, it is physically unreasonable to impose the above mentioned boundary conditions on the whole boundary, particularly at the two ``ends'' of the channels. Instead, non-homogeneous Dirichlet conditions on the two ``ends'' are typically assumed. PNP systems supplemented with this type of boundary conditions result in quite different dynamical behavior.  The total charges within the channels are not conserved and entropy-like functionals are not available in general, and, most significantly, the asymptotic behavior is different as showed in special cases in the present paper.

In this  work, we  start a systematic  study   of the  PNP systems
modeling   ion flows through narrow tubular-like membrane channels
 in physiology.  For definiteness,  consider flow of two types of ions, $S_1$ and $S_2$, one with the positive valence $\alpha_1>0$ and the other with the negative valence $-\alpha_2<0$, passing through a membrane channel   with length
 normalized from $X=0$ to $X=1$.   Denote the concentrations of
$S_1$ and $S_2$ at location $(X,Y,Z)$ and   time $t$ by
$c_1(t, X,Y,Z)$ and $c_2(t, X,Y,Z)$.
Then the electric potential  $\Phi(t, X,Y,Z)$ in the channel   is
 governed  by the Poisson equation
\[  \Lap\Phi=-\lambda(\alpha_1 c_1-\alpha_2c_2),\]
where the parameter  $ \lambda$ is the Debye number related to the ratio of the Debye length to a characteristic  length scale. The  flux densities,
$\bar{J}_1$  and $\bar{J}_2$, of the two ions contributed from the
concentration gradients of the two ions and  the electric field satisfy the
Nernst-Planck equations
\[D_1(\nabla c_1+\alpha_1 c_1\nabla \Phi)=-\bar{J}_1\; \mbox{ and }\;
      D_2 (\nabla c_2-\alpha_2 c_2\nabla \Phi)=-\bar{J}_2,\]
      and
      the  continuity equations
 \[\frac{\partial c_1}{\partial t}+ \nabla\bar{J}_1 =0,
 \quad
 \frac{\partial c_2}{\partial t}+ \nabla\bar{J}_2=0,\]
  where $D_1$ and $D_2$ are the diffusion constants of ions $S_1$ and $S_2$
 relative to the membrane channel.
The Poisson-Nernst-Planck system is thus given by
\begin{align}\label{pnp}\begin{split}
  \Lap\Phi=&-\lambda (\alpha_1 c_1-\alpha_2c_2),\\
 \frac{\partial c_1}{\partial t}=& D_1\nabla \cdot
   (\nabla c_1+\alpha_1 c_1\nabla \Phi),\\
    \frac{\partial c_2}{\partial t}=& D_2 \nabla \cdot
 (\nabla c_2-\alpha_2 c_2\nabla \Phi).
 \end{split}
 \end{align}

  PNP systems have been studied by many authors
 (see, e.g., \cite{BCE, BCEJ, BD,     BHN1, GG1, GG2, Gro1, Hom,  Jer,  JeK,   KS, Moc1,       PJ,  Rub,  Seid1}).  Many   works have been attributed to the one-dimensional PNP systems and particularly the steady-state problems (see, e.g.,~\cite{Hom, Jer, PJ, JeK}). Consideration of one-dimensional PNP systems is motivated naturally by the fact that membrane channels are narrow. To make this reduction more rigorous, we present a mathematical framework based on the ideas in \cite{HR1, HR2} and investigate mathematically the limiting process as the three-dimensional domain shrinks to a line segment. More specifically,  starting with the   situation that $\Omega_{\epsilon}$ is a revolution domain about its length ($\epsilon$ is related to the maximal radius of the cross-sections of the channel), we will derive a one-dimensional limiting PNP system as  $\epsilon\to 0$. Differing from the simple one-dimensional version of the  PNP system, this limiting PNP system encodes the  defining geometry of the three-dimensional channel. As the first step in justifying the limiting process,   we show  the upper semi-continuity of the  global attractors ${\mathcal A}_{\epsilon}$
 of  the three-dimensional systems at $\epsilon=0$. The existence of global attractors for the PNP systems  can be found in \cite{GG2}. It is expected that if the one-dimensional limiting system is structurally stable, then its dynamics determines that of three-dimensional system for $\epsilon>0$ small. We will thus examine, in this paper, the steady-state problem of the one-dimensional limiting system. For large Debye number, the steady-state problem can be viewed as a singularly perturbed one. We show that this problem can be completely analyzed using the geometric singular perturbation theory as in~\cite{Liu1}.

 The rest of the paper is organized as follows. In Section 2, we give detailed formulation of our problem. The domain for the three-dimensional PNP system will be specified and, as the domain shrinks to a one-dimensional segment, a one-dimensional limiting PNP system is derived. We then state our results  on the upper semi-continuity of   attractors, on the singular boundary value problem of the steady-state PNP system. The proofs are provided in Sections 3 and 4.  In Section 3,   after some  technical preparations, we  show the   upper semi-continuity of   attractors. Section 4 is devoted to the geometric analysis of the singularly perturbed steady-state problem of the one-dimensional limiting PNP system. At the end of this section, a special case is studied for which a $H$-function is found and used to establish the asymptotic stability of the steady-state.

\section{Formulation of the problem and  the statements of main results}
\setcounter{equation}{0}
\subsection{Three-dimensional PNP and a one-dimensional limit}
 We  start  with setting up   our problem.  The membrane channel considered here is special and  will be  viewed as a tubular-like domain $\Omega_{\epsilon}$ in $  \bbR^3$ as follows:
\[\Omega_{\epsilon}=\{(X,Y,Z):  0<X<1, \; Y^2+Z^2  < g^2 (X,\epsilon)\},\]
where   $g$ is a  smooth (at least ${\mathcal C}^3$)    function  satisfying
 \begin{equation}\label{gC}
 g(X,0)=0\;\mbox{ and }\; g_0(X)=\frac{\partial g}{\partial\epsilon}(X,0)>0 \;\mbox{ for
}\; X \in [0,1].
\end{equation}
The positive parameter $\epsilon$  measures the sizes of cross-sections of the membrane channel.  For a technical reason (used in Lemma~\ref{foliation}), we also
assume that
\[ {\frac {\partial g}{\partial X}} (0,\epsilon)= {\frac {\partial g}{\partial X}}   (1,\epsilon)=0.\]
The boundary  $\partial\Omega_{\epsilon}$ of $\Omega_{\epsilon}$
will be     divided into three portions as follows:
\begin{align*}
\hat{L}_{\epsilon}=&\{(X,Y,Z)\in\partial\Omega_{\epsilon}: X=0\},\\
\hat{R}_{\epsilon}=&\{ (X,Y,Z)\in\partial\Omega_{\epsilon}: X=1\},\\
\hat{M}_{\epsilon}=&\{ (X,Y,Z)\in\partial\Omega_{\epsilon}:
   Y^2+Z^2= g^2(X, \epsilon)\}.
 \end{align*}
Thus, $\hat{L}_{\epsilon}$ and $\hat{R}_{\epsilon}$ are viewed as the two ends of the channel and $\hat{M}_{\epsilon}$ the wall of the channel. The boundary conditions considered in this paper are
\begin{align}\label{bv}\begin{split}
\Phi|_{\hat{L}_{\epsilon}} =\phi_0>0,   \quad  \Phi|_{\hat{R}_{\epsilon}}=0, &
 \quad
c_k|_{\hat{L}_{\epsilon }} =l_k>0,  \quad c_k|_{\hat{R}_{\epsilon }}=r_k>0,\\
\frac{\partial \Phi}{\partial {\bf n}}|_{\hat{M}_{\epsilon }}
 &=\frac{\partial c_k}{\partial {\bf n}}|_{\hat{M}_{\epsilon }}= 0,
 \end{split}
\end{align}
where $\phi_0$, $l_k$ and $r_k$ ($k=1,2$) are  constants, and  ${\bf n}$ is the outward unit  normal vector to $\hat{M}_{\epsilon }$.  Although   the most natural  boundary conditions on $\hat{M}_{\epsilon}$ would be the non-flux one
\[\left(\frac{\partial c_1}{\partial {\bf n}}+\alpha_1c_1\frac{\partial \Phi}
 {\partial {\bf n}}\right)|_{\hat{M}_{\epsilon }}
 =\left(\frac{\partial c_2}{\partial {\bf n}}-\alpha_2c_2\frac{\partial \Phi}{\partial {\bf n}}\right)|_{\hat{M}_{\epsilon }} = 0,\]
 the above homogeneous Neumann conditions on $\hat{M}_{\epsilon}$ are reasonable (they  are the consequences of the non-flux and zero-outward electric field conditions).

   In this  paper,  we are interested in the  limiting behavior of the PNP system when the three-dimensional tubular-like domain $\Omega_\epsilon$ collapses to a   one-dimensional interval as $\epsilon\to 0$. Naturally we expect a one-dimensional limiting system whose global dynamics is comparable with those of PNP systems for $\epsilon>0$ small. This important idea was applied by many researchers in studying the dynamics of equations defined on
 thin domains (see, e.g.,  \cite{HR1, HR2, RS1, TZ1}). We follow the procedure in \cite{HR2} to derive a one-dimensional limiting system but avoid expressing differential operators and transformations in local coordinates. As a result, the expected one-dimensional limiting system is  more transparent.

  To derive the limiting PNP system, we   transfer
 the $\epsilon$-dependent domain $\Omega_{\epsilon}$
 into a fixed domain  $\Omega=[0,1]\times \bbD$,  where    $\bbD$
 is   the unit disk, by applying  the following change of  coordinates:
\begin{equation}\label{chvar}
 x=X,\;y = {\frac Y{g(X,\epsilon)}},\; z={\frac Z{g(X,\epsilon)}}.
\end{equation}

In the sequel,  we denote by $L$, $R$ and $M$, respectively, the  boundaries of $\Omega$  corresponding to $\hat{L}_{\epsilon}$, $\hat{R}_{\epsilon}$ and $\hat{M}_{\epsilon}$ under the transformation. Let $J$ denote the Jacobian matrix of the change of coordinates. Then,
\[J=\frac{\partial(x,y,z)}{\partial(X,Y,Z)}=
\frac{1}{g^2}\left(\begin{array}{ccc}
g^2 & 0 & 0\\
-gg_xy & g  &  0\\
-gg_xz &  0 & g
\end{array}\right),\quad
J^{-1}=\frac{\partial(X,Y,Z)}{\partial(x,y,z)}=
\left(\begin{array}{ccc}
1 & 0 & 0\\
g_xy & g  &  0\\
g_xz &  0& g
\end{array}\right)\]
with  $\det(J^{-1})=g^2(x,\epsilon)$, and
\[JJ^{\tau}=
\frac{1}{g^4}\left(\begin{array}{ccc}
g^4 & -g^3g_xy & -g^3g_xz\\
-g^3g_xy   & g^2+g^2g_x^2y^2  &  g^2g_x^2yz\\
 -g^3g_xz & g^2g_x^2yz  & g^2+g^2g_x^2z^2
\end{array}\right).\]
The following result, which can be verified by direct computations, is useful for a clean derivation of a limiting PNP system.
 \begin{lem}\label{coordinaterelation}
 Let $\psi:\bbR^n\to \bbR^n$, $\psi(p)=q$, be a diffeomorphism, and let
  $J(q)=\frac{\partial q}{\partial p}(\psi^{-1}(q))$ be the Jacobian matrix and $d(q)=(\det J(q))^{-1}$.
If $\alpha(p)=\beta(\psi(p)):\bbR^n\to \bbR$ is a smooth function,
  then the gradients   in the two coordinates are related as
 \[\nabla_p\alpha(p)=J^{\tau}(q)\nabla_q\beta(q).\]
 Further, if $\sum\limits_{j=1}^n {\frac {\partial}{\partial q_j}}
 \left (d(q) {\frac {\partial q_j}{\partial p_i}}\right ) =0$ for all $i=1, \cdots, n$, and
 $f: \bbR^n\to \bbR^n$ is a smooth vector field, then $F(p) = f(\psi(p))$ satisfies
 \[  \nabla_p\cdot F(p)=\frac{1}{d(q)}\nabla_q\cdot\left(d(q)J(q)f(q)\right),\]
  and  hence, the Laplace operators are related as
\[\Lap_p\alpha(p)=\frac{1}{d(q)}\nabla_q\cdot
 \left(d(q)J(q)J^{\tau}(q)\nabla_q\beta(q)\right).\]
   \end{lem}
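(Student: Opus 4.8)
The plan is to prove all three assertions by nothing more than the chain rule and the Leibniz rule, being careful about which variables the differentiations act on. The one convention to fix at the outset is that each entry $\partial q_j/\partial p_i$ of the Jacobian is to be regarded as a function of $q$ via the substitution $p=\psi^{-1}(q)$, so that $J(q)$, $d(q)=(\det J(q))^{-1}$ and their $q$-derivatives all make sense. With this in place, the gradient identity is immediate: differentiating $\alpha(p)=\beta(\psi(p))$ gives, for each $i$,
\[\frac{\partial\alpha}{\partial p_i}(p)=\sum_{j=1}^n\frac{\partial\beta}{\partial q_j}(q)\,\frac{\partial q_j}{\partial p_i},\]
which is exactly the $i$-th component of $J^{\tau}(q)\nabla_q\beta(q)$ because $J_{ji}=\partial q_j/\partial p_i$.

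The divergence identity is the substantive step, and I would attack it by expanding both sides independently. On the left, writing $F_i(p)=f_i(\psi(p))$ and using the chain rule,
\[\nabla_p\cdot F(p)=\sum_{i=1}^n\frac{\partial F_i}{\partial p_i}=\sum_{i,j=1}^n\frac{\partial f_i}{\partial q_j}(q)\,\frac{\partial q_j}{\partial p_i}.\]
On the right, applying the product rule and splitting according to whether the $q_j$-derivative lands on the coefficient $d\,J$ or on $f$,
\[\nabla_q\cdot\big(d(q)J(q)f(q)\big)=\sum_{k=1}^n f_k(q)\sum_{j=1}^n\frac{\partial}{\partial q_j}\Big(d(q)\frac{\partial q_j}{\partial p_k}\Big)+d(q)\sum_{j,k=1}^n\frac{\partial q_j}{\partial p_k}\,\frac{\partial f_k}{\partial q_j}.\]
The first double sum vanishes term by term by the standing hypothesis (applied with $i=k$), and the second double sum is precisely $d(q)\,\nabla_p\cdot F(p)$ by the chain-rule expansion above; dividing by $d(q)$ gives the formula. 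The Laplacian formula is then a corollary: by the gradient identity, $\nabla_p\alpha(p)$ equals $f(\psi(p))$ with $f(q)=J^{\tau}(q)\nabla_q\beta(q)$, so feeding this $f$ into the divergence identity yields $\Lap_p\alpha(p)=\frac{1}{d(q)}\,\nabla_q\cdot\big(d(q)J(q)J^{\tau}(q)\nabla_q\beta(q)\big)$.

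The whole argument is elementary and requires no estimates or additional structure; the only thing demanding care is the bookkeeping. One must consistently treat $\partial q_j/\partial p_i$ as a function of $q$ before differentiating it in $q_j$, and one must recognize that the hypothesis $\sum_j\partial_{q_j}\!\big(d\,\partial q_j/\partial p_i\big)=0$ is exactly — and only — what is needed to annihilate the term in which a $q$-derivative falls on the coefficient matrix $d\,J$ rather than on $f$ (respectively on $\nabla_q\beta$ in the Laplacian case). I expect that to be the sole subtle point.
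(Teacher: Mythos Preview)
Your proof is correct and is precisely the ``direct computation'' the paper alludes to but does not write out; the paper gives no proof beyond that remark, so your chain-rule/Leibniz expansion is exactly what is intended.
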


It can be checked that the change of variables in \eqref{chvar} with $p=(X,Y,Z)$ and $q=(x,y,z)$ satisfies
 \[\sum\limits_{j=1}^n {\frac {\partial}{\partial q_j}}
 \left (d(q) {\frac {\partial q_j}{\partial p_i}}\right ) =0.\]
  Therefore,
 applying Lemma \ref{coordinaterelation},  system~\eqref{pnp} can be rewritten,  in terms of $(x,y,z)$, as follows.
\begin{align}\label{newPNP}\begin{split}
\frac{1}{g^2}&\nabla\cdot\left(g^2 JJ^{\tau}\nabla\Phi\right)
 =-\lambda (\alpha_1c_1-\alpha_2c_2), \\
  \frac{\partial c_1}{\partial t}&= \frac{D_1}{g^2}\nabla \cdot
   (g^2JJ^{\tau}\nabla c_1 +\alpha_1c_1g^2 JJ^{\tau}\nabla \Phi),\\
    \frac{\partial c_2}{\partial t}&= \frac{D_2}{g^2} \nabla \cdot
 (g^2JJ^{\tau} \nabla c_2 - \alpha_2 c_2 g^2 JJ^{\tau}\nabla \Phi),
     \end{split}
 \end{align}
 with  the boundary conditions
\begin{align}\label{newBV}\begin{split}
\Phi|_{L } =\phi_0,   \quad  \Phi|_{R}=0, & \quad
c_k|_{L} =l_k,  \quad c_k|_{R}=r_k,\\
 \langle \nabla \Phi, JJ^{\tau}{\bf \nu}\rangle |_{M }
 &= \langle \nabla c_k, JJ^{\tau}{\bf \nu}\rangle |_{M }= 0,
 \end{split}
\end{align}
where $k=1,2$ and  ${\bf \nu}$ is the outward unit normal vector  to $M$.

By inspecting the structural dependence of $JJ^{\tau}$ on
 $\epsilon$, we expect the one-dimensional limiting  PNP system to be
 \begin{align}\label{genlimPNP}
\begin{split}
 & {\frac 1{g^2_0}} {\frac {\partial}{\partial x}} \left (
g_0^2 {\frac {\partial}{\partial x}} \Phi \right )
 =-\lambda(\alpha_1 c_1-\alpha_2 c_2),\\
& \frac{\partial c_1}{\partial t}
   = \frac{D_1}{g_0^2} {\frac {\partial}{\partial x}}
     \left( g_0^2  {\frac {\partial}{\partial x}}c_1+\alpha_1c_1g_0^2
  {\frac {\partial}{\partial x}} \Phi\right),\\
   &   \frac{\partial c_2}{\partial t}= \frac{D_2}{g_0^2}
  {\frac {\partial}{\partial x}}
    \left(g_0^2 {\frac {\partial}{\partial x}} c_2
  -\alpha_2 c_2 g_0^2{\frac {\partial}{\partial x}}\Phi \right),
 \end{split}\end{align}
on $x\in (0,1)$ with the   boundary conditions
\begin{equation}\label{genlimBV}
 \Phi(t,0) =\phi_0,   \quad  \Phi(t,1)=0,   \quad
c_k(t,0) =l_k,  \quad c_k(t,1)=r_k,
\end{equation}
 where $g_0(x)$ is defined in \eqref{gC}.

It was shown in \cite{GG2} that, for any $\epsilon>0$,   the  three-dimensional PNP system has a global attractor  ${\mathcal{A}}_\epsilon$ which is a compact subset and attracts all solutions with respect to the norm topology of $H^1 \times H^1$. This result is based on an invariant principle discovered in   \cite{Gro1, Gro2, GG2, Seid1} for the van Roosbroeck models of semi-conductor.    The PNP systems are basically the same as the van Roosbroeck models and  we   recall  the  invariant principle  using the above setting.

\begin{prop}
\label{invariantregion} Let $M$ be a positive constant with
 \[ M \ge \max \{ \alpha_1 l_1,
\alpha_1 r_1, \alpha_2 l_2, \alpha_2 r_2 \},
\]
and let ${\tilde{\Sigma}}$ be the subset of $H^1(\Omega_{\epsilon})
\times H^1(\Omega_{\epsilon})$ given by
\[\tilde{\Sigma} = \{ (c_1, c_2) \in H^1(\Omega_{\epsilon}) \times
H^1(\Omega_{\epsilon}): \ 0 \le \alpha_1 c_1 \le M, \ 0\le
\alpha_2 c_2 \le M \}.\]
Then   ${\tilde{\Sigma}}$ is positively invariant for the
PNP system. More precisely, if the initial datum
$(c_1(0), c_2(0))\in {\tilde{\Sigma}}$ and $(c_1, c_2)$ is the solution of the PNP system,
 then $(c_1(t), c_2(t)) \in {\tilde{\Sigma}}$ for all $t \ge 0$.
\end{prop}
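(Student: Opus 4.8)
The plan is to prove this via a maximum-principle / De Giorgi-type truncation argument applied to the parabolic equations for $c_1$ and $c_2$, exploiting the fact that the thresholds $0$ and $M/\alpha_k$ are compatible with the Neumann data on $\hat M_\epsilon$ and sandwich the Dirichlet data $l_k, r_k$ on $\hat L_\epsilon \cup \hat R_\epsilon$. First I would fix $k=1$ (the case $k=2$ is identical with the sign of the drift term reversed) and set $u = \alpha_1 c_1$, which solves $\partial_t u = D_1\nabla\cdot(\nabla u + u\nabla\Phi)$ with $u|_{\hat L_\epsilon}=\alpha_1 l_1 \in[0,M]$, $u|_{\hat R_\epsilon}=\alpha_1 r_1\in[0,M]$, and $(\partial_{\bf n}u + u\,\partial_{\bf n}\Phi)|_{\hat M_\epsilon}=0$; note that by the boundary conditions \eqref{bv} we have $\partial_{\bf n}\Phi|_{\hat M_\epsilon}=0$, so in fact $\partial_{\bf n}u|_{\hat M_\epsilon}=0$ as well. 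I would then test the equation for $u$ against the truncation $(u-M)^+$ and separately against $-u^- = -(-u)^+$.

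For the upper bound, multiply $\partial_t u = D_1\nabla\cdot(\nabla u + u\nabla\Phi)$ by $w:=(u-M)^+$ and integrate over $\Omega_\epsilon$. Since $w$ vanishes on $\hat L_\epsilon\cup\hat R_\epsilon$ (because the Dirichlet values there are $\le M$) and $\partial_{\bf n}u=\partial_{\bf n}\Phi=0$ on $\hat M_\epsilon$, the boundary terms from integration by parts drop out, giving
\begin{equation}\label{upperest}
\frac{1}{2}\frac{d}{dt}\int_{\Omega_\epsilon} w^2\,dV = -D_1\int_{\Omega_\epsilon}|\nabla w|^2\,dV - D_1\int_{\Omega_\epsilon} u\,\nabla\Phi\cdot\nabla w\,dV,
\end{equation}
where we used $\nabla u\cdot\nabla w = |\nabla w|^2$ and $\nabla w = \mathbf{1}_{\{u>M\}}\nabla u$ on the support of $w$. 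On the set $\{u>M\}$ we write $u = w + M$, so the drift integral becomes $-D_1\int (w+M)\nabla\Phi\cdot\nabla w$; the term with $M$ is $-D_1 M\int \nabla\Phi\cdot\nabla w = -D_1 M\int \nabla\cdot(w\nabla\Phi) + D_1 M\int w\,\Delta\Phi$, and the first piece is a boundary term that vanishes as before, while $\Delta\Phi = -\lambda(u - \alpha_2 c_2)\le -\lambda u + \lambda M \le 0$ on $\{u>M\}$ once we know $\alpha_2 c_2\le M$ — which forces me to run the two estimates for $c_1$ and $c_2$ simultaneously (or bootstrap: first get $\alpha_2 c_2\ge 0$ from the lower estimate, which needs no coupling, then close the upper estimates as a pair). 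The remaining term $-D_1\int w\nabla\Phi\cdot\nabla w = -\frac{D_1}{2}\int\nabla\Phi\cdot\nabla(w^2) = \frac{D_1}{2}\int w^2\Delta\Phi \le 0$ again by the sign of $\Delta\Phi$ on $\{u>M\}$. Hence $\frac{d}{dt}\int w^2\,dV \le 0$, and since $w(0)\equiv 0$ by the hypothesis $(c_1(0),c_2(0))\in\tilde\Sigma$, we get $w(t)\equiv 0$, i.e. $\alpha_1 c_1(t)\le M$ for all $t\ge 0$.

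For the lower bound $u\ge 0$, test against $v := u^- = \max\{-u,0\}\ge 0$, which also vanishes on $\hat L_\epsilon\cup\hat R_\epsilon$ (Dirichlet data are strictly positive) and satisfies $\partial_{\bf n}v=0$ on $\hat M_\epsilon$. The same integration by parts gives $\frac{1}{2}\frac{d}{dt}\int v^2\,dV = -D_1\int|\nabla v|^2 + D_1\int u\nabla\Phi\cdot\nabla v$ (sign flips because $\nabla v = -\mathbf{1}_{\{u<0\}}\nabla u$), and on $\{u<0\}$ we have $u = -v$, so the drift term is $-D_1\int v\nabla\Phi\cdot\nabla v = \frac{D_1}{2}\int v^2\Delta\Phi$; on $\{u<0\}$, $\Delta\Phi = -\lambda(u-\alpha_2 c_2) = \lambda v + \lambda\alpha_2 c_2$, which has no definite sign a priori, so here I would instead absorb it crudely: $\frac{D_1}{2}\int v^2\Delta\Phi \le \frac{D_1\lambda}{2}\int v^3 + \frac{D_1\lambda}{2}\int\alpha_2 c_2\, v^2 \le C(t)\int v^2$ using an $L^\infty$ or $L^3$ bound on $v$ and on $\alpha_2 c_2$ coming from the a priori well-posedness of the PNP system in $H^1$ (Sobolev embedding in dimension $3$), dropping the good $-D_1\int|\nabla v|^2$ term. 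Then Grönwall with $v(0)\equiv 0$ gives $v(t)\equiv 0$. I expect the main obstacle to be precisely this coupling/sign bookkeeping: making sure the Poisson equation is used in the right direction on each truncation set, handling the drift term $u\nabla\Phi$ which is the only non-variational piece, and confirming that all boundary contributions genuinely vanish — for which the homogeneous Neumann conditions \eqref{bv} on $\hat M_\epsilon$ (rather than the physical no-flux condition) are exactly what is needed. A cleaner alternative, which is likely what the cited works \cite{Gro1, GG2, Seid1} do, is to run all four one-sided estimates at once using the single test function $(\alpha_1 c_1 - M)^+ - (\alpha_1 c_1)^- + (\alpha_2 c_2 - M)^+ - (\alpha_2 c_2)^-$ against the appropriately weighted sum of the two concentration equations, so that the cross terms involving $\Delta\Phi = -\lambda(\alpha_1 c_1 - \alpha_2 c_2)$ combine with a favorable sign; I would present that as the streamlined version once the mechanism above is understood.
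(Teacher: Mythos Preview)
The paper does not actually prove Proposition~\ref{invariantregion}; it states the result and attributes it to the semiconductor literature \cite{Gro1, Gro2, GG2, Seid1}, remarking only that ``the PNP systems are basically the same as the van Roosbroeck models.'' So there is no in-paper argument to compare against.

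Your Stampacchia/De~Giorgi truncation approach is precisely the method used in those references, and the overall architecture of your proposal is correct: test against $(\alpha_k c_k - M)^+$ and $(\alpha_k c_k)^-$, kill the boundary terms using the homogeneous Neumann data on $\hat M_\epsilon$ and the compatibility $0\le \alpha_k l_k,\alpha_k r_k\le M$ on $\hat L_\epsilon\cup\hat R_\epsilon$, and handle the drift term via the Poisson equation. Two points deserve tightening. First, your claim that the lower estimate ``needs no coupling'' is not quite right: on $\{\alpha_1 c_1<0\}$ one has $\Delta\Phi=\lambda(\alpha_1 c_1)^- + \lambda\alpha_2 c_2$, which has no sign without information on $c_2$; you implicitly acknowledge this a few lines later when you fall back on Gr\"onwall. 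The clean way to close the argument is exactly the simultaneous test you mention at the end: summing the four one-sided estimates (with weights $1/D_1$, $1/D_2$) makes the cross terms from $\Delta\Phi=-\lambda(\alpha_1 c_1-\alpha_2 c_2)$ combine into expressions like $(w_1-w_2)^2(w_1+w_2)\ge 0$ and $(w_1-w_2)^2\ge 0$, and one gets $\frac{d}{dt}(\cdots)\le 0$ without any Gr\"onwall at all --- this is the argument in \cite{GG2}. Second, if you do go the Gr\"onwall route for the lower bounds, be careful with the phrase ``$L^\infty$ bound \dots\ from $H^1$ (Sobolev embedding in dimension~3)'': $H^1(\Omega_\epsilon)\not\hookrightarrow L^\infty$ in three dimensions. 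You only get $L^6$, which still suffices (interpolate $\int v^3$ against the good $-\int|\nabla v|^2$ term, then use the ODE comparison $y'\le Cy^3$, $y(0)=0\Rightarrow y\equiv 0$), but this should be said explicitly.
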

We remark that,  for   PNP systems with three or more types of ions, the above invariant principle is not available.
It    is not clear to us whether or not a similar principle  still holds in this case.
  PNP systems with more than  two types of ions are worth further studying.

The results in \cite{Gro1, Gro2, GG2, Seid1} show also  that the one-dimensional problem \eqref{genlimPNP}-\eqref{genlimBV}   has a positively invariant region
\begin{equation*}
  {\tilde{\Sigma}}_0 = \{ (c_1, c_2) \in H^1(0,1) \times H^1(0,1): \
0 \le \alpha_1 c_1 \le M, \
  0  \le \alpha_2 c_2 \le M \},
  \end{equation*}
  where $M$ is the constant   in Proposition~\ref{invariantregion}, and
  problem \eqref{genlimPNP}-\eqref{genlimBV} is  globally well-posed in ${\tilde{\Sigma}}_0$
 and has a global attractor ${\mathcal{A}}_0$
in ${\tilde{\Sigma}}_0 \cap H^1(0,1) \times H^1(0,1)$.

Our first result claims that the global attractors
 ${\mathcal{A}}_\epsilon$ of  the three-dimensional
 PNP systems are upper semi-continuous to the
 global attractor ${\mathcal{A}}_0$ of the
 one-dimensional limiting system as $\epsilon \to 0$, which partially
 justify the limiting process.

 \begin{thm}
 \label{uppersc}
  The global attractors ${\mathcal A}_{\epsilon}$ of the three-dimensional PNP systems are
upper semi-continuous at $\epsilon=0$,  that is, for any $\eta>0$, there  exists  a
positive number $\epsilon_1=\epsilon_1(\eta)$ such that
 for all
$0<\epsilon\le\epsilon_1$ and all $w\in {\mathcal A}_{\epsilon}$,
\[{\rm dist}_{X_{\epsilon}} \left(w,  {\mathcal  A}_0 \right )\le \eta,\]
 where  $ {X_{\epsilon}} = \left\{   w: \   \| w \|^2_{X_\epsilon} =  \|w\|_{L^2}^2 + \| \nabla w \|_{L^2}^2 +\frac{1}{\epsilon^2}\|w_y\|^2_{L^2}
  +\frac{1}{\epsilon^2}\|w_z\|^2_{L^2} < \infty  \right\}$.
   \end{thm}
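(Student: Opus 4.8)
The plan is to follow the Hale--Raugel approach to dissipative PDE on thin domains \cite{HR1,HR2}: work on the fixed domain $\Omega=[0,1]\times\bbD$ via \eqref{chvar} and establish three facts, from which the conclusion follows by a contradiction argument. Namely: (i) a bound for $\bigcup_{0<\epsilon\le\epsilon_0}\mathcal A_\epsilon$ in the $X_\epsilon$-norm that is uniform in $\epsilon$; (ii) the cross-sectional fluctuations of elements of $\mathcal A_\epsilon$ tend to $0$ in the $X_\epsilon$-norm as $\epsilon\to0$; and (iii) after averaging over cross-sections, every limit of complete orbits contained in $\mathcal A_\epsilon$ is a complete bounded orbit of the one-dimensional limiting semiflow \eqref{genlimPNP}--\eqref{genlimBV}, hence lies in $\mathcal A_0$. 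Granting these: if the theorem failed there would be $\eta_0>0$, $\epsilon_n\to0$ and $w_n\in\mathcal A_{\epsilon_n}$ with $\mathrm{dist}_{X_{\epsilon_n}}(w_n,\mathcal A_0)>\eta_0$; since on $(y,z)$-independent functions $\|\cdot\|_{X_{\epsilon_n}}$ is a fixed multiple of $\|\cdot\|_{H^1(0,1)}$, extracting from $w_n$ the convergent subsequence produced by (i)--(iii) gives a contradiction.

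For (i) I would start from Proposition~\ref{invariantregion}, which gives $0\le\alpha_kc_k\le M$ on $\mathcal A_\epsilon$ with $M$ independent of $\epsilon$, and then run standard parabolic energy estimates on \eqref{newPNP} (testing the Nernst--Planck equations with $c_k$ and with the associated second-order operator applied to $c_k$, using the $L^\infty$-bound to absorb the drift terms $\alpha_kc_kg^2JJ^\tau\nabla\Phi$, together with elliptic estimates for $\Phi$, and the fact that $\mathcal A_\epsilon$ consists of bounded complete orbits). This should yield $\|w\|_{X_\epsilon}\le C_0$ on $\mathcal A_\epsilon$ for all small $\epsilon$, with $C_0$ independent of $\epsilon$. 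The estimates are uniform because the cross-sectional block of the operator $g^{-2}\nabla\cdot(g^2JJ^\tau\nabla\,\cdot\,)$ carries a coefficient of order $g^{-2}\sim\epsilon^{-2}$ and is dissipative (so it is exploited, not fought), while the off-diagonal entries of $g^2JJ^\tau$ are $O(g)=O(\epsilon)$ and the correction of its $(y,z)$-block to the identity is $O(g_x^2)=O(\epsilon^2)$; the hypotheses \eqref{gC} and $g_x=0$ at $x=0,1$ enter here and in the boundary manipulations.

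Ingredient (ii) is the crux. Writing $Pu(x)=|\bbD|^{-1}\int_{\bbD}u(x,\cdot)\,dy\,dz$, $Q=I-P$, $c_k=Pc_k+c_k^\perp$, $\Phi=P\Phi+\Phi^\perp$, I would apply $Q$ to \eqref{newPNP}. Because of the conormal conditions on $M$ in \eqref{newBV}, and because every term of \eqref{newPNP} with an $O(1)$ coefficient either depends on $x$ alone (hence is killed by $Q$) or is already proportional to a transversal quantity, the interior equation for $(c_k^\perp,\Phi^\perp)$ becomes linear and homogeneous in $(c_k^\perp,\Phi^\perp)$ — with the strongly dissipative leading term $D_kg^{-2}\Delta_{y,z}c_k^\perp$ — forced only through terms of size $O(\epsilon^2)$ arising from the failure of a purely $x$-dependent profile to meet the transformed conormal condition on $M$ (that failure is $O(gg_x)=O(\epsilon^2)$ in flux). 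On the range of $Q$ the operator $-g^{-2}\Delta_{y,z}$ has spectral gap of order $\epsilon^{-2}$ relative to the natural weighted norm (Poincar\'e on $\bbD$), so a Gronwall argument along the complete orbits of $\mathcal A_\epsilon$ should give $\sup_t\|(c_k^\perp,\Phi^\perp)(t)\|=O(\epsilon^2)$ in $L^2(\Omega)$ and, via the associated energy identities, the same order for $\|\nabla_{y,z}c_k^\perp\|$, $\|\partial_x\nabla_{y,z}c_k^\perp\|$ and the analogous quantities for $\Phi^\perp$. Then $\epsilon^{-2}(\|\partial_yc_k^\perp\|^2+\|\partial_zc_k^\perp\|^2)=O(\epsilon^2)\to0$ — precisely the weighted part of $\|\cdot\|_{X_\epsilon}$ — and together with a Poincar\'e bound on $\|c_k^\perp\|_{H^1}$ this yields $\|c_k^\perp\|_{X_\epsilon}+\|\Phi^\perp\|_{X_\epsilon}\to0$ uniformly over $\mathcal A_\epsilon$. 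I expect this to be the main obstacle: one must balance the various powers of $\epsilon$ hidden in $g^2JJ^\tau$ and in the conormal data against the $\epsilon^{-2}$ transversal dissipation, and obtain the smallness \emph{uniformly over the whole attractor} rather than merely in a time-averaged sense.

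For (iii), fix $\epsilon_n\to0$ and $w_n\in\mathcal A_{\epsilon_n}$, extended to complete orbits bounded in $X_{\epsilon_n}$ by (i). The cross-sectional averages $\bar w_n=(Pc_{1,n},Pc_{2,n},P\Phi_n)$ are bounded in $C(\bbR;H^1(0,1)^3)$ with equicontinuous time dependence (the weak form of \eqref{newPNP} bounds $\partial_t\bar w_n$ in $L^2_{loc}(\bbR;H^{-1}(0,1)^3)$), so a subsequence converges locally uniformly in $t$ to some $\bar w(\cdot)$. Averaging \eqref{newPNP} over cross-sections and letting $\epsilon_n\to0$ — using $g(x,\epsilon)=\epsilon g_0(x)+O(\epsilon^2)$, so $g^{-2}\partial_x(g^2\partial_x\,\cdot\,)\to g_0^{-2}\partial_x(g_0^2\partial_x\,\cdot\,)$ in $C^1[0,1]$, and using (ii) to discard every residual boundary and coupling term carrying a transversal factor — shows $\bar w(\cdot)$ is a complete bounded orbit of \eqref{genlimPNP}--\eqref{genlimBV} lying in $\tilde{\Sigma}_0$ (the bound $0\le\alpha_kPc_{k,n}\le M$ persists); hence $\bar w(0)\in\mathcal A_0$. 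Then
\[\mathrm{dist}_{X_{\epsilon_n}}(w_n,\mathcal A_0)\le\|w_n-\bar w_n(0)\|_{X_{\epsilon_n}}+\|\bar w_n(0)-\bar w(0)\|_{X_{\epsilon_n}}\longrightarrow 0,\]
the first term by (ii) and the second because $\bar w_n(0)-\bar w(0)$ is $(y,z)$-independent, contradicting $\mathrm{dist}_{X_{\epsilon_n}}(w_n,\mathcal A_0)>\eta_0$ and proving the theorem.
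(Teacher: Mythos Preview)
Your plan follows the Hale--Raugel template but takes a genuinely different route from the paper: you argue by compactness and contradiction, whereas the paper proves a \emph{direct, quantitative} comparison between the two semiflows. Concretely, the paper shows (its Lemmas~\ref{lemma42}--\ref{lemma44}, built on the uniform estimates of Section~3.2) that for initial data with $\|(u_0,v_0)\|_{X_\epsilon}\le R$ one has
\[
\|T^\epsilon(t)(u_0,v_0)-T^0(t)(Mu_0,Mv_0)\|_{X_\epsilon}^2\le\sqrt{\epsilon}\,K(R)e^{K(R)t},\qquad t\ge1.
\]
The theorem then follows in two lines: pick $\tau_0$ so large that $T^0(\tau_0)$ carries the uniformly bounded set $\{(Mu_0,Mv_0):(u_0,v_0)\in\mathcal A_\epsilon\}$ into the $\eta/2$-neighborhood of $\mathcal A_0$, add the $\epsilon^{1/4}Ke^{K\tau_0}$ comparison error, and use the invariance $T^\epsilon(\tau_0)\mathcal A_\epsilon=\mathcal A_\epsilon$. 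No subsequences and no limits of complete orbits are needed.

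Your step~(iii) has a genuine gap in the topology. The compactness input you cite --- $\bar w_n$ bounded in $L^\infty(\bbR;H^1(0,1))$ with $\partial_t\bar w_n\in L^2_{loc}(\bbR;H^{-1}(0,1))$ --- gives, via Aubin--Lions, convergence of a subsequence only in $C_{loc}(\bbR;L^2(0,1))$, \emph{not} in $C_{loc}(\bbR;H^1(0,1))$. But for $(y,z)$-independent functions $\|\cdot\|_{X_{\epsilon_n}}$ is uniformly equivalent to $\|\cdot\|_{H^1(0,1)}$, so the second term $\|\bar w_n(0)-\bar w(0)\|_{X_{\epsilon_n}}$ in your last display does not tend to $0$ from what you have established. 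Closing this gap would require uniform bounds on $\mathcal A_\epsilon$ in a space compactly embedded in $H^1$ (e.g., control of $\|\mathcal L_\epsilon c_k\|_{H_\epsilon}$ along complete orbits), and once you have such estimates you are essentially at the paper's Lemma~\ref{lemma43} and might as well run the direct Gronwall comparison. A secondary concern is that your sketch of~(ii) understates the work: the $Q$-projection of the drift term $c_k\nabla\Phi$ produces couplings such as $Q(c_k^\perp\,\partial_x(P\Phi))$ and $Q((Pc_k)\,\nabla\Phi^\perp)$ whose smallness in the required $X_\epsilon$-norm does not come from the equation structure alone but from exactly the hierarchy of weighted energy/time-derivative estimates (the paper's Lemmas~\ref{lemma31}--\ref{lemma36}) that feed the comparison argument.
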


\subsection{Steady-state problem of the one-dimensional  limiting PNP system}
 The steady-state of problem~\eqref{genlimPNP} and~\eqref{genlimBV} can be rewritten as
\begin{align}\label{SSPNP}\begin{split}
\mu^2   \frac{d}{dx}\left(h(x)\frac{d\phi}{dx}\right)
  = -h(x) &(\alpha c_1-\beta c_2), \quad
    \frac{d J_1}{dx} =0,\quad \frac{d J_2}{dx} =0,\\
  h(x) \frac{d c_1}{dx}+\alpha c_1 h(x) \frac{d \phi}{dx}  = &
- J_1 ,\quad
  h(x)  \frac{d c_2}{dx} -\beta c_2 h(x) \frac{d \phi}{dx} = -  J_2
\end{split}
\end{align}
where $\mu^2=1/{\lambda}$,  $J_1=\bar{J}_1/{D_1}$,  $J_2=\bar{J}_2/{D_2}$ and $h(x)=g_0^2(x)$,
and the boundary conditions are
\begin{align}\label{SSBV}\begin{split}
\phi(0)=\phi_0,  & \quad c_1(0)=l_1,\quad c_2(0)=l_2,\\
\phi(1)=0,  & \quad c_1(1)=r_1, \quad   c_2(1)=r_2.
\end{split}
\end{align}
Since $\lambda$ is large, we can treat the problem~\eqref{SSPNP} and~\eqref{SSBV} as a singularly perturbed problem with $\mu$ as the singular parameter. We will recast the singularly perturbed PNP system into a system of first
order equations.

Denote derivatives with respect to $x$ by overdot   and
introduce
\[  \tau=x,\quad u =\mu h(\tau) \dot \phi ,\quad v=-h(\tau) (\alpha_1 c_1-\alpha_2 c_2),\;
\mbox{ and }\; w=\alpha_1^2 c_1+\alpha_2^2 c_2.\]
  System~\eqref{SSPNP} becomes
 \begin{align}\label{slow}\begin{split}
  \mu\dot \phi=& \frac{1}{h(\tau)}u,\quad
  \mu \dot u=  v, \quad
  \mu \dot v=uw +\mu\frac{h_{\tau}(\tau)}{h(\tau)}v+\mu(\alpha_1 J_1-\alpha_2 J_2),\\
  \mu \dot w=&\frac{\alpha_1\alpha_2}{h^2(\tau)} uv+\frac{\alpha_2-\alpha_1}{h(\tau)}uw
     -\frac{\mu}{h(\tau)}(\alpha_1^2 J_1+\alpha_2^2 J_2),\\
\dot J_1=& 0, \quad  \dot J_2=0,\quad \dot \tau=1.
 \end{split}
 \end{align}
 System~\eqref{slow} -- {\em the slow system} -- will be treated as a dynamical system   with the
 phase space
 \[\bbR^7=\{(\phi,u,v,w,J_1,J_2,\tau)\}\]
  and the independent variable $x$ will be viewed  as time. The boundary condition~\eqref{SSBV}  becomes
\begin{align}\label{scaledbv}\begin{split}
\phi(0)=&\phi_0,   \;  v(0)=-h(0)(\alpha_1 l_1-\alpha_2 l_2),
     \; w(0)=\alpha^2 L_1+\beta^2 L_2, \; \tau(0)=0,\\
\phi(1)=&0,  \;  v(1)=-h(1)(\alpha_1 r_1-\alpha_2 r_2),
    \;  w(1)=\alpha_1^2 r_1+\alpha_2^2 r_2, \; \tau(1)=1.
\end{split}
\end{align}

Setting $\mu=0$ in system~\eqref{slow}, we get the limiting slow system
\begin{align}\label{limslow}\begin{split}
  0=& \frac{1}{h(\tau)}u,\quad
  0=  v, \quad
   0=uw,\\
  0=&\frac{\alpha_1\alpha_2}{h^2(\tau)} uv+\frac{\alpha_2-\alpha_1}{h(\tau)}uw,\\
\dot J_1=& 0, \quad  \dot J_2=0,\quad \dot \tau=1.
 \end{split}
 \end{align}
The set ${\mathcal Z}_0=\{u=v=0\}$ is called {\em the slow manifold} which
supports the regular layer of the boundary value problem.  The regular layer
will not satisfy all conditions in~\eqref{scaledbv} if
 $\alpha_2 l_2-\alpha_1 l_1\neq 0$ or $\alpha_2 r_2-\alpha_1 r_1\neq 0$,
and this defect has to be remedied by boundary layers.
The boundary layer behavior will be determined by the
  fast system resulting from the slow system~\eqref{slow} by  the rescaling of time $x=\mu \xi$.
Thus, in terms of $\xi$,   {\em the fast system} of~\eqref{slow} is
 \begin{align}\label{fast}\begin{split}
  \phi'=& \frac{1}{h(\tau)} u,\quad u'=  v,\quad
   v'=uw+\mu\frac{h_{\tau}(\tau)}{h(\tau)}v+\mu(\alpha_1 J_1-\alpha_2 J_2),\\
    w'=&\frac{\alpha_1\alpha_2}{h^2(\tau)} uv+\frac{\alpha_2-\alpha_1}{h(\tau)}uw
     -\frac{\mu}{h(\tau)}(\alpha_1^2 J_1+\alpha_2^2 J_2),\\
 J_1'=& 0, \quad   J_2'=0,\quad  \tau'=\mu.
 \end{split}
 \end{align}
where   prime denotes the derivative with respect to the variable
$\xi$. The limiting fast system at  $\mu= 0$ is
\begin{align}\label{limfast}\begin{split}
  \phi'=& \frac{1}{h(\tau)} u,\quad
   u'=  v,\quad
   v'=uw, \quad
   w'=\frac{\alpha_1\alpha_2}{h^2(\tau)} uv+\frac{\alpha_2-\alpha_1}{h(\tau)}uw, \\
   J_1'=&0, \quad J_2'=0,\quad \tau'= 0.
 \end{split}
 \end{align}
The slow manifold ${\mathcal Z}_0$ is precisely the
set of equilibria of~\eqref{limfast}.

Concerning the steady-state problem of the one-dimensional limiting PNP system, we have
\begin{thm}\label{main} Assume that $\alpha_1 l_1\neq   \alpha_2 l_2$
and $\alpha_1 r_1\neq   \alpha_2 r_2$ (otherwise, see Remark \ref{nolayer}).
For $\mu>0$ small, the boundary value problem \eqref{slow} and \eqref{scaledbv} has a unique solution near a singular orbit. The
singular orbit is the union of  two  fast orbits of system~\eqref{limfast} representing the boundary layers and one slow orbit of a blow-up system of~\eqref{limslow} (see Section 4 for details) for the regular layer. The limiting   flux densities are explicitly given by
\begin{align*}
\bar{J}_1=&D_1J_1=\frac{D_1\left(\ln\frac{r_1}{l_1}-\alpha_1 \phi_0\right)
\left((\alpha_1 l_1)^{\frac{\alpha_2}{\alpha_1+\alpha_2}}
 (\alpha_2l_2)^{\frac{\alpha_1}{\alpha_1+  \alpha_2}}
 - (\alpha_1 r_1)^{\frac{  \alpha_2}{\alpha_1+  \alpha_2}}
 (  \alpha_2 r_2)^{\frac{\alpha_1}{\alpha_1+  \alpha_2}}\right)}
 {\left(\frac{\alpha_1 \alpha_2 }{\alpha_1+  \alpha_2}\ln\frac{r_1}{l_1}+
\frac{\alpha_1^2}{\alpha_1+  \alpha_2}\ln\frac{r_2}{l_2}\right)\int_0^1h^{-1}(x)\,dx},\\
\bar{J}_2=&D_2J_2=\frac{D_2\left(\ln\frac{r_2}{l_2}+  \alpha_2\phi_0\right)
\left((\alpha_1 l_1)^{\frac{  \alpha_2}{\alpha_1+  \alpha_2}}
 (  \alpha_2 l_2)^{\frac{\alpha_1}{\alpha_1+  \alpha_2}}
 - (\alpha_1 r_1)^{\frac{ \alpha_2}{\alpha_1+  \alpha_2}}
 (  \alpha_2 r_2)^{\frac{\alpha_1}{\alpha_1+  \alpha_2}}\right)}
{\left(\frac{  \alpha_2^2}{\alpha_1+  \alpha_2}\ln\frac{r_1}{l_1}+
\frac{\alpha_1 \alpha_2 }{\alpha_1+  \alpha_2}\ln\frac{r_2}{l_2}\right)\int_0^1h^{-1}(x)\,dx}.
\end{align*}
 \end{thm}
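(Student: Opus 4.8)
\medskip
\noindent\emph{Proof strategy.} The plan is to construct a \emph{singular orbit} $\Gamma_0=\Gamma_0^L\cup\Gamma_0^r\cup\Gamma_0^R$, where $\Gamma_0^L$ and $\Gamma_0^R$ are fast orbits of the limiting fast system \eqref{limfast} in the slices $\{\tau=0\}$ and $\{\tau=1\}$ (the two boundary layers) and $\Gamma_0^r$ is a slow orbit on a blow-up of the slow manifold ${\mathcal Z}_0$ (the regular layer), joining the boundary set $B_L$ cut out of phase space by \eqref{scaledbv} at $x=0$ to the boundary set $B_R$ cut out at $x=1$; and then to invoke Fenichel's invariant manifold theorems together with the Exchange Lemma of geometric singular perturbation theory, in the manner of \cite{Liu1}, to show that for $\mu>0$ small there is a unique orbit of \eqref{slow} lying $O(\mu)$-close to $\Gamma_0$ that meets $B_L$ at $x=0$ and $B_R$ at $x=1$. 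The explicit flux densities are then read off from the connection conditions defining $\Gamma_0$.

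First I would study \eqref{limfast} with $\tau$, hence $h(\tau)=g_0^2(\tau)>0$, frozen. Reverting to the variables $(\phi,c_1,c_2)$, one sees that along fast orbits $c_1e^{\alpha_1\phi}$ and $c_2e^{-\alpha_2\phi}$ are constant, so $c_1,c_2$ become explicit exponential functions of $\phi$ and, via an additional energy-type first integral, the flow in each slice reduces to motion on curves; in particular, the orbit connecting $B_L$ to ${\mathcal Z}_0=\{u=v=0\}$ is determined uniquely by these integrals. The linearization of \eqref{limfast} at a point of ${\mathcal Z}_0$ has eigenvalues $\pm\sqrt{w}$, with $w=\alpha_1^2c_1+\alpha_2^2c_2>0$ on the physically relevant region supplied by Proposition~\ref{invariantregion}, together with a double eigenvalue $0$ whose eigenspace is tangent to ${\mathcal Z}_0$; hence ${\mathcal Z}_0$ is normally hyperbolic with one-dimensional stable and unstable fast fibers. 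I would then show that $B_L\subset\{\tau=0\}$ meets $W^s({\mathcal Z}_0)$ transversally in a single orbit $\Gamma_0^L$, whose landing point $\omega_L\in{\mathcal Z}_0$ has $\phi$-coordinate $\phi_0^*=\phi_0-\tfrac1{\alpha_1+\alpha_2}\ln\tfrac{\alpha_2l_2}{\alpha_1l_1}$ and, by the electroneutrality $\alpha_1c_1=\alpha_2c_2$ valid on ${\mathcal Z}_0$, concentration level $\rho_L:=(\alpha_1l_1)^{\alpha_2/(\alpha_1+\alpha_2)}(\alpha_2l_2)^{\alpha_1/(\alpha_1+\alpha_2)}$; symmetrically, there is a unique right fast orbit $\Gamma_0^R\subset W^u({\mathcal Z}_0)$ in $\{\tau=1\}$ meeting $B_R$, with take-off point $\omega_R$ at $\phi_1^*=\tfrac1{\alpha_1+\alpha_2}\ln\tfrac{\alpha_1r_1}{\alpha_2r_2}$ and level $\rho_R:=(\alpha_1r_1)^{\alpha_2/(\alpha_1+\alpha_2)}(\alpha_2r_2)^{\alpha_1/(\alpha_1+\alpha_2)}$.

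For the regular layer, on ${\mathcal Z}_0$ one has $\alpha_1c_1=\alpha_2c_2=:\rho$ and $w=(\alpha_1+\alpha_2)\rho$, but setting $\mu=0$ in \eqref{slow} leaves no equation for $\phi$ or $w$; I would therefore blow up \eqref{limslow} (rescaling $u,v$ by suitable powers of $\mu$) to recover the reduced slow flow, which, from the last two equations of \eqref{SSPNP} restricted to electroneutrality, is
\[\dot\rho=-\frac{\alpha_1\alpha_2(J_1+J_2)}{(\alpha_1+\alpha_2)h(\tau)},\qquad\dot\phi=\frac{\alpha_2J_2-\alpha_1J_1}{(\alpha_1+\alpha_2)\rho\,h(\tau)},\qquad\dot\tau=1,\qquad\dot J_1=\dot J_2=0.\]
Requiring an orbit $\Gamma_0^r$ of this flow to run from $\omega_L$ at $\tau=0$ to $\omega_R$ at $\tau=1$, and integrating first in $\tau$ and then with $\rho$ as independent variable, gives the connection equations
\[\rho_R-\rho_L=-\frac{\alpha_1\alpha_2(J_1+J_2)}{\alpha_1+\alpha_2}\int_0^1h^{-1}(x)\,dx,\qquad \phi_1^*-\phi_0^*=\frac{\alpha_1J_1-\alpha_2J_2}{\alpha_1\alpha_2(J_1+J_2)}\ln\frac{\rho_R}{\rho_L},\]
a linear system in $(J_1,J_2)$ with a unique solution. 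Substituting the values of $\rho_L,\rho_R,\phi_0^*,\phi_1^*$ and simplifying (using $\alpha_2(J_1+J_2)+(\alpha_1J_1-\alpha_2J_2)=(\alpha_1+\alpha_2)J_1$, and similarly for $J_2$) yields exactly the stated formulas for $\bar J_1=D_1J_1$ and $\bar J_2=D_2J_2$, and shows $\Gamma_0$ is unique.

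Finally I would use Fenichel's theorems to obtain, for $\mu>0$ small, a slow manifold ${\mathcal Z}_\mu$ and manifolds $W^s({\mathcal Z}_\mu),W^u({\mathcal Z}_\mu)$ close to their $\mu=0$ counterparts, and apply the Exchange Lemma along $\Gamma_0$ to track the forward image under the flow of \eqref{slow} of the initial data lying in $B_L$ as it traverses the left layer, slides along ${\mathcal Z}_\mu$, and exits through the right layer; a dimension count---$B_L$ and $B_R$ each impose three conditions on the six free parameters $(\phi,u,v,w,J_1,J_2)$, since $\tau(1)=1$ is automatic---together with the transversality of $B_L\pitchfork W^s({\mathcal Z}_0)$ and $B_R\pitchfork W^u({\mathcal Z}_0)$ established above shows this image meets $B_R$ transversally in exactly one point, the desired solution, whose flux densities converge to those computed from $\Gamma_0$. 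The step I expect to be the main obstacle is the regular layer: because the reduced dynamics is invisible in the coordinates of \eqref{limslow}, one must carry out the correct blow-up, verify the reduced flow above, and---most importantly---check that the unique $(J_1,J_2)$ solving the connection equations keeps $\Gamma_0$ inside the region where $\rho>0$ and $w>0$, so that normal hyperbolicity and the hypotheses of the Exchange Lemma hold along the whole singular orbit; granting this and the two transversality statements, the derivation of the explicit flux formulas is routine algebra.
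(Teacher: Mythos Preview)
Your proposal is correct and follows essentially the same route as the paper: the paper likewise finds the fast first integrals (its $H_2,H_3$ are your $c_1e^{\alpha_1\phi}$, $c_2e^{-\alpha_2\phi}$ in disguise) to land on $\omega_L,\omega_R$ with exactly your $\phi_0^*,\phi_1^*,\rho_L,\rho_R$, performs the blow-up $u=\mu p,\ v=\mu q$ to obtain the reduced slow flow (your equations for $\dot\rho,\dot\phi$ are the paper's $\dot w,\dot\phi$ under $w=(\alpha_1+\alpha_2)\rho$), solves the same two connection equations for $(J_1,J_2)$, verifies the transversalities, and closes with the Exchange Lemma. Your flagged obstacle---positivity of $\rho$ along $\Gamma_0^r$---is in fact automatic since $\rho$ is monotone in $x$ between the positive endpoints $\rho_L,\rho_R$; one small slip is calling the connection equations ``linear'' (the second is rational in $J_1,J_2$), but they decouple into $J_1+J_2$ and $\alpha_1J_1-\alpha_2J_2$ just as you use them.
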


\begin{rem}
Note that the factor $\int_0^1h^{-1}(x)\,dx$ on the denominators in the expressions for the flux densities $J_1$ and $J_2$ reflects the effect of the geometry of the three-dimensional channel.   Let's compare this effect with that of a cylindrical channel where the wall is defined by  $\{Y^2+Z^2= \epsilon\}$.  In this case, the corresponding integral factor on  the denominators   for the flux densities $J_1$ and $J_2$ is $1$.  The volume of the channel is $\pi \epsilon^2$. For general channels that we considered here, we thus assume
\[\mbox{Vol}=\int_0^1\pi g^2(x;\epsilon)\,dx=\pi \epsilon^2.\]
From which we have $\int_0^1 h(x)\,dx=1$. Therefore,
\[1=\left(\int_0^1h^{-1/2}(x)h^{1/2}(x)\,dx\right)^2\le \int_0^1h^{-1}(x)\,dx\int_0^1h(x)\,dx=
 \int_0^1h^{-1}(x)\,dx.\]
The inequality indicates  that {\em the more complicated the geometry of the channel, the smaller the flux for the ion flow}, which    agrees with  our  common  sense.
\end{rem}

\section{Upper semi-continuity of attractors}
\setcounter{equation}{0}
\subsection{Homogenization of  boundary conditions}
 In this section,  we  convert the non-homogeneous   Dirichlet boundary conditions on $L\cup R$ in \eqref{newBV} to   homogeneous ones, while  keeping  the homogeneous Neumann boundary conditions on $M$. For this purpose, the following technical result is needed.

 \begin{lem}\label{foliation} Let $h: [0,1]\to \bbR$ be a smooth function. Then, for any $\epsilon>0$,
there is a function $H^{\epsilon}:\Omega_{\epsilon}\to \bbR$ such that $H^{\epsilon}(X,0,0)=h(X)$,
$H^{\epsilon}(0,Y,Z)=h(0)$,   $H^{\epsilon}(1,Y,Z)=h(1)$, and $\langle \nabla H^{\epsilon}(X,Y,Z), {\bf n}\rangle=0$ for $(X,Y,Z)\in \hat{M}_{\epsilon}$.
\end{lem}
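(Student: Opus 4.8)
The plan is to exhibit $H^\epsilon$ by an explicit formula rather than to build a foliation abstractly, though geometrically the construction does amount to foliating $\Omega_\epsilon$ by surfaces of revolution about the $X$--axis that meet the wall $\hat M_\epsilon$ orthogonally, with the two end caps $\hat L_\epsilon$, $\hat R_\epsilon$ among the leaves (these caps are automatically orthogonal to $\hat M_\epsilon$ at $X=0,1$ because there $\partial g/\partial X=0$ forces the outward normal of $\hat M_\epsilon$ to be radial). Using the rotational symmetry, I would look for $H^\epsilon$ depending on $(Y,Z)$ only through
\[ \sigma:=\frac{Y^2+Z^2}{g^2(X,\epsilon)}\in[0,1], \]
a smooth function on $\overline{\Omega_\epsilon}$ (recall $g(\cdot,\epsilon)>0$ on $[0,1]$) that vanishes on the axis and equals $1$ exactly on $\hat M_\epsilon$; using $\sigma$ in place of the radial coordinate $R=\sqrt{Y^2+Z^2}$ is what keeps $H^\epsilon$ smooth across the axis. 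The ansatz I would try is
\[ H^\epsilon(X,Y,Z)=h(X)+\chi(\sigma)\,\theta(X)\,h'(X), \]
where $\chi:[0,1]\to\bbR$ is a fixed smooth function with $\chi(0)=\chi(1)=0$ and $\chi'(1)=1$ (for instance $\chi(s)=s^2-s$), and $\theta:[0,1]\to\bbR$ is smooth and to be determined.

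With this ansatz the three prescribed values come for free: $\chi(0)=0$ gives $H^\epsilon(X,0,0)=h(X)$, and as soon as $\theta(0)=\theta(1)=0$ one gets $H^\epsilon(0,Y,Z)=h(0)$ and $H^\epsilon(1,Y,Z)=h(1)$. The substance of the proof is therefore the non--flux condition on $\hat M_\epsilon$. I would first record the elementary fact that for any rotationally symmetric $F=F(X,\sqrt{Y^2+Z^2})$ the outward normal of $\hat M_\epsilon=\{Y^2+Z^2=g^2(X,\epsilon)\}$ is proportional to $(-g\,g_X,\,Y,\,Z)$ (with $g_X=\partial g/\partial X$), so that $\langle\nabla F,{\bf n}\rangle=0$ on $\hat M_\epsilon$ is equivalent to $F_R=g_X\,F_X$ there. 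Substituting the ansatz, differentiating, and evaluating on $\{\sigma=1\}$ using $\chi(1)=0$, $\chi'(1)=1$, $\partial_R\sigma|_{\sigma=1}=2/g$ and $\partial_X\sigma|_{\sigma=1}=-2g_X/g$, I expect this to collapse to the pointwise identity
\[ \frac{2\bigl(1+g_X^2(X,\epsilon)\bigr)}{g(X,\epsilon)}\,\theta(X)\,h'(X)=g_X(X,\epsilon)\,h'(X),\qquad X\in[0,1], \]
which is satisfied for every $h$ once we set
\[ \theta(X):=\frac{g(X,\epsilon)\,g_X(X,\epsilon)}{2\bigl(1+g_X^2(X,\epsilon)\bigr)}. \]

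The final step is to check that this $\theta$ meets the two remaining requirements. It is smooth on $[0,1]$ (inheriting the regularity of $g$), and --- this is the single place the standing hypothesis $\partial g/\partial X(0,\epsilon)=\partial g/\partial X(1,\epsilon)=0$ enters --- it satisfies $\theta(0)=\theta(1)=0$, so the correction term vanishes on $\hat L_\epsilon$ and $\hat R_\epsilon$ and the construction is consistent at the corners $\hat M_\epsilon\cap\hat L_\epsilon$, $\hat M_\epsilon\cap\hat R_\epsilon$; without this hypothesis the scheme would break down precisely at those corners. Since $H^\epsilon$ is assembled from the smooth objects $h,h',\theta,\sigma$, it is as regular as the data permit on $\overline{\Omega_\epsilon}$, which completes the proof. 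I do not foresee a genuine obstacle: the only non--mechanical steps are choosing $\sigma$ rather than the radial variable (to preserve smoothness on the axis) and the normalization $\chi(1)=0,\ \chi'(1)=1$, which is exactly what reduces the Neumann condition to the transparent prescription for $\theta$ above; everything else is a routine chain--rule computation.
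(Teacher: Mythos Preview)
Your construction is correct: the explicit ansatz $H^\epsilon(X,Y,Z)=h(X)+\chi(\sigma)\,\theta(X)\,h'(X)$ with $\sigma=(Y^2+Z^2)/g^2$, $\chi(s)=s^2-s$, and $\theta=g g_X/\bigl(2(1+g_X^2)\bigr)$ does satisfy all four requirements, and the chain--rule computation reducing the Neumann condition to the formula for $\theta$ is right.

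The paper takes a different, more geometric route. It builds the level sets of $H^\epsilon$ directly by solving the auxiliary ODE $dX/dt=-t\,g_X/g$: each solution $X=\psi^\epsilon(t,X_0)$, revolved about the $X$--axis, is a leaf, and one sets $H^\epsilon\equiv h(X_0)$ on that leaf. The ODE is engineered so that at the moment a leaf meets $\hat M_\epsilon$ its tangent vector is $(-g_X,1,0)$, i.e.\ orthogonal to $\mathbf n$; the hypothesis $g_X(0,\epsilon)=g_X(1,\epsilon)=0$ enters to make $X=0$ and $X=1$ fixed points of the ODE, forcing the two end caps to be leaves. Your approach is more elementary---no ODE theory, just an explicit polynomial correction---and has the bonus that the correction term is visibly $O(\epsilon)$ (since $\theta=O(g)=O(\epsilon)$), which is convenient for the downstream uniform bounds in~\eqref{unibdL}. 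The paper's approach, on the other hand, yields an $H^\epsilon$ whose range is exactly the range of $h$ (being a reparametrization $h\circ X_0$), and whose level sets have a clean intrinsic description; it also generalizes more readily if one wanted to match higher--order normal derivatives on $\hat M_\epsilon$.
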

\begin{proof} We provide a specific construction of a function $H^{\epsilon}$.
For convenience, hereafter, we denote by $g'(X, \epsilon) = {\frac {\partial g}{\partial X}}(X, \epsilon)$.
For any $\epsilon>0$ and $X_0\in [0,1]$, let $X=\psi^{\epsilon}(t,X_0)$ be the solution of
\begin{equation}\label{ode}
\frac{dX}{dt}=-t \frac{g'(X,\epsilon)}{g(X,\epsilon)}
\end{equation}
 with $\psi^{\epsilon}(0,X_0)=X_0$. It is easy to see that $\psi^{\epsilon}(t,X_0)$ is even in $t$ from the equation. Since $g'(0,\epsilon)=g'(1,\epsilon)=0$, $\psi^{\epsilon}(t,0)=0$ and $\psi^{\epsilon}(t,1)=1$ for all $t$. Therefore,
 for any $(X,t)\in [0,1]\times [0,g(X,\epsilon)]$, there is a unique $X_0\in [0,1]$ such that
 $X=\psi^{\epsilon}(t,X_0)$, and hence, for any $(X,Y,Z)\in \Omega_{\epsilon}$, there is a unique $X_0\in [0,1]$ such that $X=\psi^{\epsilon}(\sqrt{Y^2+Z^2},X_0)$.  Set
  $H^{\epsilon}(X,Y,Z)=h(X_0)$ if $X=\psi^{\epsilon}(\sqrt{Y^2+Z^2},X_0)$. Then,
  $H^{\epsilon}(X,0,0)=h(X)$, $H^{\epsilon}(0,Y,Z)=h(0)$ and  $H^{\epsilon}(1,Y,Z)=h(1)$. It remains to show that, for $(X,Y,Z)\in \hat{M}_{\epsilon}$, $\langle \nabla H^{\epsilon}(X,Y,Z), {\bf n}\rangle=0$.
 For any $X_0\in [0,1]$, the set
 \[D(X_0)=\{(X,Y,Z): X=\psi^{\epsilon}(\sqrt{Y^2+Z^2},X_0) \}=\{(X,Y,Z): H(X,Y,Z)=h(X_0)\},\]
 is a level set of $H^{\epsilon}$.  Note also that the curve $\{(X,Y,0): X=\psi^{\epsilon}(Y,X_0)\}$ lies on $D(X_0)$ and it is a solution curve to~\eqref{ode} if $Y$ is viewed as the $t$-variable.   Therefore, at $(X,Y,0)=(X,g(X,\epsilon),0)\in  D(X_0)\cap \hat{M}_{\epsilon}$, the vector
 \[\left(-Y\frac{g'(X,\epsilon)}{g(X,\epsilon)},1,0\right)=(-g'(X,\epsilon),1,0)\]
 is tangent to $D(X_0)$, and hence,  $\langle \nabla H^{\epsilon}(X,g(X,\epsilon),0), (-g'(X,\epsilon),1,0)\rangle=0$. Since ${\bf n}$ is parallel to $(-g'(X,\epsilon),1,0)$, $\langle \nabla H^{\epsilon}(X,g(X,\epsilon),0), {\bf n}\rangle=0$. Due to the rotation symmetry of $\hat{M}_{\epsilon}$ and $H^{\epsilon}$ about the $X$-axis, we conclude that,
 for $(X,Y,Z)\in \hat{M}_{\epsilon}$, $\langle \nabla H^{\epsilon}(X,Y,Z), {\bf n}\rangle=0$.
 \end{proof}

Let $L_k^0(X)$, for $k=1,2,3$, be the linear functions satisfying
$L_k^0(0)=l_k$, $L_k^0(1)=r_k$ for $k=1,2$, $L_3^0(0)=\phi_0$ and $L_3^0(1)=0$.
 Lemma \ref{foliation}  guarantees the existence of functions $L_k(X,Y,Z,\epsilon)$ for $k=1,2,3$   such that for each $\epsilon>0$ and  $Y^2 + Z^2 < g^2(X,\epsilon)$,
$L_k (X,0,0,\epsilon)=L^0_k$,  $L_k(0,Y,Z,\epsilon)=L_k^0(0)$, $L_k(1,Y,Z,\epsilon)=L_k^0(1)$,   and
${\frac {\partial  }{\partial {\bf n}}}L_k(X,Y,Z,\epsilon) =0$
 when $(X,Y,Z)\in \hat{M}_{\epsilon}$. For each $\epsilon>0$ and $k=1,2,3$,  introduce the functions $L^\epsilon_k$ in terms of variables $x, y$ and $z$:
 \begin{equation}\label{AF} L^\epsilon_k (x,y,z) = L_k(X,Y,Z,\epsilon)= L_k(x, g(x,\epsilon)y, g(x,\epsilon)z,\epsilon).
 \end{equation}

 Set
 \begin{align*}
 u(x,y,z)= & L^\epsilon_1(x,y,z)-c_1(x,y,z), \\
  v(x,y,z)= & L^\epsilon_2(x,y,z)-c_2(x,y,z),\\
 \phi(x,y,z)=& L^\epsilon_3(x,y,z)-\Phi(x,y,z).
 \end{align*}
 Then, problem \eqref{newPNP}-\eqref{newBV} is transformed into
\begin{align}\label{PNP}\begin{split}
\frac{1}{g^2}&\nabla\cdot\left(g^2JJ^{\tau}\nabla(\phi-L^\epsilon_3)\right)
   =-\lambda (\alpha_1(u-L^\epsilon_1)-\alpha_2(v-L^\epsilon_2)), \\
  \frac{\partial u}{\partial t}&= \frac{D_1}{g^2}\nabla \cdot
   \left(g^2JJ^{\tau}\nabla  (u-L^\epsilon_1)
     - \alpha_1(u-L^\epsilon_1(x))g^2JJ^{\tau}\nabla (\phi-L^\epsilon_3)\right),\\
    \frac{\partial v}{\partial t}&= \frac{D_2}{g^2} \nabla \cdot
 \left(g^2JJ^{\tau}\nabla (v-L^\epsilon_2)
 +\alpha_2(v-L^\epsilon_2(x))g^2JJ^{\tau}\nabla (\phi-L^\epsilon_3)\right),
 \end{split}
 \end{align}
 with the homogeneous   boundary conditions:
\begin{align}\label{BV}\begin{split}
\phi|_{L\cup R} = &u|_{L \cup R }=v|_{L \cup R }=0, \\
 \langle \nabla \phi, JJ^{\tau}\nu\rangle |_{M }
 = & \langle \nabla u, JJ^{\tau}\nu\rangle |_{M }=
 \langle \nabla v, JJ^{\tau}\nu\rangle |_{M }= 0.
\end{split}
\end{align}
  System \eqref{PNP} is supplemented with the initial conditions:
  \begin{equation}
  \label{IC}
  u(0) =u_0, \quad v(0)=v_0.
  \end{equation}
Introduce the subspace $H^1_D(\Omega)$ of $H^1(\Omega)$:
\[H^1_D(\Omega) =  \{ u \in H^1(\Omega): \ u |_{L \cup R} =0 \}.\]
 Let $M$ be the constant in Proposition \ref{invariantregion} and let
$\Sigma_\epsilon$ be the subset of $H^1_D(\Omega) \times
H^1_D(\Omega)$ given by
\begin{equation}
\label{IR}
  \Sigma_\epsilon = \{ (u,v) \in H^1_D(\Omega) \times H^1_D(\Omega): \
  \alpha_1 L^\epsilon_1 -M \le \alpha_1 u \le \alpha_1 L^\epsilon_1, \
  \alpha_2 L^\epsilon_2 -M \le \alpha_2 v \le \alpha_2 L^\epsilon_2 \}.
  \end{equation}
It follows from Proposition \ref{invariantregion} that, if
$(u_0, v_0)\in \Sigma_{\epsilon}$, then $(u(t), v(t)) \in { {\Sigma}}_\epsilon$
for every $t \ge 0$.  Throughout this paper,  for every  $\epsilon>0$,  we denote by
$S^{\epsilon}(t)_{t \ge 0}$ the solution operator associated with problem
\eqref{PNP}-\eqref{IC}.
We will use the same symbol ${\mathcal{A}}_\epsilon$ to denote the global attractors
 of $S^{\epsilon}(t)_{t \ge 0}$ and that of problem \eqref{newPNP}-\eqref{newBV} when no confusion arises.

 The corresponding one-dimensional limiting system~\eqref{genlimPNP} is transformed into \begin{align}\label{limPNP}\begin{split}
 & {\frac 1{g^2_0}} {\frac {\partial}{\partial x}}
 \left (g_0^2 {\frac {\partial}{\partial x}} \left ( \phi -L_3^0 \right )\right )
 =-\lambda \left(\alpha_1(u-L^0_1)-\alpha_2(v-L^0_2)\right),\\
& \frac{\partial u}{\partial t}= \frac{D_1}{g_0^2}
  {\frac {\partial}{\partial x}}\left( g_0^2  {\frac {\partial}{\partial x}}   (u-L^0_1)
  -\alpha_1(u-L^0_1)g_0^2{\frac {\partial}{\partial x}} (\phi-L^0_3)\right),\\
   &   \frac{\partial v}{\partial t}= \frac{D_2}{g_0^2}
  {\frac {\partial}{\partial x}}\left(g_0^2 {\frac {\partial}{\partial x}}  (v-L^0_2)
  +\alpha_2(v-L^0_2)g_0^2  {\frac {\partial}{\partial x}} (\phi-L^0_3) \right),
 \end{split}\end{align}
with the homogeneous Dirichlet boundary conditions
\begin{equation}\label{limBV}
\phi=  u= v=0, \quad x =0,1,
\end{equation}
and the initial conditions
 \begin{equation}
 \label{limIC}
u(0) = u_0, \quad {\rm and} \ v(0) =v_0.
\end{equation}

Since ${\tilde{\Sigma}}_0$  is an invariant region
for problem \eqref{genlimPNP}-\eqref{genlimBV}, we find that
  the one-dimensional problem \eqref{limPNP}-\eqref{limIC} also has a positively invariant region which is given by
\begin{equation}
\label{limIR}
  \Sigma_0 = \{ (u,v) \in H^1_0(0,1) \times H^1_0(0,1): \
  \alpha_1 L^0_1 -M \le \alpha_1 u \le \alpha_1 L^0_1, \
  \alpha_2 L^0_2 -M \le \alpha_2 v \le \alpha_2 L^0_2 \}.
  \end{equation}
  Similar to  system \eqref{genlimPNP}-\eqref{genlimBV},
    problem \eqref{limPNP}-\eqref{limIC} is      well-posed in $\Sigma_0$, that is, for each $(u_0, v_0) \in \Sigma_0$, there exists a unique solution $(u, v)$ for
problem \eqref{limPNP}-\eqref{limIC}   which is defined for all $t \ge 0$ and
$(u, v) \in {\mathcal{C}}([0, \infty), \Sigma_0)$. Further, the solutions are continuous in initial data with respect to  the topology of $H^1_0(0,1) \times H^1_0(0,1)$.
Therefore,  there  is a  continuous dynamical system $S^0(t)_{t\ge 0}$ associated with
problem \eqref{limPNP}-\eqref{limIC} such that for each $t \ge 0$
and $(u_0, v_0) \in \Sigma_0$, $S^0(t) (u_0, v_0) = (u(t), v(t))$,
the solution of problem \eqref{limPNP}-\eqref{limIC}.
When no  confusion arises, we use the same symbol
${\mathcal{A}}_0$ to denote the global attractors of
$S^0(t)_{t\ge 0}$ and problem  \eqref{genlimPNP}-\eqref{genlimBV}.

\subsection{Uniform estimates of global attractors}
   In this section, we derive  uniform
estimates of the global attractors ${\mathcal{A}}_\epsilon$ in
$\epsilon$ which are necessary for establishing the upper semi-continuity of ${\mathcal{A}}_\epsilon$ at $\epsilon =0$. In
what follows, we reformulate problem \eqref{PNP}-\eqref{IC} as an
abstract differential equation in $H^1_D(\Omega) \times
H^1_D(\Omega)$.

Given $\epsilon >0$, define an inner product $(\cdot,\cdot)_{H_{\epsilon}}$
 on $L^2(\Omega)$   by
\[(v,w)_{H_{\epsilon}}=\int_{\Omega}\frac{g^2}{\epsilon^2}vw\,dx\,dy\,dz,\]
and a bilinear form $a_{\epsilon}(\cdot,\cdot)$ on $\left(H^1_{D} (\Omega)\right)^2$ by
 \[a_{\epsilon}(w_1,w_2)=(J^{\tau}\nabla w_1,
 J^{\tau}\nabla w_2)_{H_{\epsilon}}
 =\int_{\Omega}{\frac {g^2}{\epsilon^2}} J^{\tau}\nabla w_1 \cdot J^{\tau}\nabla
 w_2 \,dx\,dy\,dz.\]
  In the sequel, we    denote $\|w\|_p$ the standard norm of $w$ for $w\in L^p(\Omega)$ or $w\in L^p([0,1])$, $\|w\|_{H^s}$ the standard norm of $w$ for  $w\in H^s(\Omega)$ or $w\in H^s([0,1])$. Also,   denote $H_\epsilon$ the space
 $L^2(\Omega)$ with the inner product
   $(\cdot,\cdot)_{H_{\epsilon}}$, and $X_\epsilon$ the space
   $H^1_D(\Omega)$ with the norm
\[\|w\|_{X_{\epsilon}}=\left(\| \nabla w\|^2_2
  +\frac{1}{\epsilon^2}\|w_y\|^2_2
  +\frac{1}{\epsilon^2}\|w_z\|^2_2\right)^{1/2}.\]
  Since Poincare inequality holds in $H^1_{D} (\Omega)$,   the norm $\|w\|_{X_{\epsilon}}$ for  $w \in H^1_{D} (\Omega)$  is equivalent to the
  norm given by
  \[ \left(\| w\|^2_{H^1}
  +\frac{1}{\epsilon^2}\|w_y\|^2_2
  +\frac{1}{\epsilon^2}\|w_z\|^2_2\right)^{1/2}.\]
   Due to  assumption \eqref{gC}, there exist positive constants $C_1, C_2, C_3$ (independent of $\epsilon$)  and $\epsilon_1$   such that,   for all $0 < \epsilon \le \epsilon_1$ and $x \in (0,1)$,
  \begin{equation}
  \label{g_inequality}
  {\frac {|g_x|}{g}} \le C_1, \quad C_2 \le {\frac g\epsilon} \le C_3.
  \end{equation}
 Consequently, $\sqrt{a_{\epsilon}(w,w)}$ is equivalent to the norm $\| w \|_{X_\epsilon}$, that is,
 \begin{equation}
    \label{honebd}
  C_4\|w\|^2_{X_{\epsilon}}\le a_{\epsilon}(w,w)\le
  C_5 \|w\|^2_{X_{\epsilon}}
  \end{equation}
 for some constants $C_4$ and $C_5$ (independent of $\epsilon$).  It follows from \eqref{honebd} that for each $\epsilon>0$,  the
  triple $\{ H^1_D (\Omega), H_\epsilon, a_\epsilon(\cdot, \cdot)\}$ defines a unique unbounded operator ${\mathcal{L}}_\epsilon$ on $H^1_D(\Omega)$ with domain
  $D({\mathcal{L}}_\epsilon)$ in the following way: an element $u \in H^1_D(\Omega)$ belongs to $D({\mathcal{L}}_\epsilon)$  if   $a_\epsilon(u,v)$ is
continuous in $v \in H^1_D(\Omega)$ for the topology induced from
$H_\epsilon$ and $( {\mathcal{L}}_\epsilon u , v)_{H_\epsilon} =
a_\epsilon (u,v)$ for  $(u,v) \in D({\mathcal{L}}_\epsilon) \times H^1_D(\Omega)$.
In fact,
\[D({\mathcal{L}}_\epsilon) = \{u\in
H^1_D(\Omega): \ {\mathcal{L}}_\epsilon u \in H_\epsilon \},\]
and for every $u \in D({\mathcal{L}}_\epsilon)$,
\[{\mathcal{L}}_\epsilon u = - {\frac 1{g^2}} \nabla \cdot \left (
g^2 J J^\tau \nabla u\right ).\]
Since the operator ${\mathcal{L}}_\epsilon$ is self-adjoint on
$H_\epsilon$ and positive, the fractional power
${\mathcal{L}}_\epsilon^{  1/2}$ is well-defined with domain
$D({\mathcal{L}}_\epsilon^{1/2})= H^1_D(\Omega)$, and for
$u \in H^1_D(\Omega)$,
\[\|{\mathcal{L}}_\epsilon^{\frac 12} u\|^2_{H_\epsilon} = a_\epsilon (u,u).\]
 In view of  \eqref{honebd}   there exist $C_6$ and $C_7$ such that
 \begin{equation}\label{lhalfbd}
  C_6\|u \|_{X_{\epsilon}}\le \|{\mathcal{L}}_\epsilon^{\frac 12} u\|_{H_\epsilon} \le
  C_7 \|u\|_{X_{\epsilon}}.
  \end{equation}
  With the above notations, system \eqref{PNP} can be rewritten as
 \begin{align}\label{absPNP}\begin{split}
 {\mathcal{L}}_\epsilon \phi
 & = \lambda \alpha_1(u-L^\epsilon_1)- \lambda
 \alpha_2(v-L^\epsilon_2) -
\frac{1}{g^2}\nabla\cdot\left(g^2JJ^{\tau}\nabla L_3^\epsilon \right),\\
   \frac{\partial u}{\partial t} + D_1 {\mathcal{L}}_\epsilon u &= - \frac{D_1}{g^2}\nabla \cdot
   \left(g^2JJ^{\tau}\nabla  L^\epsilon_1
     + \alpha_1(u-L^\epsilon_1 )g^2JJ^{\tau}\nabla (\phi-L^\epsilon_3)\right),\\
    \frac{\partial v}{\partial t} +
 D_2 {\mathcal{L}}_\epsilon v
    &=  - \frac{D_2}{g^2} \nabla \cdot
 \left(g^2JJ^{\tau}\nabla   L^\epsilon_2
 -\alpha_2(v-L^\epsilon_2 )g^2JJ^{\tau}\nabla
 (\phi-L^\epsilon_3)\right).
 \end{split}
 \end{align}
By the construction of functions $L^{\epsilon}_k$ ($k=1,2,3$),
there exists $\epsilon_1 >0$ such that for any $0<\epsilon \le \epsilon_1$,
the following
uniform bounds in $\epsilon$ hold:
\begin{equation}
\label{unibdL}
 \|L_k^\epsilon \|_\infty +  \| L_k^\epsilon \|_{H_\epsilon}
 + \|J^\tau  \nabla L_k^\epsilon \|_{H_\epsilon}
 + \|JJ^\tau  \nabla L_k^\epsilon \|_{H_\epsilon}
 + \|\frac{1}{g^2}\nabla\cdot\left(g^2JJ^{\tau}\nabla L_k^\epsilon
 \right)\|_{H_\epsilon} \le C,
 \end{equation}
 where  $ C$ is independent of $\epsilon$.
 Then it follows  from
  the positive invariance of $\Sigma_\epsilon$ that
  there exists  a constant $C$ (independent of $\epsilon$) such that
 for any initial datum $(u_0, v_0) \in \Sigma_\epsilon$, the
 solution $(u, v)$ of problem \eqref{PNP}-\eqref{IC} satisfies,
 for all $t \ge 0$:
 \begin{equation}
 \label{Linfbd}
 \| u(t) \|_{\infty} + \| v(t) \|_{\infty} \le C
 \quad  {\rm and} \quad
 \| u(t) \|_{H_\epsilon} + \| v(t) \|_{H_\epsilon} \le C.
\end{equation}
Next, we start to derive uniform estimates of solutions in
$\epsilon$ in the space $H^1_D(\Omega) \times H^1_D(\Omega)$.

\begin{lem}\label{lemma31}  There exist  a
constant $C$ (independent of $\epsilon$)  and $\epsilon_1 >0$
such that for any  $0<\epsilon \le \epsilon_1$ and $(u_0,
v_0) \in \Sigma_\epsilon$, the solution $(u,v)$ of problem
\eqref{PNP}-\eqref{IC} satisfies, for all $t \ge 0$:
\[\int_t^{t+1} \left ( \| u(t) \|_{X_\epsilon} + \| v(t)
\|_{X_\epsilon} \right ) dt \le C.\]
\end{lem}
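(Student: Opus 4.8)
The plan is to run a standard $H^1$ energy estimate for problem \eqref{PNP}-\eqref{IC}, but carried out entirely in the $\epsilon$-weighted spaces $H_\epsilon$ and $X_\epsilon$, tracking all constants so that they stay independent of $\epsilon$. The equivalences \eqref{g_inequality}, \eqref{honebd}, \eqref{lhalfbd} and the uniform bounds \eqref{unibdL}, \eqref{Linfbd} (the latter coming from the positive invariance of $\Sigma_\epsilon$, Proposition \ref{invariantregion}) are exactly what make every step uniform. I would work with the abstract formulation \eqref{absPNP}.

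\emph{Step 1: an a priori bound on the potential.} Pair the first equation of \eqref{absPNP} with $\phi$ in $H_\epsilon$. Since $\mathcal{L}_\epsilon$ is self-adjoint and $(\mathcal{L}_\epsilon \phi,\phi)_{H_\epsilon}=a_\epsilon(\phi,\phi)\ge C_4\|\phi\|_{X_\epsilon}^2$, and since every term on the right is estimated by Cauchy--Schwarz in $H_\epsilon$ together with the Poincar\'e inequality in $H^1_D(\Omega)$ (which gives $\|\phi\|_{H_\epsilon}\le C\|\phi\|_{X_\epsilon}$ uniformly in $\epsilon$ because $g/\epsilon\le C_3$), using \eqref{unibdL} and \eqref{Linfbd} one arrives at $a_\epsilon(\phi,\phi)\le C\|\phi\|_{X_\epsilon}$, hence $\|\phi\|_{X_\epsilon}\le C$ and $a_\epsilon(\phi-L^\epsilon_3,\phi-L^\epsilon_3)^{1/2}\le C$, with $C$ independent of $\epsilon$ and of $t\ge 0$.

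\emph{Step 2: the energy inequality for $u$ and $v$.} Pair the $u$-equation of \eqref{absPNP} with $u$ in $H_\epsilon$. The time term produces $\tfrac12\frac{d}{dt}\|u\|_{H_\epsilon}^2$, the linear term $D_1 a_\epsilon(u,u)$. After integration by parts the right-hand side becomes $-D_1 a_\epsilon(L^\epsilon_1,u)-D_1\alpha_1\epsilon^{-2}\int_\Omega(u-L^\epsilon_1)\,g^2\,(J^\tau\nabla(\phi-L^\epsilon_3))\cdot(J^\tau\nabla u)\,dx\,dy\,dz$; all boundary contributions vanish because $u=0$ on $L\cup R$ and $\langle JJ^\tau\nabla w,\nu\rangle=0$ on $M$ for $w=\phi,L^\epsilon_1,L^\epsilon_3$ (the last two by the construction in Lemma~\ref{foliation} and \eqref{AF}). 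The first term is bounded by $a_\epsilon(L^\epsilon_1,L^\epsilon_1)^{1/2}a_\epsilon(u,u)^{1/2}\le C\|u\|_{X_\epsilon}$ by \eqref{unibdL}; for the second, the pointwise bound $|u-L^\epsilon_1|\le C$ from \eqref{Linfbd} and \eqref{unibdL}, the Cauchy--Schwarz inequality for $a_\epsilon$, and Step~1 give a bound $C\,a_\epsilon(\phi-L^\epsilon_3,\phi-L^\epsilon_3)^{1/2}a_\epsilon(u,u)^{1/2}\le C\|u\|_{X_\epsilon}$. Young's inequality, absorbing into $D_1 a_\epsilon(u,u)\ge D_1 C_4\|u\|_{X_\epsilon}^2$, then yields $\frac{d}{dt}\|u\|_{H_\epsilon}^2+D_1 C_4\|u\|_{X_\epsilon}^2\le C$, and likewise $\frac{d}{dt}\|v\|_{H_\epsilon}^2+D_2 C_4\|v\|_{X_\epsilon}^2\le C$.

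\emph{Step 3: conclusion.} Add the two inequalities and integrate over $[t,t+1]$; the boundary terms $\|u(t+1)\|_{H_\epsilon}^2-\|u(t)\|_{H_\epsilon}^2$ and the analogue for $v$ are controlled by \eqref{Linfbd}, so $\int_t^{t+1}\big(\|u(s)\|_{X_\epsilon}^2+\|v(s)\|_{X_\epsilon}^2\big)\,ds\le C$ uniformly; the Cauchy--Schwarz inequality on $[t,t+1]$ upgrades this to the claimed $L^1$-in-time bound. The only genuinely delicate point is the nonlinear drift term in Step~2: it is handled only because Step~1 first furnishes an $\epsilon$-uniform bound on $\|\phi\|_{X_\epsilon}$ and because the invariant region $\Sigma_\epsilon$ supplies the $L^\infty$ bound on $u$, so that the product splits under Cauchy--Schwarz into two factors each comparable to an $X_\epsilon$-norm; everything else is bookkeeping with the $\epsilon$-independent constants in \eqref{g_inequality}, \eqref{honebd}, \eqref{lhalfbd}.
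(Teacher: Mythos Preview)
Your proof is correct and follows essentially the same route as the paper's: bound $\|\mathcal{L}_\epsilon^{1/2}\phi\|_{H_\epsilon}$ (equivalently $\|\phi\|_{X_\epsilon}$) by pairing the Poisson equation with $\phi$, then pair the $u$- and $v$-equations with $u$ and $v$, control the drift term via the $L^\infty$ bound from $\Sigma_\epsilon$ and the Step~1 bound on $\phi$, and integrate in time using \eqref{Linfbd}. Your write-up is in fact a bit more explicit than the paper's about the boundary terms in the integration by parts and the final Cauchy--Schwarz passage from the $L^2$-in-time to the $L^1$-in-time bound.
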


\begin{proof}
Taking  the inner product of the first equation in \eqref{absPNP}
with $\phi$ in $H_\epsilon$, we find that
\[ \| {\mathcal{L}}_\epsilon^{\frac 12} \phi \|^2_{H_\epsilon}
= \lambda \alpha_1(u-L^\epsilon_1, \phi)_{H_\epsilon}- \lambda
 \alpha_2(v-L^\epsilon_2, \phi)_{H_\epsilon}+
(J^{\tau}\nabla L_3^\epsilon, J^\tau \nabla \phi)_{H_\epsilon}.\]
By \eqref{unibdL} and \eqref{Linfbd} we have
\begin{align*}
 \| {\mathcal{L}}_\epsilon^{\frac 12} \phi \|^2_{H_\epsilon}
\le & \lambda \alpha_1 (\|u \|_{H_\epsilon} + \| L_1^\epsilon
\|_{H_\epsilon}) \| \phi \|_{H_\epsilon}
 + \lambda \alpha_2 (\|v
\|_{H_\epsilon} + \| L_2^\epsilon \|_{H_\epsilon}) \| \phi
\|_{H_\epsilon} + \| J^\tau \nabla L_3^\epsilon \|_{H_\epsilon} \|
{\mathcal{L}}_\epsilon^{\frac 12} \phi \|_{H_\epsilon}\\
\le & C\|{\mathcal{L}}_\epsilon^{\frac 12} \phi \|_{H_\epsilon} \le {\frac
12}\| {\mathcal{L}}_\epsilon^{\frac 12} \phi \|_{H_\epsilon}^2 +{\frac 12} C^2,
\end{align*}
which implies that
\begin{equation}
\label{pflemma311}
 \| {\mathcal{L}}_\epsilon^{\frac 12} \phi\|_{H_\epsilon} \le C.
\end{equation}
Now, taking the inner product of the second equation in
\eqref{absPNP} with $u$ in $H_\epsilon$, we get
\[{\frac 12} {\frac d{dt}} \|u \|^2_{H_\epsilon} + D_1 \|
{\mathcal{L}}_\epsilon^{\frac 12} u \|^2_{H_\epsilon} = D_1 \left
( J^\tau \nabla L_1^\epsilon, J^\tau \nabla u \right
)_{H_\epsilon} + D_1 \alpha_1 \left ((u-L_1^\epsilon) J^\tau
\nabla (\phi -L_3^\epsilon), J^\tau \nabla u \right)_{H_\epsilon}.\]
It follows from \eqref{unibdL}-\eqref{pflemma311} that the
right-hand side of the above is bounded by
\begin{align*}
C_1\| J^\tau \nabla L_1^\epsilon \|_{H_\epsilon} \|
{\mathcal{L}}_\epsilon^{\frac 12} u \|_{H_\epsilon} +&
D_1\alpha_1(\|u\|_\infty + \|L_1^\epsilon \|_\infty) (\|
{\mathcal{L}}_\epsilon^{\frac 12} \phi \|_{H_\epsilon} + \| J^\tau
\nabla L_3^\epsilon \|_{H_\epsilon}) \|
{\mathcal{L}}_\epsilon^{\frac 12} u \|_{H_\epsilon}\\
 \le & C \|
{\mathcal{L}}_\epsilon^{\frac 12} u \|_{H_\epsilon} \le {\frac
{1}2} D_1\| {\mathcal{L}}_\epsilon^{\frac 12} u \|^2_{H_\epsilon}
+C_1.
\end{align*}
Therefore,
\begin{equation}
\label{pflemma312}
  {\frac d{dt}} \|u \|^2_{H_\epsilon} + D_1 \|
{\mathcal{L}}_\epsilon^{\frac 12} u \|^2_{H_\epsilon} \le C_2.
\end{equation}
Similarly,
\begin{equation}
\label{pflemma313}
  {\frac d{dt}} \|v \|^2_{H_\epsilon} + D_2 \|
{\mathcal{L}}_\epsilon^{\frac 12} v \|^2_{H_\epsilon} \le C_3.
\end{equation}
 Hence, for all $t \ge 0$:
 \[{\frac d{dt}} \left (  \|u \|^2_{H_\epsilon} + \|v \|^2_{H_\epsilon}\right )
  + C_4 \left ( \|
{\mathcal{L}}_\epsilon^{\frac 12} u \|^2_{H_\epsilon} +\|
{\mathcal{L}}_\epsilon^{\frac 12} v \|^2_{H_\epsilon} \right ) \le
C_2 +C_3,\]
which,  along \eqref{lhalfbd} and \eqref{Linfbd}, implies Lemma
\ref{lemma31}.
\end{proof}

\begin{lem}
\label{lemma32}  There exist positive constants $\epsilon_1$ and
  $C$   such that for any $0< \epsilon \le \epsilon_1$ and  $(u_0,
v_0) \in \Sigma_\epsilon$, the solution $(u,v)$ of problem
\eqref{PNP}-\eqref{IC} satisfies, for all $t \ge 1$:
\[\|{\mathcal{L}}_\epsilon \phi (t) \|_{X_\epsilon} +  \| u(t) \|_{X_\epsilon}
+ \| v(t)\|_{X_\epsilon}    \le C.\]
\end{lem}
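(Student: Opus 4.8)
The plan is a bootstrap closed by the uniform Gronwall lemma. The missing quantitative ingredient is the uniform-in-$\epsilon$ bound $\int_t^{t+1}\bigl(\|{\mathcal{L}}_\epsilon^{1/2}u\|_{H_\epsilon}^2+\|{\mathcal{L}}_\epsilon^{1/2}v\|_{H_\epsilon}^2\bigr)\,ds\le C$ for all $t\ge0$, which is obtained immediately by integrating \eqref{pflemma312}--\eqref{pflemma313} over $[t,t+1]$ and using \eqref{Linfbd} (a slight strengthening of Lemma~\ref{lemma31}). Before treating $u$ and $v$ I would first improve \eqref{pflemma311}: taking the $H_\epsilon$-inner product of the Poisson equation in \eqref{absPNP} with ${\mathcal{L}}_\epsilon\phi$ and bounding $\alpha_1(u-L_1^\epsilon)-\alpha_2(v-L_2^\epsilon)-g^{-2}\nabla\cdot(g^2JJ^{\tau}\nabla L_3^\epsilon)$ in $H_\epsilon$ via \eqref{Linfbd} and \eqref{unibdL}, one gets $\|{\mathcal{L}}_\epsilon\phi\|_{H_\epsilon}\le C$ with $C$ independent of $\epsilon$; together with \eqref{pflemma311}, \eqref{lhalfbd} and the geometric bounds \eqref{g_inequality}, this controls $\phi-L_3^\epsilon$ uniformly in an anisotropic $H^2$-type space, which is the regularity of $\phi$ needed below.

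Next I test the $u$-equation of \eqref{absPNP} with ${\mathcal{L}}_\epsilon u$ in $H_\epsilon$. Since ${\mathcal{L}}_\epsilon$ is self-adjoint and $t$-independent,
\[\frac12\frac{d}{dt}\|{\mathcal{L}}_\epsilon^{1/2}u\|^2_{H_\epsilon}+D_1\|{\mathcal{L}}_\epsilon u\|^2_{H_\epsilon}=\Bigl(-\frac{D_1}{g^2}\nabla\cdot\bigl(g^2JJ^{\tau}\nabla L_1^\epsilon+\alpha_1(u-L_1^\epsilon)g^2JJ^{\tau}\nabla(\phi-L_3^\epsilon)\bigr),\,{\mathcal{L}}_\epsilon u\Bigr)_{H_\epsilon}.\]
The contribution of $\nabla\cdot(g^2JJ^{\tau}\nabla L_1^\epsilon)$ is bounded by $C\|{\mathcal{L}}_\epsilon u\|_{H_\epsilon}$ via \eqref{unibdL}. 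For the quadratic term I would expand
\[\frac1{g^2}\nabla\cdot\bigl(g^2(u-L_1^\epsilon)JJ^{\tau}\nabla(\phi-L_3^\epsilon)\bigr)=\nabla(u-L_1^\epsilon)\cdot JJ^{\tau}\nabla(\phi-L_3^\epsilon)+(u-L_1^\epsilon)\frac1{g^2}\nabla\cdot\bigl(g^2JJ^{\tau}\nabla(\phi-L_3^\epsilon)\bigr);\]
the last summand has $H_\epsilon$-norm $\le\|u-L_1^\epsilon\|_\infty\bigl(\|{\mathcal{L}}_\epsilon\phi\|_{H_\epsilon}+\|g^{-2}\nabla\cdot(g^2JJ^{\tau}\nabla L_3^\epsilon)\|_{H_\epsilon}\bigr)\le C$ by \eqref{Linfbd}, \eqref{unibdL} and the $\phi$-bound above, whereas the first summand is handled by H\"older together with anisotropic interpolation --- bounding $\|\nabla u\|$ in an intermediate space by $C\|\nabla u\|_{L^2}^{1/2}\|{\mathcal{L}}_\epsilon u\|_{H_\epsilon}^{1/2}$ and using the uniform bound on $JJ^{\tau}\nabla(\phi-L_3^\epsilon)$ --- to give an $H_\epsilon$-norm $\le C\bigl(1+\|{\mathcal{L}}_\epsilon^{1/2}u\|_{H_\epsilon}^{1/2}\|{\mathcal{L}}_\epsilon u\|_{H_\epsilon}^{1/2}\bigr)$, all Sobolev constants being $\epsilon$-uniform by \eqref{g_inequality}. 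Cauchy--Schwarz against ${\mathcal{L}}_\epsilon u$ and Young's inequality then absorb $\frac{D_1}{2}\|{\mathcal{L}}_\epsilon u\|_{H_\epsilon}^2$ and leave
\[\frac{d}{dt}\|{\mathcal{L}}_\epsilon^{1/2}u\|^2_{H_\epsilon}+D_1\|{\mathcal{L}}_\epsilon u\|^2_{H_\epsilon}\le C\|{\mathcal{L}}_\epsilon^{1/2}u\|^2_{H_\epsilon}+C,\]
and an identical inequality for $v$.

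Feeding the two time-integral bounds and these two differential inequalities into the uniform Gronwall lemma yields $\|{\mathcal{L}}_\epsilon^{1/2}u(t)\|_{H_\epsilon}^2+\|{\mathcal{L}}_\epsilon^{1/2}v(t)\|_{H_\epsilon}^2\le C$ for all $t\ge1$, hence $\|u(t)\|_{X_\epsilon}+\|v(t)\|_{X_\epsilon}\le C$ by \eqref{lhalfbd}; the bound on ${\mathcal{L}}_\epsilon\phi(t)$ in $X_\epsilon$ then follows directly from the Poisson equation in \eqref{absPNP}, which expresses ${\mathcal{L}}_\epsilon\phi$ as a linear combination of $u$, $v$, $L_1^\epsilon$, $L_2^\epsilon$ and $g^{-2}\nabla\cdot(g^2JJ^{\tau}\nabla L_3^\epsilon)$, using the bounds just proved and the uniform smoothness of the $L_k^\epsilon$ coming from Lemma~\ref{foliation}. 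I expect the main obstacle to be exactly the quadratic term $\nabla u\cdot JJ^{\tau}\nabla(\phi-L_3^\epsilon)$: all the Sobolev and elliptic-regularity estimates must be carried out in the weighted spaces $H_\epsilon$, $X_\epsilon$ with constants that remain bounded as $\epsilon\to0$, and keeping this uniformity --- which rests on the geometric estimates \eqref{g_inequality} for $g$ --- is the delicate part of the argument.
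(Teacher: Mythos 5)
Your proposal reproduces the paper's proof of Lemma~\ref{lemma32} almost step for step: the preliminary bound $\|{\mathcal{L}}_\epsilon\phi\|_{H_\epsilon}\le C$ from the Poisson equation, testing the $u$-equation with ${\mathcal{L}}_\epsilon u$, the decomposition of $g^{-2}\nabla\cdot\bigl((u-L_1^\epsilon)g^2JJ^\tau\nabla(\phi-L_3^\epsilon)\bigr)$ into a gradient-cross-gradient term and a lower-order term, the H\"older--interpolation treatment of the former (the paper uses the $L^3$--$L^6$--$L^2$ split and $\|\nabla w\|_3\lesssim\|\nabla w\|_2^{1/2}\|\nabla w\|_{H^1}^{1/2}$, precisely the ``anisotropic interpolation'' you describe), and finally the uniform Gronwall lemma fed by the time-integral bound extracted from the proof of Lemma~\ref{lemma31}. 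The only cosmetic difference is that you phrase the control of $JJ^\tau\nabla(\phi-L_3^\epsilon)$ as a ``uniform bound'' rather than spelling out the $L^6\hookleftarrow H^1$ step the paper uses, but the content is the same.
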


\begin{proof}
By \eqref{unibdL}, \eqref{Linfbd} and the first equation in
\eqref{absPNP} we get
\begin{equation}
 \label{pflemma321}
 \| {\mathcal{L}}_\epsilon \phi \|_{H_\epsilon} \le
 C \left (\| u\|_{H_\epsilon} + \| v\|_{H_\epsilon} +\|
 L_1^\epsilon\|_{H_\epsilon}+\| L_2^\epsilon\|_{H_\epsilon}+
 \|\frac{1}{g^2}\nabla\cdot\left(g^2JJ^{\tau}\nabla L_3^\epsilon \right)
  \|_{H_\epsilon} \right )
  \le C.
 \end{equation}
 Taking the inner product of the second equation in \eqref{absPNP}
 with ${\mathcal{L}}_\epsilon u$ in $H_\epsilon$,  we find
 \begin{align} \label{pflemma322}
 {\frac 12} {\frac d{dt}} \| {\mathcal{L}}_\epsilon^{\frac 12} u
 \|^2_{H_\epsilon} + D_1 \| {\mathcal{L}}_\epsilon  u
 \|^2_{H_\epsilon}
 = &-  \left ( {\frac {D_1}{g^2}} \nabla \cdot (g^2 JJ^\tau \nabla
 L_1^\epsilon),   {\mathcal{L}}_\epsilon  u
 \right )_{H_\epsilon} \nonumber\\
 &- \left ( \frac{D_1 \alpha_1}{g^2}\nabla \cdot
    (  (u-L^\epsilon_1 )g^2JJ^{\tau}\nabla (\phi-L^\epsilon_3) ),{\mathcal{L}}_\epsilon  u
 \right )_{H_\epsilon}.
 \end{align}
 By \eqref{unibdL}, the first term on the right-hand side of
 \eqref{pflemma322} is bounded by
\begin{equation}
\label{pflemma323} |\left ( {\frac {D_1}{g^2}} \nabla \cdot (g^2
JJ^\tau \nabla
 L_1^\epsilon),   {\mathcal{L}}_\epsilon  u
 \right )_{H_\epsilon}|
 \le D_1 \| {\frac { 1}{g^2}} \nabla \cdot (g^2
JJ^\tau \nabla
 L_1^\epsilon)\|_{H_\epsilon} \| {\mathcal{L}}_\epsilon  u
 \|_{H_\epsilon}
 \le {\frac 14}D_1\| {\mathcal{L}}_\epsilon  u
 \|_{H_\epsilon}^2 +C.
 \end{equation}
 For the second term on the right-hand side of \eqref{pflemma322},
 we have
 \begin{align}\label{pflemma324}
 - &\left ( \frac{D_1 \alpha_1}{g^2}\nabla \cdot
    (  (u-L^\epsilon_1 )g^2JJ^{\tau}\nabla (\phi-L^\epsilon_3) ),{\mathcal{L}}_\epsilon  u
 \right )_{H_\epsilon}\nonumber\\
 &= -D_1 \alpha_1 \left ( \nabla (u-L_1^\epsilon) \cdot JJ^\tau \nabla (\phi
 -L_3^\epsilon), {\mathcal{L}}_\epsilon  u
 \right )_{H_\epsilon}\nonumber\\
  & -D_1\alpha_1
 \left ((u-L_1^\epsilon) {\frac 1{g^2}} \nabla \cdot (g^2 JJ^\tau \nabla (\phi
 -L_3^\epsilon)),{\mathcal{L}}_\epsilon  u\right )_{H_\epsilon}.
 \end{align}
  Using  \eqref{unibdL} and \eqref{pflemma321}, the
  first term on the right-hand side of \eqref{pflemma324} is bounded by
 \begin{eqnarray}
 \label{pflemma325}
&&D_1 \alpha_1 |\left (
 \nabla (u-L_1^\epsilon) \cdot JJ^\tau \nabla (\phi
 -L_3^\epsilon), {\mathcal{L}}_\epsilon  u
 \right )_{H_\epsilon}| \nonumber\\
 &&\le
 D_1 \alpha_1 \| \nabla (u- L_1^\epsilon) \|_3
 \| {\frac {g^2}{\epsilon^2}} JJ^\tau \nabla (\phi -L_3^\epsilon)
 \|_6 \| {\mathcal{L}}_\epsilon u \|_2 \nonumber \\
 && \le C \| \nabla (u -L_1^\epsilon) \|_2^{\frac 12} \| \nabla
 (u-L_1^\epsilon)\|^{\frac 12}_{H^1}
 \| {\frac {g^2}{\epsilon^2}} JJ^\tau \nabla (\phi -L_3^\epsilon)
 \|_{H^1} \|{\mathcal{L}}_\epsilon u \|_2 \nonumber\\
 &&\le \left ( \|{\mathcal{L}}_\epsilon^{\frac 12} u \|_{H_\epsilon}
 + \| J^\tau \nabla L_1^\epsilon \|_{H_\epsilon}
  \right )^{\frac 12}
  \left (
  \|{\mathcal{L}}_\epsilon u \|_{H_\epsilon}
  + \| J^\tau \nabla L_1^\epsilon \|_{H_\epsilon}
  + \|{\frac {1}{g^2}} \nabla \cdot (g^2 JJ^\tau \nabla
  L_1^\epsilon)\|_{H_\epsilon}
  \right )^{\frac 12} \nonumber \\
  && \times
  \left (
   \|{\mathcal{L}}_\epsilon \phi \|_{H_\epsilon} + \|JJ^\tau
   \nabla L_3^\epsilon \|_{H_\epsilon}   + \|{\frac {1}{g^2}} \nabla \cdot (g^2 JJ^\tau \nabla
  L_3^\epsilon)\|_{H_\epsilon}
  \right )  \|{\mathcal{L}}_\epsilon u \|_{H_\epsilon}
  \nonumber\\
  && \le C \left (
 \|{\mathcal{L}}_\epsilon^{\frac 12} u \|_{H_\epsilon} +C
  \right )^{\frac 12}
  \left (
\|{\mathcal{L}}_\epsilon u \|_{H_\epsilon} +C
  \right )^{\frac 12}   \|{\mathcal{L}}_\epsilon u \|_{H_\epsilon}
  \nonumber \\
  && \le
  {\frac 18} D_1\|{\mathcal{L}}_\epsilon u \|_{H_\epsilon}^2
  + C \|{\mathcal{L}}_\epsilon^{\frac 12} u \|_{H_\epsilon}^2
  +C.
 \end{eqnarray}
  The second  term on the right-hand side of \eqref{pflemma324} can be estimated as \begin{eqnarray}
\label{pflemma326}
 &&D_1\alpha_1\left |\left (
 (u-L_1^\epsilon) {\frac 1{g^2}} \nabla \cdot (g^2 JJ^\tau \nabla (\phi
 -L_3^\epsilon)),
 {\mathcal{L}}_\epsilon  u
 \right )_{H_\epsilon}\right | \nonumber \\
&& \le D_1\alpha_1
  \left (\| u \|_{\infty}
 + \|L_1^\epsilon\|_{\infty} \right )
 \|  {\frac 1{g^2}} \nabla \cdot (g^2 JJ^\tau \nabla (\phi
 -L_3^\epsilon))\|_{H_\epsilon}
  \|{\mathcal{L}}_\epsilon  u \|
  _{H_\epsilon} \nonumber\\
  &&
  \le D_1\alpha_1
  \left (\| u \|_{\infty}
 + \|L_1^\epsilon\|_{\infty} \right )
  \left (
  \| {\mathcal{L}}_\epsilon \phi \|_{H_\epsilon}
  + \|  {\frac 1{g^2}} \nabla \cdot (g^2 JJ^\tau \nabla
  L_3^\epsilon)\|_{H_\epsilon} \right )
  \|{\mathcal{L}}_\epsilon  u \|
  _{H_\epsilon} \nonumber\\
  && \le
  C \|{\mathcal{L}}_\epsilon  u \|
  _{H_\epsilon}
  \le {\frac 18}D_1 \|{\mathcal{L}}_\epsilon  u \|^2
  _{H_\epsilon} +C.
\end{eqnarray}
Combining the estimates \eqref{pflemma324}-\eqref{pflemma326}, we obtain
\begin{equation}\label{pflemma327}
 | \left (\frac{D_1 \alpha_1}{g^2}\nabla \cdot
    (  (u-L^\epsilon_1 )g^2JJ^{\tau}\nabla (\phi-L^\epsilon_3) ),
    {\mathcal{L}}_\epsilon  u\right )_{H_\epsilon}|
 \le {\frac 14}D_1 \|{\mathcal{L}}_\epsilon  u \|^2_{H_\epsilon}
 + C\|{\mathcal{L}}_\epsilon^{\frac 12}  u \|^2_{H_\epsilon}+C.
  \end{equation}
   It follows from \eqref{pflemma322}, \eqref{pflemma323} and
  \eqref{pflemma327}  that, for all $t \ge 0$,
 \begin{equation}
\label{pflemma328}
  {\frac d{dt}} \| {\mathcal{L}}_\epsilon^{\frac 12} u
 \|^2_{H_\epsilon} + D_1 \| {\mathcal{L}}_\epsilon  u
 \|^2_{H_\epsilon}
 \le C_1 \| {\mathcal{L}}_\epsilon^{\frac 12} u
 \|^2_{H_\epsilon} +C_2.
 \end{equation}
 Similarly,   for all $t \ge 0$,
  \begin{equation}
\label{pflemma329}
  {\frac d{dt}} \| {\mathcal{L}}_\epsilon^{\frac 12} v
 \|^2_{H_\epsilon} + D_2 \| {\mathcal{L}}_\epsilon  v
 \|^2_{H_\epsilon}
 \le C_1 \| {\mathcal{L}}_\epsilon^{\frac 12} v
 \|^2_{H_\epsilon} +C_2.
 \end{equation}
 Hence,   we have, for all $t \ge 0$,
 \begin{equation}\label{pflemma3210}
  {\frac d{dt}}  \left ( \| {\mathcal{L}}_\epsilon^{\frac 12} u \|^2_{H_\epsilon}
 + \| {\mathcal{L}}_\epsilon^{\frac 12} v \|^2_{H_\epsilon}  \right )
 +C_3 \left (  \| {\mathcal{L}}_\epsilon  u \|^2_{H_\epsilon}
 +  \| {\mathcal{L}}_\epsilon  v\|^2_{H_\epsilon} \right )
 \le C_1 \left (  \| {\mathcal{L}}_\epsilon^{\frac 12} u\|^2_{H_\epsilon}
 + \| {\mathcal{L}}_\epsilon^{\frac 12} v\|^2_{H_\epsilon} \right ) +C_2,
 \end{equation}
 which, along with Lemma \ref{lemma31} and the uniform Gronwall's
 lemma, implies that, for all $t \ge 1$,
\[ \| {\mathcal{L}}_\epsilon^{\frac 12} u(t)
 \|^2_{H_\epsilon}
 + \| {\mathcal{L}}_\epsilon^{\frac 12} v(t)
 \|^2_{H_\epsilon}     \le C.\]
The above estimate and the first equation in \eqref{absPNP} conclude
the proof.
   \end{proof}

Applying Gronwall's lemma to  \eqref{pflemma3210} for $t \in (0,1)$, then   by Lemma
\ref{lemma32} and the first equation in \eqref{absPNP}
 we find that there exists $\epsilon_1>0$ such that,
for any $R>0$, there exists $K$ depending on $R$ such that for any
$0<\epsilon \le \epsilon_1$ and $(u_0, v_0) \in \Sigma_\epsilon$
with $\|(u_0, v_0) \|_{X_\epsilon \times X_\epsilon} \le R$,
the following holds:
\begin{equation}
\label{alltimeh1bd}
\| \mathcal{L}_\epsilon \phi (t)\|_{X_\epsilon} +
 \| u(t) \|_{X_\epsilon} + \| v(t) \|_{X_\epsilon} \le K, \quad
 {\rm for} \ t \ge 0.
 \end{equation}
An immediate consequence of Lemma \ref{lemma32}  also shows that
all the global attractors ${\mathcal{A}}_\epsilon$ are uniformly
bounded in $\epsilon$ in the space $H^1_D(\Omega) \times
H^1_D(\Omega)$, that is, the following statement is true.

\begin{prop}
\label{proposition33} There exist positive constants $\epsilon_1$
and $C$ such that for all $0<\epsilon \le \epsilon_1$ and $(u,v)
\in {\mathcal{A}}_\epsilon$, the following holds:
$$ \|(u,v) \|_{X_\epsilon \times X_\epsilon} \le C.$$
\end{prop}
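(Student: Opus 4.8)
The plan is to deduce Proposition \ref{proposition33} almost immediately from Lemma \ref{lemma32}, once we recall the basic structural fact about global attractors. The key observation is that the global attractor $\mathcal{A}_\epsilon$ is invariant under the solution semigroup $S^\epsilon(t)$, hence every point of $\mathcal{A}_\epsilon$ lies on a complete (two-sided) orbit that remains in $\mathcal{A}_\epsilon$ for all time; in particular, given any $(u,v)\in\mathcal{A}_\epsilon$ we may write $(u,v)=S^\epsilon(1)(\tilde u,\tilde v)$ for some $(\tilde u,\tilde v)\in\mathcal{A}_\epsilon$. So $(u,v)$ is the value at time $t=1$ of a solution of problem \eqref{PNP}-\eqref{IC} starting from data in $\mathcal{A}_\epsilon$.

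First I would note that, by Proposition \ref{invariantregion} (together with the discussion establishing that $\Sigma_\epsilon$ is positively invariant), the attractor $\mathcal{A}_\epsilon$ is contained in $\Sigma_\epsilon$, so the initial datum $(\tilde u,\tilde v)$ above indeed satisfies the hypotheses of Lemma \ref{lemma32}. Then I would apply Lemma \ref{lemma32} at time $t=1$ to the solution starting from $(\tilde u,\tilde v)$: it yields a constant $C$, independent of $\epsilon$ for $0<\epsilon\le\epsilon_1$, such that
\[
\|\mathcal{L}_\epsilon\phi(1)\|_{X_\epsilon}+\|u(1)\|_{X_\epsilon}+\|v(1)\|_{X_\epsilon}\le C.
\]
Since $(u(1),v(1))=(u,v)$, this gives exactly $\|(u,v)\|_{X_\epsilon\times X_\epsilon}\le C$, which is the claim. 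The uniformity in $\epsilon$ is inherited directly from the uniformity already built into Lemma \ref{lemma32}, which in turn rests on the $\epsilon$-independent bounds \eqref{g_inequality}, \eqref{honebd}, \eqref{lhalfbd}, \eqref{unibdL} and \eqref{Linfbd}.

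There is essentially no obstacle here: the whole content has been front-loaded into Lemmas \ref{lemma31} and \ref{lemma32} and the invariance principle of Proposition \ref{invariantregion}. The only point requiring a word of care is the reduction $(u,v)=S^\epsilon(1)(\tilde u,\tilde v)$ with $(\tilde u,\tilde v)\in\mathcal{A}_\epsilon\subset\Sigma_\epsilon$, i.e. invoking invariance of the attractor so that the ``for all $t\ge1$'' conclusion of Lemma \ref{lemma32} can be pulled back to $t=1$ starting from attractor data; this is standard but should be stated explicitly. One could alternatively phrase it via the attraction property and a limiting argument, but the invariance route is cleanest and is the one I would write up.
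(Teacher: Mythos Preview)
Your proposal is correct and matches the paper's own treatment, which simply states that Proposition~\ref{proposition33} is an immediate consequence of Lemma~\ref{lemma32}. Your write-up makes explicit the standard invariance step $(u,v)=S^\epsilon(1)(\tilde u,\tilde v)$ with $(\tilde u,\tilde v)\in\mathcal{A}_\epsilon\subset\Sigma_\epsilon$, which is exactly the intended argument.
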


The following is an analogue of Lemma \ref{lemma32} for the
limiting system \eqref{limPNP}-\eqref{limIC}.

\begin{lem}\label{lemma34}
 There exists  $C>0$   such that for any $(u_0, v_0)
\in \Sigma_0$, the solution $(u,v)$ of problem
\eqref{limPNP}-\eqref{limIC} satisfies, for all $t \ge 1$:
\[ \| u(t) \|_{H^1} + \| v(t)\|_{H^1}    \le C.\]
In addition, there exists $K$ depending on $R$ when $\|(u_0,
v_0)\|_{H^1 \times H^1} \le R$ such that for all $t \ge 0$:
\[ \| u(t) \|_{H^1} + \| v(t)\|_{H^1}    \le K.\]
\end{lem}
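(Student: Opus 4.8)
The plan is to run the same energy-estimate scheme as in Lemma~\ref{lemma32}, now in the one-dimensional setting, where the embedding $H^1(0,1)\hookrightarrow L^\infty(0,1)$ makes the nonlinear estimates considerably cleaner. First I recast problem~\eqref{limPNP}--\eqref{limIC} abstractly. By \eqref{gC} and the smoothness of $g$, the function $g_0$ is continuous and strictly positive on $[0,1]$, so $c_0\le g_0^2(x)\le C_0$ for constants $0<c_0\le C_0$. Let $(v,w)_{g_0^2}=\int_0^1 g_0^2 vw\,dx$ be the associated weighted $L^2$ inner product, with induced norm $\|\cdot\|_{g_0^2}$, and let $\mathcal{L}_0 w=-g_0^{-2}\partial_x(g_0^2\partial_x w)$ be the corresponding self-adjoint positive operator on $H^1_0(0,1)$, so that $\|\mathcal{L}_0^{1/2}w\|_{g_0^2}^2=\int_0^1 g_0^2|\partial_x w|^2\,dx$; by Poincar\'e's inequality this is a norm on $H^1_0(0,1)$ equivalent to $\|w\|_{H^1}$. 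In these terms system~\eqref{limPNP} becomes, exactly as in~\eqref{absPNP},
\[\partial_t u+D_1\mathcal{L}_0 u=-\frac{D_1}{g_0^2}\partial_x\!\big(g_0^2\partial_x L_1^0\big)-\frac{D_1\alpha_1}{g_0^2}\partial_x\!\big((u-L_1^0)\,g_0^2\,\partial_x(\phi-L_3^0)\big),\]
and analogously for $v$. Since $\Sigma_0$ is positively invariant (see~\eqref{limIR} and the discussion after it), the solution satisfies $\|u(t)\|_\infty+\|v(t)\|_\infty\le C$ for all $t\ge0$, the analogue of~\eqref{Linfbd}, and every norm of the fixed affine functions $L_k^0$ (including $\|\mathcal{L}_0 L_k^0\|_\infty$, which is finite because $g_0\in C^2$ and $\partial_x L_k^0$ is constant) is a harmless constant.

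Next I extract an elliptic bound for $\phi$. The Poisson equation in~\eqref{limPNP} gives $\mathcal{L}_0\phi=\mathcal{L}_0 L_3^0+\lambda\big(\alpha_1(u-L_1^0)-\alpha_2(v-L_2^0)\big)$, whose right-hand side is bounded in $L^2(0,1)$ uniformly in $t$; since $\phi\in H^1_0(0,1)$, one-dimensional elliptic regularity yields $\|\phi\|_{H^2(0,1)}\le C$ for all $t\ge0$, and hence both $\|\partial_x(\phi-L_3^0)\|_\infty\le C$ (using $\|f\|_\infty\le C\|f\|_2^{1/2}\|f\|_{H^1}^{1/2}$) and $\|\mathcal{L}_0(\phi-L_3^0)\|_2\le C$. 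The ``$\int_t^{t+1}$'' bound is then obtained as in Lemma~\ref{lemma31}: pairing the $u$-equation with $u$ in $(\cdot,\cdot)_{g_0^2}$ and integrating by parts gives
\[\tfrac12\tfrac{d}{dt}\|u\|_{g_0^2}^2+D_1\|\mathcal{L}_0^{1/2}u\|_{g_0^2}^2=D_1\big(\partial_x L_1^0,\partial_x u\big)_{g_0^2}+D_1\alpha_1\big((u-L_1^0)\partial_x(\phi-L_3^0),\partial_x u\big)_{g_0^2},\]
and the $L^\infty$ bounds on $u$ and on $\partial_x(\phi-L_3^0)$ together with Young's inequality bound the right-hand side by $\tfrac12 D_1\|\mathcal{L}_0^{1/2}u\|_{g_0^2}^2+C$; integrating in time, $\int_t^{t+1}\|\mathcal{L}_0^{1/2}u\|_{g_0^2}^2\,ds\le C$, and likewise for $v$.

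For the $H^1$ estimate I pair the $u$-equation with $\mathcal{L}_0 u$ in $(\cdot,\cdot)_{g_0^2}$, obtaining
\[\tfrac12\tfrac{d}{dt}\|\mathcal{L}_0^{1/2}u\|_{g_0^2}^2+D_1\|\mathcal{L}_0 u\|_{g_0^2}^2=-D_1\Big(\tfrac{1}{g_0^2}\partial_x(g_0^2\partial_x L_1^0),\mathcal{L}_0 u\Big)_{g_0^2}-D_1\alpha_1\Big(\tfrac{1}{g_0^2}\partial_x\big((u-L_1^0)g_0^2\partial_x(\phi-L_3^0)\big),\mathcal{L}_0 u\Big)_{g_0^2}.\]
Expanding the argument of the last term as a combination of $\partial_x(u-L_1^0)\,\partial_x(\phi-L_3^0)$ and $(u-L_1^0)\,\mathcal{L}_0(\phi-L_3^0)$, and bounding the first by $\|\partial_x(\phi-L_3^0)\|_\infty\|\partial_x(u-L_1^0)\|_2\|\mathcal{L}_0 u\|_2$ and the second by $\|u-L_1^0\|_\infty\|\mathcal{L}_0(\phi-L_3^0)\|_2\|\mathcal{L}_0 u\|_2$, Young's inequality absorbs a fraction of $D_1\|\mathcal{L}_0 u\|_{g_0^2}^2$ and leaves a remainder bounded by $C\|\mathcal{L}_0^{1/2}u\|_{g_0^2}^2+C$; the same holds for $v$. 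Adding the two inequalities gives
\[\tfrac{d}{dt}\big(\|\mathcal{L}_0^{1/2}u\|_{g_0^2}^2+\|\mathcal{L}_0^{1/2}v\|_{g_0^2}^2\big)+C_3\big(\|\mathcal{L}_0 u\|_{g_0^2}^2+\|\mathcal{L}_0 v\|_{g_0^2}^2\big)\le C_1\big(\|\mathcal{L}_0^{1/2}u\|_{g_0^2}^2+\|\mathcal{L}_0^{1/2}v\|_{g_0^2}^2\big)+C_2.\]
Feeding the integral bound of the previous step into the uniform Gronwall lemma yields $\|\mathcal{L}_0^{1/2}u(t)\|_{g_0^2}^2+\|\mathcal{L}_0^{1/2}v(t)\|_{g_0^2}^2\le C$ for all $t\ge1$, which by the norm equivalence is the first assertion. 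For the second assertion I apply the ordinary Gronwall lemma to the same differential inequality on $[0,1]$, using $\|(u_0,v_0)\|_{H^1\times H^1}\le R$ to bound the data and the integrability of the forcing on $[0,1]$, and then combine with the $t\ge1$ bound.

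The only point that must be carried out in the right order — and the closest thing to an obstacle — is the drift term $\tfrac{1}{g_0^2}\partial_x\big((u-L_1^0)g_0^2\partial_x(\phi-L_3^0)\big)$ tested against $\mathcal{L}_0 u$, whose expansion contains $(u-L_1^0)\,\mathcal{L}_0(\phi-L_3^0)$ and therefore needs the uniform $\mathcal{L}_0$- (equivalently $H^1$/$H^2$-) bound for $\phi$; hence the elliptic bound on $\phi$ must precede the $H^1$ energy estimate. Compared with the three-dimensional Lemma~\ref{lemma32}, no $L^3$--$L^6$ H\"older interpolation is required here, since in one space dimension the $L^\infty$ bound on $\partial_x(\phi-L_3^0)$ follows directly from $\phi\in H^2$; once that is in hand, the remaining computations are routine.
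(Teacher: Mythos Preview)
Your proposal is correct and follows precisely the route the paper intends: the paper states Lemma~\ref{lemma34} as the one-dimensional analogue of Lemma~\ref{lemma32} and omits the proof, and you have carried out exactly that analogue (together with the preliminary Lemma~\ref{lemma31}-type integral bound), with the expected simplification that in one space dimension the embedding $H^2(0,1)\hookrightarrow W^{1,\infty}(0,1)$ replaces the $L^3$--$L^6$ interpolation used in~\eqref{pflemma325}. The ordering you flag---establish the elliptic $H^2$ bound on $\phi$ first, then test against $\mathcal{L}_0 u$---is indeed the correct one and mirrors the role of~\eqref{pflemma321} in the proof of Lemma~\ref{lemma32}.
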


Next, we establish   estimates on time derivatives of  solutions
for both the three-dimensional system and the one-dimensional limiting system.
\begin{lem}\label{lemma35}
There exists $\epsilon_1>0$ such that for any $R>0$, there exists $K$ depending
only on $R$ such that for any $0<\epsilon \le \epsilon_1$ and $(u_0, v_0) \in \Sigma_\epsilon$
with $\|(u_0, v_0) \|_{X_\epsilon \times X_\epsilon} \le R$, the
solution $(u,v)$ of problem \eqref{PNP}-\eqref{IC} satisfies
\[t^2 \left ( \| {\mathcal{L}}_\epsilon  {\frac {\partial \phi}{\partial t}} \|^2_{H_\epsilon}
+ \|{\frac {\partial u}{\partial t}} \|^2_{H_\epsilon} +
\|{\frac {\partial v}{\partial t}} \|^2_{H_\epsilon} \right )
+ \int_0^t s^2 \left (\|{\frac {\partial u}{\partial s}} \|^2_{X_\epsilon} +
\|{\frac {\partial v}{\partial s}} \|^2_{X_\epsilon}\right ) ds
\le Ke^{Kt}, \quad t \ge 0.\]
\end{lem}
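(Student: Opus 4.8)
The plan is to differentiate the abstract system \eqref{absPNP} formally in $t$ and run energy estimates on the time-differentiated equations, exactly mirroring the hierarchy built in Lemmas \ref{lemma31}--\ref{lemma32}, but now carrying the weight $t^2$ (or equivalently first establishing a $t^1$-weighted bound as an intermediate step). Set $p=\partial_t\phi$, $a=\partial_t u$, $b=\partial_t v$. Differentiating the first (elliptic) equation gives ${\mathcal L}_\epsilon p=\lambda\alpha_1 a-\lambda\alpha_2 b$ (the $L^\epsilon_k$ terms are time-independent, so they drop), whence by \eqref{lhalfbd} one gets $\|{\mathcal L}_\epsilon p\|_{H_\epsilon}\le C(\|a\|_{H_\epsilon}+\|b\|_{H_\epsilon})$ and, taking the $H_\epsilon$ inner product with $p$, also $\|{\mathcal L}^{1/2}_\epsilon p\|_{H_\epsilon}\le C(\|a\|_{H_\epsilon}+\|b\|_{H_\epsilon})$. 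Differentiating the second equation in \eqref{absPNP} yields
\[
\partial_t a+D_1{\mathcal L}_\epsilon a=-\frac{D_1\alpha_1}{g^2}\nabla\cdot\Big(a\,g^2JJ^\tau\nabla(\phi-L^\epsilon_3)+(u-L^\epsilon_1)\,g^2JJ^\tau\nabla p\Big),
\]
and symmetrically for $b$. Testing this against $a$ in $H_\epsilon$ produces $\frac12\frac{d}{dt}\|a\|^2_{H_\epsilon}+D_1\|{\mathcal L}^{1/2}_\epsilon a\|^2_{H_\epsilon}$ on the left and, on the right, after integrating by parts, two bilinear/trilinear terms in $(a,\nabla(\phi-L^\epsilon_3),{\mathcal L}^{1/2}_\epsilon a)$ and $((u-L^\epsilon_1),\nabla p,{\mathcal L}^{1/2}_\epsilon a)$. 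These are controlled by the $L^\infty$ bound on $u$ from \eqref{Linfbd}, the already-established bounds $\|{\mathcal L}_\epsilon\phi\|_{H_\epsilon}, \|u\|_{X_\epsilon}, \|v\|_{X_\epsilon}\le K$ from \eqref{alltimeh1bd}, and the $p$-estimates above, using the $\epsilon$-uniform Sobolev/interpolation inequalities (the same $L^3$--$L^6$ Gagliardo--Nirenberg type estimates used in \eqref{pflemma325}, valid uniformly in $\epsilon$ because of \eqref{g_inequality}). The $\|a\|_\infty$-type factor that would appear in the analogue of \eqref{pflemma326} is the one place extra care is needed: I would instead keep $a$ in $L^6$ (or $L^4$ in 3D) and use $\nabla\cdot(g^2JJ^\tau\nabla(\phi-L^\epsilon_3))/g^2$ in $H_\epsilon$ together with $\|a\|_{L^6}\lesssim\|a\|_{X_\epsilon}$, absorbing a small multiple of $\|{\mathcal L}^{1/2}_\epsilon a\|^2_{H_\epsilon}$ and of $\|{\mathcal L}_\epsilon a\|^2$ where available, so as never to require second-order control on $a$.

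The net differential inequality will have the schematic shape
\[
\frac{d}{dt}\big(\|a\|^2_{H_\epsilon}+\|b\|^2_{H_\epsilon}\big)+C_3\big(\|{\mathcal L}^{1/2}_\epsilon a\|^2_{H_\epsilon}+\|{\mathcal L}^{1/2}_\epsilon b\|^2_{H_\epsilon}\big)\le \Phi(t)\big(\|a\|^2_{H_\epsilon}+\|b\|^2_{H_\epsilon}\big)+C,
\]
where $\Phi(t)$ is bounded by a constant on $[0,\infty)$ (using \eqref{alltimeh1bd}). Multiplying through by $t^2$ and handling the commutator term $\frac{d}{dt}(t^2(\cdots))=t^2\frac{d}{dt}(\cdots)+2t(\cdots)$, the extra $2t(\|a\|^2+\|b\|^2)$ is absorbed into the Gronwall exponent; integrating in time and using Young's inequality to turn $\|{\mathcal L}^{1/2}_\epsilon a\|^2_{H_\epsilon}$ into the $X_\epsilon$-norm via \eqref{lhalfbd} gives
\[
t^2\big(\|a(t)\|^2_{H_\epsilon}+\|b(t)\|^2_{H_\epsilon}\big)+\int_0^t s^2\big(\|a(s)\|^2_{X_\epsilon}+\|b(s)\|^2_{X_\epsilon}\big)\,ds\le Ke^{Kt}.
\]
The weight $t^2$ (vanishing at $t=0$) is exactly what removes the need for compatibility conditions on the initial data $(u_0,v_0)\in\Sigma_\epsilon$, which a priori lie only in $X_\epsilon\times X_\epsilon$ and need not satisfy the boundary conditions for $\partial_t u$; this is the standard parabolic smoothing device. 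Finally the bound on $\|{\mathcal L}_\epsilon\partial_t\phi\|_{H_\epsilon}$ follows from the differentiated elliptic equation and the just-proved bound on $\|a\|_{H_\epsilon}+\|b\|_{H_\epsilon}$.

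To make the formal differentiation rigorous I would either work with difference quotients $\delta_\tau^hw=(w(t+h)-w(t))/h$ in place of $\partial_t w$ — the equations for the difference quotients have the same structure with coefficients frozen in time, so all the estimates above go through with constants uniform in $h$, and then one passes to the limit $h\to0$ — or invoke standard parabolic regularity for the semilinear system (the nonlinearity is smooth in its arguments and the $X_\epsilon$-bound \eqref{alltimeh1bd} places the solution in a space where classical bootstrap applies). I expect the main obstacle to be bookkeeping the $\epsilon$-uniformity of every interpolation constant in the trilinear estimates: one must check that the Gagliardo--Nirenberg and Sobolev embedding constants used on $\Omega$ with the anisotropic norm $\|\cdot\|_{X_\epsilon}$ are controlled independently of $\epsilon$, which is precisely the content of \eqref{g_inequality} and \eqref{honebd} and was already exploited in Lemma \ref{lemma32}; reusing that machinery verbatim is what keeps the constant $K$ independent of $\epsilon$. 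The rest is a routine, if lengthy, Gronwall argument.
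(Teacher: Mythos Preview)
Your approach is close to the paper's, but there is a real gap in how you handle the commutator term. After differentiating in $t$, the right-hand side of the $a$-equation is at least linear in $(a,b,p)$ (the time-independent $L^\epsilon_k$ terms drop out), so the unweighted energy inequality you write actually has \emph{no} constant source term; it reads
\[
\frac{d}{dt}\big(\|a\|^2_{H_\epsilon}+\|b\|^2_{H_\epsilon}\big)+C_3\big(\|a\|^2_{X_\epsilon}+\|b\|^2_{X_\epsilon}\big)\le C\big(\|a\|^2_{H_\epsilon}+\|b\|^2_{H_\epsilon}\big).
\]
Multiplying by $t^2$ produces the commutator $2t(\|a\|^2+\|b\|^2)$, and your plan to ``absorb it into the Gronwall exponent'' fails: the coefficient $2/t$ is not integrable at $t=0$, and $\|a(0)\|^2_{H_\epsilon}$ is not a priori finite (precisely the compatibility issue the weight is supposed to cure). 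Running Gronwall formally on $y(t)=t^2(\|a\|^2+\|b\|^2)$ with $y(0)=0$ and \emph{no source} gives $y\equiv0$, which is nonsense; the inequality simply does not hold down to $t=0$.

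The paper's fix --- its one genuinely new ingredient beyond Lemma~\ref{lemma32} --- is to bound the commutator differently. One tests the \emph{undifferentiated} second equation of \eqref{absPNP} against $t\,\tilde u$: this gives
\[
t\|\tilde u\|^2_{H_\epsilon}=(\partial_t u,\,t\tilde u)_{H_\epsilon}\le C\,\| \mathcal{L}_\epsilon^{1/2}(t\tilde u)\|_{H_\epsilon}
\]
using only the already-known $X_\epsilon$-bounds \eqref{alltimeh1bd} on $(u,\phi)$, hence $t\|\tilde u\|^2_{H_\epsilon}\le \tfrac18 D_1\|\mathcal{L}_\epsilon^{1/2}(t\tilde u)\|^2_{H_\epsilon}+C$. \emph{This} is where the constant ``$+C$'' legitimately appears --- in the $t$-weighted inequality, not the unweighted one --- and with it the Gronwall argument closes exactly as you describe. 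The rest of your sketch (the trilinear $L^3$--$L^6$--$L^2$ estimates, $\epsilon$-uniform Sobolev constants via \eqref{g_inequality}, difference-quotient justification, and the final recovery of $\|\mathcal{L}_\epsilon\partial_t\phi\|_{H_\epsilon}$ from the differentiated elliptic equation) is correct.
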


\begin{proof} Denote by
 \[{\tilde{\phi}} = {\frac {\partial \phi}{\partial t}}, \quad
 {\tilde{u}} = {\frac {\partial u}{\partial t}}, \quad
 {\tilde{v}} = {\frac {\partial v}{\partial t}}.\]
  Differentiating \eqref{absPNP} with respect to $t$, we get
  \begin{align*}
  \begin{split}
 {\mathcal{L}}_\epsilon {\tilde{\phi}}
  & = \lambda \alpha_1 {\tilde{u}} - \lambda
 \alpha_2 {\tilde{v}} ,\\
   \frac{\partial {\tilde{u}}}{\partial t} + D_1 {\mathcal{L}}_\epsilon {\tilde{u}} &= - \frac{D_1}{g^2}\nabla \cdot
   \left(  \alpha_1 {\tilde{u}}   g^2 JJ^\tau \nabla (\phi -L_3^\epsilon)
   + \alpha_1 (u-L_1^\epsilon) g^2 JJ^\tau \nabla{\tilde{\phi}}
   \right ) ,\\
    \frac{\partial {\tilde{v}}}{\partial t} + D_2 {\mathcal{L}}_\epsilon {\tilde{v}} &= - \frac{D_2}{g^2}\nabla \cdot
   \left(  \alpha_2 {\tilde{v}}   g^2 JJ^\tau \nabla (\phi -L_3^\epsilon)
   + \alpha_2 (v-L_2^\epsilon) g^2 JJ^\tau \nabla{\tilde{\phi}}
   \right ) .
 \end{split}
 \end{align*}
 From the above system,  one derives
  \begin{align}
  \label{pflemma351}
  \begin{split}
 {\mathcal{L}}_\epsilon (t{\tilde{\phi}})
  & = \lambda \alpha_1 t {\tilde{u}} - \lambda
 \alpha_2  t {\tilde{v}} ,\\
   \frac{\partial  }{\partial t} (t{\tilde{u}} )  + D_1 {\mathcal{L}}_\epsilon (t {\tilde{u}})
    &={\tilde{u}} - \frac{D_1}{g^2}\nabla \cdot
   \left(  \alpha_1 (t {\tilde{u}} )  g^2 JJ^\tau \nabla (\phi -L_3^\epsilon)
   + \alpha_1 (u-L_1^\epsilon) g^2 JJ^\tau \nabla (t {\tilde{\phi}})
   \right ) ,\\
    \frac{\partial  }{\partial t} (t{\tilde{v}})
     + D_2 {\mathcal{L}}_\epsilon  (t {\tilde{v}}) &= {\tilde{v}}
     + \frac{D_2}{g^2}\nabla \cdot
   \left(  \alpha_2  (t {\tilde{v}})   g^2 JJ^\tau \nabla (\phi -L_3^\epsilon)
   + \alpha_2 (v-L_2^\epsilon) g^2 JJ^\tau \nabla (t {\tilde{\phi}})
   \right ) .
 \end{split}
 \end{align}
 The first equation in \eqref{pflemma351}  gives
  \begin{equation}
 \label{pflemma352}
 \| {\mathcal{L}}_\epsilon (t \tilde{\phi}) \|_{H_\epsilon}
 \le C \left (
  \| t {\tilde{u}} \|_{H_\epsilon} +
  \| t {\tilde{v}} \|_{H_\epsilon} \right ).
  \end{equation}
  Taking the inner product of the second equation
  in \eqref{pflemma351} with ${ t \tilde{u}}$ in $H_\epsilon$, we have
  \begin{align}
  \label{pflemma353}
  \begin{split}
  {\frac 12} {\frac d{dt}} \| {t \tilde{u}} \|^2_{H_\epsilon}
  + D_1 \| {\mathcal{L}}_\epsilon^{\frac 12} ({t \tilde{u}}) \|^2_{H_\epsilon}
  = & D_1 \alpha_1 \int {\frac {g^2}{\epsilon^2}} {t \tilde{u}}
  J^\tau \nabla (\phi- L_3^\epsilon) \cdot J^\tau \nabla ({t \tilde{u}})\\
  & + D_1 \alpha_1 \int {\frac {g^2}{\epsilon^2}} (u-L_1^\epsilon) J^\tau \nabla ({t \tilde{\phi}}) \cdot
  J^\tau \nabla ({t \tilde{u}})
  + t \| {\tilde{u}} \|^2_{H_\epsilon}.
  \end{split}
 \end{align}
  By \eqref{alltimeh1bd}, the first term on the right-hand side
 of \eqref{pflemma353} is bounded by
 \begin{align}
 \label{pflemma354}
 \begin{split}
  C \|{t \tilde{u}} \|_3 \| J^\tau \nabla (\phi -L_3^\epsilon) \|_6 \| J^\tau \nabla ({t \tilde{u}})   \|_2
 &  \le C  \|{t \tilde{u}} \|_2^{\frac 12}   \|{t \tilde{u}} \|^{\frac 12}_{H^1} \| J^\tau \nabla (\phi - L_3^\epsilon) \|_{H^1}\| J^\tau \nabla (t \tilde{u}) \|_2 \\
 & \le  C  \|{t \tilde{u}} \|_{H_\epsilon}^{\frac 12}
 \| {\mathcal{L}}^{\frac 12}_\epsilon ({t \tilde{u}}) \|^{\frac 32}_{H_\epsilon}
 \left (  \| {\mathcal{L}}_\epsilon \phi \|_{H_\epsilon}
  + \| {\mathcal{L}}_\epsilon L_3^\epsilon \|_{H_\epsilon}   \right )\\
    & \le  C  \|{t \tilde{u}} \|_{H_\epsilon}^{\frac 12}
    \| {\mathcal{L}}^{\frac 12}_\epsilon ({t \tilde{u}}) \|^{\frac 32}_{H_\epsilon}
 \le {\frac 18} D_1 \| {\mathcal{L}}^{\frac 12}_\epsilon ({t \tilde{u}}) \|^{2}_{H_\epsilon}
 + C \| t {\tilde{u}} \|^2_{H_\epsilon}.
   \end{split}
 \end{align}
 By \eqref{pflemma352}, the second term on the right-hand side of \eqref{pflemma353} is less than
 \begin{align}
 \label{pflemma355}
 \begin{split}
 C \| u -L_1^\epsilon \|_\infty \| J^\tau \nabla (t \tilde{\phi}) \|_2   \| J^\tau \nabla (t \tilde{u}) \|_2
 & \le {\frac 18} D_1 \| {\mathcal{L}}_\epsilon^{\frac 12} (t \tilde{u}) \|^2_{H_\epsilon}
 + C \| {\mathcal{L}}_\epsilon^{\frac 12}  (t \tilde{\phi})  \|^2_{H_\epsilon}\\
  & \le {\frac 18} D_1 \| {\mathcal{L}}_\epsilon^{\frac 12} (t \tilde{u}) \|^2_{H_\epsilon}
 + C \left ( \| t \tilde{u} \|^2_{H_\epsilon} +  \| t \tilde{v} \|^2_{H_\epsilon}  \right ).
 \end{split}
 \end{align}
 Multiplying the second equation in \eqref{absPNP} by $t\tilde{u}$, after simple computations, we find
 that the last term on the right-hand side of \eqref{pflemma353}
 satisfies
 \begin{align}
 \label{pflemma356}
 \begin{split}
 t \| \tilde{u} \|^2_{H_\epsilon} \le &
  C \| {\mathcal{L}}_\epsilon^{\frac 12} (t \tilde{u}) \|_{H_\epsilon}
  \left ( \| {\mathcal{L}}_\epsilon^{\frac 12} u \|_{H_\epsilon} +
   \| {\mathcal{L}}_\epsilon^{\frac 12} L_1^\epsilon \|_{H_\epsilon}  +
   \| {\mathcal{L}}_\epsilon^{\frac 12} \phi \|_{H_\epsilon}
  + \| {\mathcal{L}}_\epsilon^{\frac 12} L_3^\epsilon \|_{H_\epsilon}    \right )\\
   \le & {\frac 18} D_1 \| {\mathcal{L}}_\epsilon^{\frac 12} (t \tilde{u}) \|^2_{H_\epsilon}+ C.
 \end{split}
    \end{align}
 Combining the estimates   in \eqref{pflemma353}-\eqref{pflemma356}, we get
 \begin{equation}\label{pflemma357}
    {\frac d{dt}} \| t {\tilde{u}} \|^2_{H_\epsilon} + D_1 \| {\mathcal{L}}_\epsilon^{\frac 12} (t \tilde{u}) \|^2_{H_\epsilon}\le C   \left ( \| t \tilde{u} \|^2_{H_\epsilon} +
 \| t \tilde{v} \|^2_{H_\epsilon}  \right ) +C.
 \end{equation}
 Similarly,
 \begin{equation}
    \label{pflemma358}
    {\frac d{dt}} \| t {\tilde{v}} \|^2_{H_\epsilon} + D_1 \| {\mathcal{L}}_\epsilon^{\frac 12} (t \tilde{v}) \|^2_{H_\epsilon} \le C   \left ( \| t \tilde{u} \|^2_{H_\epsilon} +
 \| t \tilde{v} \|^2_{H_\epsilon}  \right ) +C.
 \end{equation}
 Finally, from \eqref{pflemma357}-\eqref{pflemma358}, we have
 \[ {\frac d{dt}}  \left (\| t {\tilde{u}} \|^2_{H_\epsilon} +\| t {\tilde{v}} \|^2_{H_\epsilon}  \right )
  + C_1 \left ( \| {\mathcal{L}}_\epsilon^{\frac 12} (t \tilde{u}) \|^2_{H_\epsilon}
  +   \| {\mathcal{L}}_\epsilon^{\frac 12} (t \tilde{v}) \|^2_{H_\epsilon}\right )
    \le C   \left (\| t \tilde{u} \|^2_{H_\epsilon} + \| t \tilde{v} \|^2_{H_\epsilon}  \right ) +C,\]
 which, along with   Gronwall's lemma, concludes the proof.
 \end{proof}

We now describe the analogue of Lemma \ref{lemma35} for the one-dimensional
limiting system \eqref{limPNP}-\eqref{limIC}.
\begin{lem}\label{lemma36}
Given $R>0$, there exists $K$ depending
only on $R$ such that for any  $(u_0, v_0) \in \Sigma_0$
with $\|(u_0, v_0) \|_{H^1 \times H^1} \le R$, the
solution $(u,v)$ of  problem \eqref{limPNP}-\eqref{limIC} satisfies
\[t^2 \left ( \|    {\frac {\partial \phi}{\partial t}} \|^2_{H^2}
 +\|{\frac {\partial u}{\partial t}} \|^2_{2} + \|{\frac {\partial v}{\partial t}} \|^2_{2} \right )
+ \int_0^t s^2 \left (\|{\frac {\partial u}{\partial s}} \|^2_{H^1} + \|{\frac {\partial v}{\partial s}} \|^2_{H^1}\right ) ds\le Ke^{Kt}, \quad t \ge 0.\]
\end{lem}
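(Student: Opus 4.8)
The plan is to transcribe the argument of Lemma~\ref{lemma35} to the one-dimensional setting, where the analysis is in fact simpler because the Sobolev embedding $H^1(0,1)\hookrightarrow L^\infty(0,1)$ together with the one-dimensional Gagliardo--Nirenberg inequality $\|w\|_\infty\le C\|w\|_2^{1/2}\|w\|_{H^1}^{1/2}$ replaces the more delicate $L^3$--$L^6$ H\"older and interpolation estimates used in three dimensions. Introduce on $L^2(0,1)$ the weighted inner product $(v,w)_{H_0}=\int_0^1 g_0^2\,vw\,dx$ and on $H^1_0(0,1)$ the self-adjoint operator $\mathcal L_0 w=-g_0^{-2}\partial_x(g_0^2\partial_x w)$, so that $\|\mathcal L_0^{1/2}w\|_{H_0}^2=\int_0^1 g_0^2|w_x|^2\,dx$; since $g_0$ is smooth and bounded above and below by positive constants on $[0,1]$ by \eqref{gC}, the norm $\|\mathcal L_0^{1/2}w\|_{H_0}$ is equivalent to $\|w\|_{H^1}$ on $H^1_0(0,1)$ and $\|w\|_{H_0}$ is equivalent to $\|w\|_2$. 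After the homogenization of Section~3.1, system \eqref{limPNP} reads $\partial_t u+D_1\mathcal L_0 u=(\text{lower order})$, $\partial_t v+D_2\mathcal L_0 v=(\text{lower order})$, together with $\mathcal L_0\phi=\lambda\alpha_1(u-L_1^0)-\lambda\alpha_2(v-L_2^0)$.

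First I would set $\tilde\phi=\partial_t\phi$, $\tilde u=\partial_t u$, $\tilde v=\partial_t v$, differentiate the system in $t$, and then multiply through by $t$ to obtain evolution equations for $t\tilde u$, $t\tilde v$ carrying the extra inhomogeneities $\tilde u$, $\tilde v$, exactly as in \eqref{pflemma351}. Since $\phi-L_3^0$ vanishes at $x=0,1$ and $L_3^0$ is independent of $t$, $\tilde\phi$ carries homogeneous Dirichlet data, so elliptic regularity for $\mathcal L_0$ on $(0,1)$ gives $\|t\tilde\phi\|_{H^2}\le C(\|t\tilde u\|_2+\|t\tilde v\|_2)$, hence in particular $\|\mathcal L_0^{1/2}(t\tilde\phi)\|_{H_0}^2\le C(\|t\tilde u\|_{H_0}^2+\|t\tilde v\|_{H_0}^2)$, the analogue of \eqref{pflemma352}. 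Next I would take the $H_0$ inner product of the $t\tilde u$ equation with $t\tilde u$. Upon integration by parts the first nonlinear term is $\int_0^1 g_0^2\,(t\tilde u)(\phi-L_3^0)_x(t\tilde u)_x\,dx$, bounded by $\|t\tilde u\|_\infty\|(\phi-L_3^0)_x\|_2\|(t\tilde u)_x\|_2\le C\|t\tilde u\|_{H_0}^{1/2}\|\mathcal L_0^{1/2}(t\tilde u)\|_{H_0}^{3/2}$ via the Gagliardo--Nirenberg step and $\|(\phi-L_3^0)_x\|_2\le\|\phi\|_{H^1}+\|L_3^0\|_{H^1}\le C$, the $H^1$ bound on $\phi$ coming from the Poisson equation and Lemma~\ref{lemma34}; Young's inequality then absorbs this into $\frac{1}{8}D_1\|\mathcal L_0^{1/2}(t\tilde u)\|^2_{H_0}+C\|t\tilde u\|^2_{H_0}$, mirroring \eqref{pflemma354}. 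The second nonlinear term $\int_0^1 g_0^2\,(u-L_1^0)(t\tilde\phi)_x(t\tilde u)_x\,dx$ is bounded by $\|u-L_1^0\|_\infty\|(t\tilde\phi)_x\|_2\|(t\tilde u)_x\|_2$ with $\|u-L_1^0\|_\infty\le C$ by Lemma~\ref{lemma34} and $H^1\hookrightarrow L^\infty$; combined with the elliptic bound on $\|(t\tilde\phi)_x\|_2$ and Young, it contributes $\frac{1}{8}D_1\|\mathcal L_0^{1/2}(t\tilde u)\|^2_{H_0}+C(\|t\tilde u\|^2_{H_0}+\|t\tilde v\|^2_{H_0})$, as in \eqref{pflemma355}. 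The remaining term $t\|\tilde u\|^2_{H_0}$ is handled by replacing one factor of $\tilde u$ using the $u$-equation, $t\|\tilde u\|^2_{H_0}=t(\partial_t u,\tilde u)_{H_0}=-D_1(\mathcal L_0^{1/2}u,\mathcal L_0^{1/2}(t\tilde u))_{H_0}+\cdots$, every term of which is $\le\frac{1}{8}D_1\|\mathcal L_0^{1/2}(t\tilde u)\|^2_{H_0}+C$ by Lemma~\ref{lemma34}, exactly as in \eqref{pflemma356}.

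Collecting these bounds yields $\frac{d}{dt}\|t\tilde u\|^2_{H_0}+D_1\|\mathcal L_0^{1/2}(t\tilde u)\|^2_{H_0}\le C(\|t\tilde u\|^2_{H_0}+\|t\tilde v\|^2_{H_0})+C$ and the same for $t\tilde v$; adding them and applying Gronwall's lemma (with the constant absorbed) gives $\|t\tilde u\|^2_{H_0}+\|t\tilde v\|^2_{H_0}\le Ke^{Kt}$, and integrating the differential inequality in $t$ furnishes the bound on $\int_0^t s^2(\|\partial_s u\|^2_{H^1}+\|\partial_s v\|^2_{H^1})\,ds$. Passing back through the norm equivalences, and using $\|t\tilde\phi\|_{H^2}\le C(\|t\tilde u\|_2+\|t\tilde v\|_2)$ for the potential term, finishes the proof. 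The one point needing care is the a priori regularity required to differentiate the system in $t$ and to render $\partial_t u(t)$, $\partial_t v(t)$ meaningful for $t>0$; as in Lemma~\ref{lemma35}, this is supplied by the parabolic smoothing of the $(u,v)$ pair, and the weight $t^2$, which vanishes at $t=0$, means no compatibility condition on the initial data is required. I expect this regularization/justification step, rather than the estimates themselves, to be the only genuine obstacle, since every nonlinear bound here is strictly easier than its three-dimensional counterpart in Lemma~\ref{lemma35}.
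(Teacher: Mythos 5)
Your proposal is correct and follows precisely the route the paper intends: the paper's ``proof'' of Lemma~\ref{lemma36} is the single sentence that it is similar to Lemma~\ref{lemma35} but simpler and therefore omitted, and your argument is exactly that transcription to one dimension, with the $L^3$--$L^6$ interpolation replaced by the one-dimensional $\|w\|_\infty\le C\|w\|_2^{1/2}\|w\|_{H^1}^{1/2}$ and the $H^2$ bound on $\partial_t\phi$ coming from 1D elliptic regularity for the time-differentiated Poisson equation $\mathcal{L}_0\tilde\phi=\lambda\alpha_1\tilde u-\lambda\alpha_2\tilde v$. You correctly identified the only delicate point (justifying differentiation in $t$, handled by parabolic smoothing plus the vanishing weight $t^2$), and there are no gaps.
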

\begin{proof}
The proof is similar to that of Lemma \ref{lemma35} but simpler, and therefore
omitted here. \end{proof}

\subsection{Upper Semicontinuity }
In this section, we establish the upper semicontinuity of global
attractors ${\mathcal{A}}_\epsilon$ at $\epsilon =0$. We first
compare the solutions of the three-dimensional  problem
\eqref{PNP}-\eqref{IC}  and  the one-dimensional limiting problem
\eqref{limPNP}-\eqref{limIC}, and then establish the relationships
between the global attractors of the two dynamical systems.

In what follows, we reformulate   limiting  system \eqref{limPNP}
as an operator equation. Let $H_0$ be the $L^2(0,1)$ space with
  the inner product $(\cdot,\cdot)_{H_0}$ given by
\[(u,v)_{H_0}=\int_0^1 g_0^2uv\,dx,\]
  and  let $a_0(\cdot,\cdot)$  be the   bilinear form on $\left(H^1_0
  (0,1)\right)^2$:
 \[a_0(w_1,w_2)= \left ({\frac {dw_1}{dx}}, {\frac {dw_2}{dx}} \right )_{H_0}
 =\int_0^1 g_0^2 \, {\frac {dw_1}{dx}} \, {\frac {dw_2}{dx}}  \,dx.\]
For $f \in L^2(\Omega)$, let $M(f) \in L^2(0,1)$ be the function:
\[(M(f))(x)= {\frac 1\pi} \int_{\bbD} f(x,y,z)\,dy\,dz.\]

\begin{lem}
\label{meanlemma} Suppose $f \in H^1(\Omega)$. Then we have
\[\|f - M(f) \|_{H_\epsilon} \le C \epsilon \| f\|_{X_\epsilon}.\]
\end{lem}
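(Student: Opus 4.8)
The plan is to establish a Poincaré-type inequality on the unit disk cross-sections and then integrate over $x\in(0,1)$. Fix $x\in(0,1)$ and consider the slice $f(x,\cdot,\cdot)$ as a function on $\bbD$. By definition, $M(f)(x)$ is the average of $f(x,\cdot,\cdot)$ over $\bbD$, so the classical Poincaré--Wirtinger inequality on the unit disk gives
\[
\int_{\bbD} |f(x,y,z)-M(f)(x)|^2\,dy\,dz \le C_0 \int_{\bbD} \left(|f_y(x,y,z)|^2+|f_z(x,y,z)|^2\right)dy\,dz,
\]
where $C_0$ is the reciprocal of the first nonzero Neumann eigenvalue of the Laplacian on $\bbD$, an absolute constant. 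First I would integrate this in $x$ over $(0,1)$ to obtain $\|f-M(f)\|_{L^2(\Omega)}^2 \le C_0\left(\|f_y\|_2^2+\|f_z\|_2^2\right)$.

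Next I would pass from the plain $L^2(\Omega)$ norm to the weighted norm $\|\cdot\|_{H_\epsilon}$. Since $\|w\|_{H_\epsilon}^2=\int_\Omega (g^2/\epsilon^2)\,|w|^2$ and, by the bounds \eqref{g_inequality}, $C_2\le g/\epsilon\le C_3$ uniformly for $0<\epsilon\le\epsilon_1$, the weight $g^2/\epsilon^2$ is bounded above and below by absolute constants. Hence
\[
\|f-M(f)\|_{H_\epsilon}^2 \le C_3^2 \|f-M(f)\|_{L^2(\Omega)}^2 \le C_3^2 C_0 \left(\|f_y\|_2^2+\|f_z\|_2^2\right) = C_3^2 C_0\,\epsilon^2\left(\frac1{\epsilon^2}\|f_y\|_2^2+\frac1{\epsilon^2}\|f_z\|_2^2\right),
\]
and the quantity in parentheses is bounded by $\|f\|_{X_\epsilon}^2$ by the definition of the $X_\epsilon$-norm. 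Taking square roots gives $\|f-M(f)\|_{H_\epsilon}\le C\epsilon\|f\|_{X_\epsilon}$ with $C=C_3\sqrt{C_0}$ independent of $\epsilon$, which is the claim.

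The only point requiring care is the application of the Poincaré inequality on the slice: this is valid for almost every $x$ provided $f(x,\cdot,\cdot)\in H^1(\bbD)$, which holds for a.e. $x$ by Fubini since $f\in H^1(\Omega)$; and one must check that $M(f)(x)$ is exactly the mean value that makes the Wirtinger constant applicable, which it is by construction. A subtlety worth noting is that I am deliberately discarding the $x$-derivative information — the inequality does not need it, since the cross-sectional Poincaré inequality alone supplies the factor $\epsilon^2$ through the $1/\epsilon^2$-weighted $y,z$-derivative terms in the $X_\epsilon$-norm. I do not expect a genuine obstacle here; the main thing to get right is tracking the constants so that they are manifestly $\epsilon$-independent, which the uniform two-sided bound on $g/\epsilon$ from \eqref{g_inequality} delivers.
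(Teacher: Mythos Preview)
Your proof is correct and follows the same underlying strategy as the paper: bound $f-M(f)$ slice by slice on the disk $\bbD$, then integrate in $x$ and absorb the weight $g^2/\epsilon^2$ using the uniform bounds \eqref{g_inequality}. The difference is purely in execution. You invoke the Poincar\'e--Wirtinger inequality on $\bbD$ as a known result, whereas the paper reproves it by hand: writing points of $\bbD$ in polar coordinates, it connects $(r\cos\theta,r\sin\theta)$ to $(\rho\cos\phi,\rho\sin\phi)$ along a radial segment followed by a circular arc, applies the fundamental theorem of calculus along that path to express $f-M(f)$ as explicit integrals of $f_y$ and $f_z$, and then estimates directly. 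Your route is shorter and more transparent; the paper's explicit argument has the minor advantage of being self-contained and making the constant computable in principle, but otherwise buys nothing extra.
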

\begin{proof}   Notice that
  \begin{align}\label{pfmean1}
 \|f - M(f) \|^2_2=& \int_{x=0}^1 \int_{\bbD}
 \left | f(x,y,z) - {\frac 1\pi}\int_{\bbD}  f(x, u,v) du dv \right |^2 dy dz dx.
  \end{align}
 Using the identity
  \begin{align*}
 f(x, r\cos\theta, r \sin \theta ) = &f(x, \rho \cos \phi, \rho \sin \phi ) - \int_{\theta}^\phi
 {\frac {\partial}{\partial t}}f(x, \rho \cos t, \rho \sin t ) dt\\
  &-     \int_r^{\rho} {\frac {\partial}{\partial \tau}} f(x, \tau \cos\theta, \tau \sin \theta ) d\tau,
 \end{align*}
one can write
\begin{align*}
 & {\frac 1\pi} \int_{r=0}^1 \int_{\theta=0}^{2 \pi} f(x, r\cos
 \theta, r \sin \theta ) r dr d \theta = f(x, \rho \cos \phi, \rho \sin \phi )\\
 & \qquad -{\frac 1\pi} \int_{r=0}^1 \int_{\theta=0}^{2 \pi}
 \left (\int_{t = \theta}^\phi \left (- \rho \sin t {\frac {\partial f}{\partial y}}(x, \rho \cos t, \rho \sin t )
 + \rho \cos t {\frac {\partial f}{\partial z}}(x, \rho \cos t, \rho \sin t )\right )dt \right ) r dr d \theta \\
 & \qquad -{\frac 1\pi} \int_{r=0}^1 \int_{\theta=0}^{2 \pi}
 \left ( \int_{\tau = r}^\rho \left (  \cos \theta{\frac {\partial f}{\partial y}}(x, \tau \cos\theta, \tau \sin \theta ) + \sin \theta{\frac {\partial f}{\partial z}}(x, \tau \cos \theta, \tau \sin \theta )\right )d\tau \right) r dr d \theta
  \end{align*}
  Then, after simple computations,
   Lemma \ref{meanlemma} follows from \eqref{pfmean1} and the above.
\end{proof}

 Let $(\psi,P,Q)\in ( {H}_D^1(\Omega))^3$ and let
$(\phi_{\epsilon},u_{\epsilon},v_{\epsilon})$ be a solution of system~\eqref{PNP}.   In view of
the boundary condition~\eqref{BV} and the choices of
$L_k^\epsilon$ for $k=1,2,3$, we have
\begin{align}\label{VP}\begin{split}
-a_{\epsilon}(\phi_{\epsilon}-L^\epsilon_3,\psi)=&-\lambda
\alpha_1(u_{\epsilon}-L_1^\epsilon,\psi)_{H_{\epsilon}}+
\lambda \alpha_2(v_{\epsilon}-L_2^\epsilon,\psi)_{H_{\epsilon}},\\
\frac{1}{D_1}\left(\frac{\partial u_{\epsilon}}{\partial t},
P\right)_{H_{\epsilon}}=&
-a_{\epsilon}(u_{\epsilon}-L_1^\epsilon,P)+\alpha_1\left((u_{\epsilon}-L_1^\epsilon){\mathcal
L}^{1/2}_{\epsilon}
(\phi_{\epsilon}-L_3^\epsilon), {\mathcal L}^{1/2}_{\epsilon}P\right)_{H_{\epsilon}},\\
\frac{1}{D_2}\left(\frac{\partial v_{\epsilon}}{\partial t},Q\right)_{H_{\epsilon}}
=&-a_{\epsilon}(v_{\epsilon}-L_2^\epsilon,Q)
  -\alpha_2\left((v_{\epsilon}-L_2^\epsilon){\mathcal L}^{1/2}_{\epsilon} (\phi_{\epsilon}-L_3^\epsilon), {\mathcal L}^{1/2}_{\epsilon}Q\right)_{H_{\epsilon}}.
\end{split}
\end{align}
Let $(\phi, u, v)$ be the solution of the limiting system~\eqref{limPNP}.
View $(\phi, u, v)$ as an element in $({H}^1_D(\Omega))^3$.
Then a  direct computation yields that, for
  $(\psi,P,Q)\in ({H}^1_D(\Omega))^3$,
  \begin{align}
\label{one2three}
\begin{split}
-a_{\epsilon}(\phi-L^0_3,\psi)=&-\lambda\alpha_1(u-L^0_1,\psi)_{H_{\epsilon}}
+\lambda\alpha_2(v-L^0_2,\psi)_{H_{\epsilon}}+F(\phi-L_3^0,\psi),\\
\frac{1}{D_1}\left(\frac{\partial u}{\partial t},P\right)_{H_{\epsilon}}
=&- a_{\epsilon}(u-L^0_1,P)+\alpha_1\left((u-L^0_1){\mathcal L}^{1/2}_{\epsilon}(\phi-L^0_3),
 {\mathcal L}^{1/2}_{\epsilon} P\right)_{H_{\epsilon}}\\
 & +G_1(u-L_1^0,\phi-L_3^0,P),\\
\frac{1}{D_2}\left(\frac{\partial v}{\partial t},Q\right)_{H_{\epsilon}}
=&- a_{\epsilon}(v-L^0_2,Q)-\alpha_2\left((v-L^0_2){\mathcal L}^{1/2}_{\epsilon}(\phi-L^0_3),
{\mathcal L}^{1/2}_{\epsilon} Q\right)_{H_{\epsilon}}\\
&+G_2(v-L_2^0,\phi-L_3^0,Q),
\end{split}
\end{align}
where, for appropriate functions $p$, $q$ and $r$, and for $i=1,2$,
\begin{align}\label{fg}\begin{split}
F(p, q)=& \left(\left(\frac{\partial_x g^2}{g^2}
-\frac{\partial_x
g_0^2}{g_0^2}\right) p_x,q\right)_{H_{\epsilon}}
  +\left(\frac{g_x}{g} p_x,yq_y+zq_z\right)_{H_{\epsilon}},\\
G_i(p, q, r)=&  -\left(\left(\frac{\partial_x g^2}{g^2}
  -\frac{\partial_x g_0^2}{g_0^2}\right)p_x,r\right)_{H_{\epsilon}}
  -\left(\frac{g_x}{g} p_x,yr_y+zr_z\right)_{H_{\epsilon}}\\
&+(-1)^{i+1}\alpha_i\left(\left(\frac{\partial_x g^2}{g^2}
  -\frac{\partial_x g_0^2}{g_0^2}\right) p
    q_x,r\right)_{H_{\epsilon}}+(-1)^{i+1}\alpha_i\left(\frac{g_x}{g} p
 q_x,yr_y+zr_z\right)_{H_{\epsilon}}.
 \end{split}
\end{align}

 Let
\begin{equation}
\label{diffvar}
\psi^{\epsilon}=\phi_{\epsilon}-L_3^\epsilon-(\phi-L^0_3), \;
P^{\epsilon}=u_{\epsilon}-L_1^\epsilon-(u -L^0_1),\;
Q^{\epsilon}=v_{\epsilon}-L_2^\epsilon-(v -L^0_2).
\end{equation}

 Upon subtracting \eqref{one2three} from \eqref{VP}, we obtain that
for any $(\psi, P, Q) \in H^1_D(\Omega)^3$,
\begin{align}
  a_{\epsilon}(\psi^{\epsilon},\psi)
  =& \lambda\alpha_1(P^{\epsilon},\psi)_{H_{\epsilon}}
  - \lambda\alpha_2 (Q^{\epsilon},\psi)_{H_{\epsilon}}+F(\phi-L_3^0,\psi), \label{diffeq1} \\
   \frac{1}{D_1}\left(\partial_t P^{\epsilon}, P \right)_{H_{\epsilon}}
 =& -a_{\epsilon}( P^{\epsilon}, P )+
 \alpha_1\left((u_{\epsilon}-L_1^\epsilon) {\mathcal{L}}^{\frac 12}_{\epsilon}  \psi^{\epsilon},
 {\mathcal{L}}^{\frac 12}_{\epsilon} P \right)_{H_{\epsilon}}\nonumber\\
&+\alpha_1\left(P^{\epsilon}
 {\mathcal{L}}^{\frac 12}_{\epsilon} (\phi-L^0_3),
  {\mathcal{L}}^{\frac 12}_{\epsilon} P
   \right)_{H_{\epsilon}}-G_1(u-L_1^0,\phi-L_3^0,P),\label{diffeq2} \\
\frac{1}{D_2}\left(\partial_t Q^{\epsilon},Q \right)_{H_{\epsilon}}
=& -a_{\epsilon}(Q^{\epsilon},Q ) -\alpha_2\left((v_{\epsilon}-L_2^\epsilon)
{\mathcal{L}}^{\frac 12}_{\epsilon}  \psi^{\epsilon},
 {\mathcal{L}}^{\frac 12}_{\epsilon} Q
\right)_{H_{\epsilon}}\nonumber\\
&-\alpha_2\left(Q^{\epsilon}
 {\mathcal{L}}^{\frac 12}_{\epsilon} (\phi-L^0_3),   {\mathcal{L}}^{\frac 12}_{\epsilon} Q
\right)_{H_{\epsilon}}-G_2(v-L_2^0,\phi-L_3^0,Q).\label{diffeq3}
\end{align}

 For the above system, we    have the following estimates.
\begin{lem}
\label{lemma42} There exists    $\epsilon_1 >0 $ such that, for
any $R>0$, there exists a constant $K$ depending on $R$ such that,
for any $0<\epsilon \le \epsilon_1$ and  $(u_0,v_0)\in
\Sigma_{\epsilon}$ with $\|(u_0, v_0)\|_{X_\epsilon \times
X_\epsilon} \le R$, the following holds:
 \[\|P^{\epsilon}(t)\|^2_{H_{\epsilon}}+\|Q^{\epsilon}(t)\|^2_{H_{\epsilon}}
 + \| \psi^{\epsilon} (t) \|^2_{X_\epsilon}  + \int_0^t\left(\|P^{\epsilon}(s)\|^2_{X_{\epsilon}}
   +\|Q^{\epsilon}(s)\|^2_{X_{\epsilon}}
  \right)ds
 \le  \epsilon K e^{Kt}, \quad t \ge 0,  \]
where $(\phi_\epsilon,u_{\epsilon}, v_{\epsilon})$ is the solution of problem
\eqref{PNP}-\eqref{IC} with  the initial condition $(u_0,v_0)$,
$(\phi, u, v)$ is the solution of problem \eqref{limPNP}-\eqref{limIC}
with  the initial condition $( M(u_0), M(v_0))$, and $(\psi^\epsilon, P^\epsilon, Q^\epsilon)$ is given by \eqref{diffvar}.
\end{lem}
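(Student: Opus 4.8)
The plan is to run a Gronwall argument on the difference system \eqref{diffeq1}--\eqref{diffeq3}, keeping careful track of the powers of $\epsilon$. The first task, and the place where the factor $\epsilon$ is actually created, is to bound the ``geometric error'' terms $F$ and $G_i$ from \eqref{fg}. Three facts are used. (i) Taylor expanding $g$ in $\epsilon$ with the aid of \eqref{gC} gives $g(x,\epsilon)=\epsilon g_0(x)+O(\epsilon^2)$, and since $g_0>0$ is bounded away from $0$ on $[0,1]$, the coefficient $\frac{\partial_x g^2}{g^2}-\frac{\partial_x g_0^2}{g_0^2}=2\bigl(\frac{g_x}{g}-\frac{g_0'}{g_0}\bigr)$ is $O(\epsilon)$ uniformly on $[0,1]$. (ii) For any $w\in H^1_D(\Omega)$ the definition of the $X_\epsilon$-norm gives $\|w_y\|_2+\|w_z\|_2\le\epsilon\|w\|_{X_\epsilon}$, hence $\|yw_y+zw_z\|_{H_\epsilon}\le C\epsilon\|w\|_{X_\epsilon}$ since $|y|,|z|\le1$ on $\bbD$ and $g/\epsilon$ is bounded by \eqref{g_inequality}; this is the mechanism behind Lemma \ref{meanlemma}. (iii) The one-dimensional functions $\phi-L_3^0$, $u-L_1^0$, $v-L_2^0$ are bounded in $H^1(0,1)$ (hence in $L^\infty(0,1)$ by Sobolev embedding) uniformly in $t\ge0$, because $\|(M(u_0),M(v_0))\|_{H^1\times H^1}\le C\|(u_0,v_0)\|_{X_\epsilon\times X_\epsilon}\le CR$ so that Lemma \ref{lemma34} applies, and the one-dimensional Poisson equation then puts $\phi$ in $H^2(0,1)\hookrightarrow C^1[0,1]$ with $\phi_x$ bounded uniformly in $t$. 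Combining (i)--(iii) with Cauchy--Schwarz in $H_\epsilon$ and the uniform bound \eqref{Linfbd} yields, for all $0<\epsilon\le\epsilon_1$,
\[|F(\phi-L_3^0,\psi)|\le C\epsilon\|\psi\|_{X_\epsilon},\qquad |G_i(u-L_i^0,\phi-L_3^0,P)|\le C\epsilon\|P\|_{X_\epsilon}.\]

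Next I would record the initial-data bound $\|P^\epsilon(0)\|_{H_\epsilon}+\|Q^\epsilon(0)\|_{H_\epsilon}\le C\epsilon$: writing $P^\epsilon(0)=(u_0-M(u_0))-(L_1^\epsilon-L_1^0)$, the first difference is $O(\epsilon)$ in $H_\epsilon$ by Lemma \ref{meanlemma} and $\|u_0\|_{X_\epsilon}\le R$, while $\|L_1^\epsilon-L_1^0\|_\infty=O(\epsilon)$ follows from \eqref{AF}, the smoothness of $L_1$ and $g(x,\epsilon)=O(\epsilon)$, so $\|L_1^\epsilon-L_1^0\|_{H_\epsilon}\le C\|L_1^\epsilon-L_1^0\|_\infty\le C\epsilon$; the estimate for $Q^\epsilon(0)$ is identical. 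I would also need the elliptic estimate obtained by taking $\psi=\psi^\epsilon$ in \eqref{diffeq1}: using \eqref{honebd}, the Poincar\'e inequality in $H^1_D(\Omega)$, and the bound on $F$,
\[\|{\mathcal L}_\epsilon^{1/2}\psi^\epsilon\|_{H_\epsilon}^2+\|\psi^\epsilon\|_{X_\epsilon}^2\le C\bigl(\|P^\epsilon\|_{H_\epsilon}^2+\|Q^\epsilon\|_{H_\epsilon}^2+\epsilon^2\bigr).\]
Note that $\psi^\epsilon$ is controlled by $P^\epsilon,Q^\epsilon$ in the weak $H_\epsilon$-norm only, so there is no circularity.

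The main step is the energy estimate: take $P=P^\epsilon$ in \eqref{diffeq2} and $Q=Q^\epsilon$ in \eqref{diffeq3}. The diagonal terms $-a_\epsilon(P^\epsilon,P^\epsilon)$, $-a_\epsilon(Q^\epsilon,Q^\epsilon)$ give the dissipation $-c(\|P^\epsilon\|_{X_\epsilon}^2+\|Q^\epsilon\|_{X_\epsilon}^2)$ through \eqref{lhalfbd}; the quadratic terms with coefficient $u_\epsilon-L_1^\epsilon$ or $v_\epsilon-L_2^\epsilon$ are handled using $\|u_\epsilon-L_1^\epsilon\|_\infty,\|v_\epsilon-L_2^\epsilon\|_\infty\le C$ from \eqref{Linfbd}, Cauchy--Schwarz, Young's inequality and the elliptic estimate above; the terms with coefficient $\phi-L_3^0$ use the uniform $L^\infty$-bound on $\partial_x(\phi-L_3^0)$; and the $G_i$ terms are absorbed using the bound from the first paragraph together with Young. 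After absorbing every $\|P^\epsilon\|_{X_\epsilon}^2$ and $\|Q^\epsilon\|_{X_\epsilon}^2$ contribution into the dissipation, one arrives at
\[\frac{d}{dt}\bigl(\|P^\epsilon\|_{H_\epsilon}^2+\|Q^\epsilon\|_{H_\epsilon}^2\bigr)+c\bigl(\|P^\epsilon\|_{X_\epsilon}^2+\|Q^\epsilon\|_{X_\epsilon}^2\bigr)\le C\bigl(\|P^\epsilon\|_{H_\epsilon}^2+\|Q^\epsilon\|_{H_\epsilon}^2\bigr)+C\epsilon^2.\]
Discarding the dissipation and applying Gronwall's lemma with the $O(\epsilon^2)$ initial data gives $\|P^\epsilon(t)\|_{H_\epsilon}^2+\|Q^\epsilon(t)\|_{H_\epsilon}^2\le C\epsilon^2e^{Ct}$, which is $\le\epsilon Ke^{Kt}$ after possibly shrinking $\epsilon_1\le1$. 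Feeding this back into the elliptic estimate bounds $\|\psi^\epsilon(t)\|_{X_\epsilon}^2$, and integrating the differential inequality in $t$ bounds $\int_0^t(\|P^\epsilon\|_{X_\epsilon}^2+\|Q^\epsilon\|_{X_\epsilon}^2)\,ds$; adding the three pieces gives the assertion.

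I expect the real obstacle to be the first step. What is not obvious is that \emph{both} types of terms in $F$ and $G_i$ are $O(\epsilon)$: those carrying the small coefficient $\frac{\partial_x g^2}{g^2}-\frac{\partial_x g_0^2}{g_0^2}$, and those of the form $\bigl(\frac{g_x}{g}\,p_x,\,yr_y+zr_z\bigr)_{H_\epsilon}$, whose coefficient is only $O(1)$ — the latter being $O(\epsilon)$ precisely because the transverse derivatives of a function in $H^1_D(\Omega)$ are small in $L^2$ relative to the $X_\epsilon$-norm. Once this bookkeeping is in place, the rest is a standard, if lengthy, energy computation in which the $\epsilon$-uniform bounds \eqref{Linfbd}, \eqref{honebd}, \eqref{lhalfbd} and Lemma \ref{lemma34} do all the work.
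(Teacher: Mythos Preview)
Your proposal is correct and follows essentially the same route as the paper: test \eqref{diffeq1}--\eqref{diffeq3} with $\psi^\epsilon,P^\epsilon,Q^\epsilon$, use $\|u_\epsilon-L_1^\epsilon\|_\infty\le C$ and $\|\partial_x(\phi-L_3^0)\|_\infty\le C$ to handle the nonlinear cross terms, absorb $\psi^\epsilon$ via the elliptic estimate, and close by Gronwall with initial data controlled through Lemma~\ref{meanlemma} and $\|L_k^\epsilon-L_k^0\|_{H_\epsilon}=O(\epsilon)$. Your abstraction of the $F,G_i$ bounds (packaging both the $O(\epsilon)$ coefficient terms and the transverse-derivative terms into a single $C\epsilon\|\cdot\|_{X_\epsilon}$ estimate) is slightly cleaner than the paper's term-by-term treatment and even yields $\epsilon^2$ in a couple of places where the paper is content with $\epsilon$, but the argument is the same.
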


\begin{proof}
It  follows from   \eqref{diffeq1}--\eqref{diffeq3} that
\begin{align}
  a_{\epsilon}(\psi^{\epsilon},\psi^{\epsilon})
  =& \lambda\alpha_1(P^{\epsilon},\psi^{\epsilon})_{H_{\epsilon}}
  - \lambda\alpha_2 (Q^{\epsilon},\psi^{\epsilon})_{H_{\epsilon}}+F(\phi-L_3^0,\psi^{\epsilon}), \label{pfprop411}\\
 \frac{1}{D_1}\left(\partial_t P^{\epsilon},
P^{\epsilon}\right)_{H_{\epsilon}} =& -a_{\epsilon}( P^{\epsilon},
P^{\epsilon})+
 \alpha_1\left((u_{\epsilon}-L_1^\epsilon) {\mathcal{L}}^{\frac 12}_{\epsilon}  \psi^{\epsilon},
 {\mathcal{L}}^{\frac 12}_{\epsilon} P^{\epsilon}\right)_{H_{\epsilon}}\nonumber\\
&+\alpha_1\left(P^{\epsilon}
 {\mathcal{L}}^{\frac 12}_{\epsilon} (\phi-L^0_3),
  {\mathcal{L}}^{\frac 12}_{\epsilon} P^\epsilon
   \right)_{H_{\epsilon}} -G_1(u-L_1^0,\phi-L_3^0,P^{\epsilon}),\label{pfprop412}\\
\frac{1}{D_2}\left(\partial_t Q^{\epsilon},
Q^{\epsilon}\right)_{H_{\epsilon}} =& -a_{\epsilon}(Q^{\epsilon},
Q^{\epsilon}) -\alpha_2\left((v_{\epsilon}-L_2^\epsilon)
{\mathcal{L}}^{\frac 12}_{\epsilon}  \psi^{\epsilon},
 {\mathcal{L}}^{\frac 12}_{\epsilon} Q^{\epsilon}
\right)_{H_{\epsilon}}\nonumber\\
&-\alpha_2\left(Q^{\epsilon}
 {\mathcal{L}}^{\frac 12}_{\epsilon} (\phi-L^0_3),   {\mathcal{L}}^{\frac 12}_{\epsilon} Q^\epsilon
\right)_{H_{\epsilon}}-G_2(v-L_2^0,\phi-L_3^0,Q^{\epsilon}).\label{pfprop413}
\end{align}
Next, we estimate  each term on the right-hand sides of
\eqref{pfprop411}-\eqref{pfprop413}. The first two terms on the
right-hand side of \eqref{pfprop411} are bounded by:
\begin{align}\label{pfprop414}
 \lambda \alpha_1  \left|(P^{\epsilon},\psi^{\epsilon})_{H_{\epsilon}}\right|+
\lambda \alpha_2 \left|(Q^{\epsilon},\psi^{\epsilon})_{H_{\epsilon}}\right|
 \le & C
 \left (\|P^{\epsilon}\|_{H_{\epsilon}}+\|Q^{\epsilon}\|_{H_{\epsilon}} \right )
  \|\psi^{\epsilon}\|_{H_{\epsilon}}\nonumber\\
 \le & C \left (\|P^{\epsilon}\|^2_{H_{\epsilon}}+\|Q^{\epsilon}\|^2_{H_{\epsilon}}
 \right )+ {\frac 14}  a_{\epsilon}(\psi^{\epsilon},\psi^{\epsilon}).
 \end{align}
 By \eqref{g_inequality}  we find that $g$ satisfies
\[\left|\frac{\partial_x g^2}{g^2} -\frac{\partial_x g_0^2}{g_0^2}\right|\le C\epsilon.\]
Then, by Lemma \ref{lemma34}, the first term in $F(\phi-L_3^0,\psi^{\epsilon})$  on the right-hand side of
  \eqref{pfprop411} is less than
\begin{equation}
\label{pfprop415}
 \left|\left(\left(\frac{\partial_x g^2}{g^2}
  -\frac{\partial_x g_0^2}{g_0^2}\right)
   (\phi-L^0_3)_x,\psi^{\epsilon}\right)_{H_{\epsilon}}\right|
   \le C \epsilon    \|\psi^{\epsilon}\|^2_{H_{\epsilon}}
\le C\epsilon^2 + {\frac 14}   a_{\epsilon}(\psi^{\epsilon},\psi^{\epsilon}).
 \end{equation}
 It follows from \eqref{alltimeh1bd} and  Lemma  \ref{lemma34} that
 the  second   term in $F(\phi-L_3^0,\psi^{\epsilon})$ on the right-hand side of \eqref{pfprop411} is
 bounded by, for $t \ge 0$,
 \begin{align}
 \label{pfprop416}
  &\left|\left(\frac{g_x}{g}(\phi-L^0_3)_x, y\partial_y(\phi_{\epsilon}-L_3^\epsilon)+
z\partial_z(\phi_{\epsilon}-L_3^\epsilon)\right)_{H_{\epsilon}}\right| \nonumber\\
&  \le C  \left ( \|\partial_y \phi_{\epsilon} \|_{H_\epsilon}  +
\|\partial_z \phi_{\epsilon} \|_{H_\epsilon}
 +\|\partial_y L_3^\epsilon \|_{H_\epsilon}  + \|\partial_z L_3^\epsilon \|_{H_\epsilon} \right )
\le  C \epsilon .
\end{align}
By \eqref{pfprop411} and \eqref{pfprop414}-\eqref{pfprop416}, we
obtain, for all $t \ge 0$,
\begin{equation}
\label{pfprop417}
 \|\psi^{\epsilon} (t) \|^2_{X_{\epsilon}}\le C  a_{\epsilon}(\psi^{\epsilon},\psi^{\epsilon})
 \le C
 \left (\|P^{\epsilon}\|^2_{H_{\epsilon}}+\|Q^{\epsilon}\|^2_{H_{\epsilon}}
 \right )
+ C \epsilon.
\end{equation}
We now deal with the right-hand side of \eqref{pfprop412}. By
\eqref{pfprop417}, the second term on the right-hand side of
\eqref{pfprop412} is less than
 \begin{align}
 \label{pfprop418}
 \alpha_1 \left|\left((u_{\epsilon}-L_1){\mathcal{L}}_\epsilon^{\frac 12}
 \psi^{\epsilon}, {\mathcal{L}}_\epsilon^{\frac 12}
P^{\epsilon}\right)_{H_{\epsilon}}\right|
 \le   C(\|\psi^{\epsilon}\|^2_{X_{\epsilon}}
+ \|P^{\epsilon}\|^2_{X_{\epsilon}})
 \le
   C(\|P^{\epsilon}\|^2_{X_{\epsilon}}
+ \|Q^{\epsilon}\|^2_{X_{\epsilon}}) + C\epsilon.
\end{align}
 Since  the functions $\phi$ and $L_3^0$ depend on $x \in
(0,1)$ only,    we have
\[ {\mathcal{L}}^{\frac 12}_\epsilon \phi = J^\tau \nabla \phi
= (\partial_x \phi, 0, 0)^\tau, \qquad
 {\mathcal{L}}^{\frac 12}_\epsilon L_3^0  = J^\tau \nabla L_3^0
= (\partial_x L_3^0, 0, 0)^\tau, \]
 which, along with Lemma \ref{lemma34} and the first equation of  \eqref{limPNP},
 implies that, for all $t \ge 0$,
\begin{equation}
\label{pfprop419}
 \|{\mathcal{L}}^{\frac 12}_\epsilon \phi \|_{\infty}
 = \| \partial_x \phi \|_\infty
 \le  C \| \partial_x \phi \|_{H^1 }
 \le C \|\phi \|_{H^2}
 \le C.
 \end{equation}
 By \eqref{pfprop419}, the third term on the right-hand side
 of \eqref{pfprop412} is bounded by
\begin{eqnarray}
\label{pfprop4110}
\alpha_1 \left | \left(P^{\epsilon}
 {\mathcal{L}}^{\frac 12}_{\epsilon} (\phi-L^0_3),
  {\mathcal{L}}^{\frac 12}_{\epsilon} P^\epsilon
  \right)_{H_{\epsilon}} \right |
  &\le& \alpha_1 \left ( \|{\mathcal{L}}^{\frac 12}_{\epsilon} \phi
  \|_\infty +
  \|{\mathcal{L}}^{\frac 12}_{\epsilon} L^0_3
  \|_\infty \right ) \|P^{\epsilon}\|_{H_\epsilon} \| {\mathcal{L}}^{\frac 12}_{\epsilon} P^\epsilon
  \|_{H_\epsilon} \nonumber \\
  &\le  &
  C \|P^{\epsilon}\|_{H_\epsilon}^2  + {\frac 14}a_{\epsilon}(P^{\epsilon}, P^{\epsilon}) .
 \end{eqnarray}
 Note that  the term $G_1$ on the right-hand side of \eqref{pfprop412}
 can be estimated in a similar manner as
 \eqref{pfprop414}-\eqref{pfprop416}.
 Therefore, it follows from
  \eqref{pfprop412} and \eqref{pfprop417}-\eqref{pfprop4110} that,
  for $t \ge 0$,
\begin{align}
\label{pfprop4111}
  \frac{d}{dt}\|P^{\epsilon}\|^2_{H_{\epsilon}}
    +  \| P^{\epsilon}\|_{X_\epsilon}^2 \le
     C (\|P^{\epsilon}\|^2_{H_{\epsilon}}
+ \|Q^{\epsilon}\|^2_{H_{\epsilon}}) +C \epsilon.
\end{align}
Similarly,   $Q^\epsilon$ satisfies, for $t \ge 0$,
\begin{align}
\label{pfprop4112}
  \frac{d}{dt}\|Q^{\epsilon}\|^2_{H_{\epsilon}}
    +  \| Q^{\epsilon}\|_{X_\epsilon}^2 \le
     C (\|P^{\epsilon}\|^2_{H_{\epsilon}}
+ \|Q^{\epsilon}\|^2_{H_{\epsilon}}) +C \epsilon.
 \end{align}
 Then, it follows from \eqref{pfprop4111}-\eqref{pfprop4112}
 that, for $t \ge 0$,
\begin{align}
\label{pfprop4113}
  \frac{d}{dt} \left (\|P^{\epsilon}\|^2_{H_{\epsilon}}
  + \|Q^{\epsilon}\|^2_{H_{\epsilon}} \right )
  + \| P^{\epsilon}\|_{X_\epsilon}^2
    +  \| Q^{\epsilon}\|_{X_\epsilon}^2 \le
     C (\|P^{\epsilon}\|^2_{H_{\epsilon}}
+ \|Q^{\epsilon}\|^2_{H_{\epsilon}}) +C \epsilon.
 \end{align}
 By Gronwall's lemma, we get
\begin{align}
\label{pfprop4114}
   & \|P^{\epsilon} (t) \|^2_{H_{\epsilon}}
  + \|Q^{\epsilon} (t) \|^2_{H_{\epsilon}}
   \le e^{Ct} \left (
 \|P^{\epsilon} (0) \|^2_{H_{\epsilon}}
  + \|Q^{\epsilon} (0) \|^2_{H_{\epsilon}}
\right )
 + \epsilon e^{C t} \nonumber \\
   &\le C e^{Ct} \left (
 \|u_0 - M(u_0) \|^2_{H_{\epsilon}}
 + \|L_1^\epsilon - L_1^0 \|^2_{H_{\epsilon}}
 + \|v_0 - M(v_0) \|^2_{H_{\epsilon}}
 + \|L_2^\epsilon - L_2^0 \|^2_{H_{\epsilon}}
\right )
  + \epsilon e^{C t}.
 \end{align}
By \eqref{AF} we see that   $L_1, L_2 \in W^{1,
\infty}(\Omega_\epsilon)$,  and hence, for $k=1,2$,
\begin{equation}
\label{pfprop4115}
 \|L_k^\epsilon - L_k^0 \|^2_{H_{\epsilon}}
 = \| \int_0^1  \left ( y g  {\frac {\partial L_k}{\partial Y}}(x, s  g y, s g z )
 + z g  {\frac {\partial L_k}{\partial  Z}}(x, s g y, s g z ) \right ) ds \|^2_{H_{\epsilon}}
 \le C\epsilon^2.
\end{equation}
From \eqref{pfprop4114}-\eqref{pfprop4115} and Lemma
\ref{meanlemma}, we find that
\begin{equation}
\label{pfprop4116}
 \|P^{\epsilon} (t) \|^2_{H_{\epsilon}}
  + \|Q^{\epsilon} (t) \|^2_{H_{\epsilon}}
   \le \epsilon (C+1) e^{Ct}.
   \end{equation}
Integrating \eqref{pfprop4113} between $0$ and $t$, by
\eqref{pfprop4116} we conclude Lemma \ref{lemma42}.
\end{proof}

Next, we improve the uniform estimates in $\epsilon$  given in
Lemma \ref{lemma42}.
\begin{lem}
\label{lemma43}
There exists    $\epsilon_1 >0 $ such that, for
any $R>0$, there exists a constant $K$ depending on $R$ such that,
for any $0<\epsilon \le \epsilon_1$ and  $(u_0,v_0)\in
\Sigma_{\epsilon}$ with $\|(u_0, v_0)\|_{X_\epsilon \times
X_\epsilon} \le R$, the following holds:
$$   t^2 \left ( \|   {\frac {\partial P^\epsilon}{\partial t}} \|_{H_\epsilon}^2
 +  \|   {\frac {\partial Q^\epsilon}{\partial t}} \|_{H_\epsilon}^2  \right )
    +  t  \left (  \|   P^\epsilon \|^2_{X_\epsilon}
    +    \|   Q^\epsilon \|^2_{X_\epsilon}  \right )
    \le \sqrt{\epsilon} Ke^{Kt},  \quad t \ge 0,
    $$
  where $(\psi^\epsilon, P^\epsilon, Q^\epsilon)$ is given by \eqref{diffvar},
   $(\phi_\epsilon,
u_{\epsilon}, v_{\epsilon})$ is the solution of problem
\eqref{PNP}-\eqref{IC} with  the initial condition $(u_0,v_0)$,  and
$(\phi, u, v)$ is the solution of problem \eqref{limPNP}-\eqref{limIC}
with  the initial condition $( M(u_0), M(v_0)).$
\end{lem}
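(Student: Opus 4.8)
The plan is to differentiate the difference system \eqref{diffeq1}--\eqref{diffeq3} in $t$ and run weighted energy estimates in the spirit of the proof of Lemma \ref{lemma35}, but now carefully tracking powers of $\epsilon$. Write $\tilde\psi=\partial_t\psi^{\epsilon}$, $\tilde P=\partial_t P^{\epsilon}$, $\tilde Q=\partial_t Q^{\epsilon}$; since $L_k^{\epsilon}$ and $L_k^0$ are independent of $t$, we have $\tilde\psi=\partial_t\phi_{\epsilon}-\partial_t\phi$, and similarly for $\tilde P,\tilde Q$. Differentiating \eqref{diffeq1}--\eqref{diffeq3} in $t$ and then multiplying by $t$ (exactly as in the passage from the differentiated \eqref{absPNP} to \eqref{pflemma351}), one obtains weak identities for $t\tilde\psi$, $t\tilde P$, $t\tilde Q$ whose right-hand sides involve: the forcing terms $\tilde P,\tilde Q$ (coming from $\frac{d}{dt}(t\tilde P)=t\partial_t\tilde P+\tilde P$); the time derivatives $\partial_t u_{\epsilon},\partial_t v_{\epsilon}$ of the three-dimensional solution, controlled with a $t$-weight by Lemma \ref{lemma35}; the time derivatives $\partial_t u,\partial_t v,\partial_t\phi$ of the one-dimensional solution, controlled by Lemma \ref{lemma36}; the bounded quantities $u_{\epsilon}-L_1^{\epsilon}$, $\phi-L_3^0$, etc.; and the geometric defect terms coming from $F$ and $G_i$ in \eqref{fg}.

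First I would take the test function $t\tilde\psi$ in the differentiated, scaled Poisson identity. The only forcing beyond $\lambda\alpha_1 t\tilde P-\lambda\alpha_2 t\tilde Q$ is $F(\partial_t\phi,\cdot)$, whose two pieces carry a factor $C\epsilon$ via \eqref{g_inequality} (the $\partial_xg^2/g^2-\partial_xg_0^2/g_0^2$ piece) or are of lower order in the $y,z$-derivatives (the $g_x/g$ piece), so Lemma \ref{lemma36} gives
\[
\|t\tilde\psi\|^2_{X_{\epsilon}}\le C\left(\|t\tilde P\|^2_{H_{\epsilon}}+\|t\tilde Q\|^2_{H_{\epsilon}}\right)+\epsilon Ke^{Kt}.
\]
This removes $t\tilde\psi$ from all subsequent estimates.

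Next I would test the differentiated, scaled $P^{\epsilon}$- and $Q^{\epsilon}$-identities with $t\tilde P$ and $t\tilde Q$ in $H_{\epsilon}$. Every term on the right is bounded with the tools already used in \eqref{pflemma325}, \eqref{pflemma354}--\eqref{pflemma356} and \eqref{pfprop418}--\eqref{pfprop4110}: the $L^3$--$L^6$ Hölder split and $\|w\|_3\le\|w\|_2^{1/2}\|w\|_{H^1}^{1/2}$, the uniform $L^{\infty}$ bounds \eqref{Linfbd}--\eqref{unibdL}, the uniform $X_{\epsilon}$-bounds \eqref{alltimeh1bd} and Lemmas \ref{lemma32}, \ref{lemma34}, the bound \eqref{pfprop419} on $\|\mathcal{L}_{\epsilon}^{1/2}\phi\|_{\infty}$, Lemma \ref{meanlemma}, together with the smallness estimates of Lemma \ref{lemma42}: each factor of $P^{\epsilon}$ or $Q^{\epsilon}$ measured in $H_{\epsilon}$, and each factor of $\psi^{\epsilon}$ measured in $X_{\epsilon}$, contributes a power $\sqrt\epsilon$. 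For the dangerous products involving the merely $L^2$ time derivatives $\partial_t u_{\epsilon},\partial_t v_{\epsilon}$ paired against gradients of $t\tilde P$, one uses in addition that the Poisson equation $\mathcal{L}_{\epsilon}\psi^{\epsilon}=\lambda\alpha_1 P^{\epsilon}-\lambda\alpha_2 Q^{\epsilon}+(\text{correction})$ and Lemma \ref{lemma42} force $\|\mathcal{L}_{\epsilon}\psi^{\epsilon}\|_{H_{\epsilon}}\le C\sqrt\epsilon$, hence $\|\psi^{\epsilon}\|^2_{H^2}\le C\epsilon$, together with the time-integrability $\int_0^t s^2\|\partial_s u_{\epsilon}\|^2_{X_{\epsilon}}\,ds\le Ke^{Kt}$ from Lemma \ref{lemma35}. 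After absorbing the top-order terms $\|\mathcal{L}_{\epsilon}^{1/2}(t\tilde P)\|^2_{H_{\epsilon}}$, $\|\mathcal{L}_{\epsilon}^{1/2}(t\tilde Q)\|^2_{H_{\epsilon}}$ into the left side and using the previous paragraph to eliminate $t\tilde\psi$, one reaches
\[
\frac{d}{dt}\left(\|t\tilde P\|^2_{H_{\epsilon}}+\|t\tilde Q\|^2_{H_{\epsilon}}\right)+C_1\left(\|\mathcal{L}_{\epsilon}^{1/2}(t\tilde P)\|^2_{H_{\epsilon}}+\|\mathcal{L}_{\epsilon}^{1/2}(t\tilde Q)\|^2_{H_{\epsilon}}\right)\le C\left(\|t\tilde P\|^2_{H_{\epsilon}}+\|t\tilde Q\|^2_{H_{\epsilon}}\right)+\sqrt\epsilon Ke^{Kt},
\]
and Gronwall's lemma gives $t^2\big(\|\tilde P\|^2_{H_{\epsilon}}+\|\tilde Q\|^2_{H_{\epsilon}}\big)\le\sqrt\epsilon Ke^{Kt}$. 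The loss from $\epsilon$ to $\sqrt\epsilon$ is intrinsic and is the place where the main work lies: handling the forcing $t\|\tilde P\|^2_{H_{\epsilon}}$ as in \eqref{pflemma356} forces the appearance of $C\|\mathcal{L}_{\epsilon}^{1/2}P^{\epsilon}\|^2_{H_{\epsilon}}$, which is controlled only through $\|\mathcal{L}_{\epsilon}^{1/2}P^{\epsilon}\|^2_{H_{\epsilon}}\le C\|P^{\epsilon}\|_{H_{\epsilon}}\|\mathcal{L}_{\epsilon}P^{\epsilon}\|_{H_{\epsilon}}$ with $\|P^{\epsilon}\|_{H_{\epsilon}}=O(\sqrt\epsilon)$ from Lemma \ref{lemma42} and a uniform-in-$\epsilon$ bound on $\|\mathcal{L}_{\epsilon}P^{\epsilon}\|_{H_{\epsilon}}$; one must check, term by term, that every other contribution is either absorbable into the dissipation, of the form $C(\|t\tilde P\|^2_{H_{\epsilon}}+\|t\tilde Q\|^2_{H_{\epsilon}})$, or $O(\sqrt\epsilon)$ (indeed usually $O(\epsilon)$) times a quantity whose $t$-integral is $\le Ke^{Kt}$.

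Finally, for the term $t\|P^{\epsilon}\|^2_{X_{\epsilon}}+t\|Q^{\epsilon}\|^2_{X_{\epsilon}}$, I would use the strong form of the $P^{\epsilon}$-equation to write $D_1\mathcal{L}_{\epsilon}P^{\epsilon}=\mathcal{R}^{\epsilon}-\tilde P$, where $\mathcal{R}^{\epsilon}$ collects the (divergence-form) right-hand side; by \eqref{alltimeh1bd}, Lemmas \ref{lemma32}, \ref{lemma34} and the Sobolev embeddings one has $\|\mathcal{R}^{\epsilon}\|_{H_{\epsilon}}\le C$ uniformly in $\epsilon$, so $t\|\mathcal{L}_{\epsilon}P^{\epsilon}\|_{H_{\epsilon}}\le Ct+\frac{1}{D_1}t\|\tilde P\|_{H_{\epsilon}}\le Ke^{Kt}$ by the previous step. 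Then \eqref{lhalfbd} and Cauchy--Schwarz give $\|P^{\epsilon}\|^2_{X_{\epsilon}}\le C\|\mathcal{L}_{\epsilon}^{1/2}P^{\epsilon}\|^2_{H_{\epsilon}}\le C\|P^{\epsilon}\|_{H_{\epsilon}}\|\mathcal{L}_{\epsilon}P^{\epsilon}\|_{H_{\epsilon}}$, whence $t\|P^{\epsilon}\|^2_{X_{\epsilon}}\le C\|P^{\epsilon}\|_{H_{\epsilon}}\,t\|\mathcal{L}_{\epsilon}P^{\epsilon}\|_{H_{\epsilon}}\le\sqrt\epsilon Ke^{Kt}$ by Lemma \ref{lemma42}; the identical argument applies to $Q^{\epsilon}$. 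Combining this with the Gronwall estimate of the preceding paragraph proves the lemma.
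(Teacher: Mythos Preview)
Your overall strategy---differentiate \eqref{diffeq1}--\eqref{diffeq3} in $t$, insert the weight $t$, run energy estimates, and track $\epsilon$---matches the paper's, and your treatment of the Poisson piece and of the bulk of the nonlinear terms is fine. The gap is in how you close on the forcing term $(\tilde P,t\tilde P)_{H_\epsilon}$ and on $t\|P^\epsilon\|^2_{X_\epsilon}$. Both of your arguments ultimately rely on a \emph{pointwise-in-$t$} bound $\|\mathcal{L}_\epsilon P^\epsilon\|_{H_\epsilon}\le C$ (or equivalently a pointwise $H^2$ bound on $u_\epsilon$), and that bound is not available from the lemmas you cite. Concretely, in your last paragraph you write $D_1\mathcal{L}_\epsilon P^\epsilon=\mathcal{R}^\epsilon-\tilde P$ and claim $\|\mathcal{R}^\epsilon\|_{H_\epsilon}\le C$; but $\mathcal{R}^\epsilon$ contains $J^\tau\nabla(u_\epsilon-L_1^\epsilon)\cdot J^\tau\nabla\psi^\epsilon$, and an $L^2$ bound on this product needs $\|\nabla u_\epsilon\|_3$, i.e.\ $H^{3/2}$ control of $u_\epsilon$, which \eqref{alltimeh1bd} and Lemmas \ref{lemma32}, \ref{lemma34} do not give pointwise (they give only $\|u_\epsilon\|_{X_\epsilon}\le K$ and a time-integrated bound on $\|\mathcal{L}_\epsilon u_\epsilon\|^2_{H_\epsilon}$). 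The same circularity appears in your middle paragraph when you interpolate $\|\mathcal{L}_\epsilon^{1/2}P^\epsilon\|^2\le C\|P^\epsilon\|_{H_\epsilon}\|\mathcal{L}_\epsilon P^\epsilon\|_{H_\epsilon}$.

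The paper avoids this by \emph{not} estimating $\|\mathcal{L}_\epsilon P^\epsilon\|_{H_\epsilon}$ pointwise. Instead, when testing \eqref{diffeq2} with $t\tilde P^\epsilon$ to handle the forcing, it keeps the identity $a_\epsilon(P^\epsilon,t\tilde P^\epsilon)=\tfrac12\,t\,\tfrac{d}{dt}\|\mathcal{L}_\epsilon^{1/2}P^\epsilon\|^2_{H_\epsilon}$ intact and moves it to the left, so the Gronwall inequality is run on the combined energy $\tfrac{1}{D_1}\|t\tilde P^\epsilon\|^2_{H_\epsilon}+t\|\mathcal{L}_\epsilon^{1/2}P^\epsilon\|^2_{H_\epsilon}$ (and the analogous $Q^\epsilon$ terms). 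The extra term produced by $\tfrac{d}{dt}(t\|\mathcal{L}_\epsilon^{1/2}P^\epsilon\|^2)=t\tfrac{d}{dt}\|\mathcal{L}_\epsilon^{1/2}P^\epsilon\|^2+\|\mathcal{L}_\epsilon^{1/2}P^\epsilon\|^2$ is exactly $\|\mathcal{L}_\epsilon^{1/2}P^\epsilon\|^2_{H_\epsilon}$, and this is harmless because Lemma \ref{lemma42} already provides $\int_0^t\|P^\epsilon\|^2_{X_\epsilon}\,ds\le \epsilon K e^{Kt}$. After Gronwall one reads off both $t^2\|\tilde P^\epsilon\|^2_{H_\epsilon}$ and $t\|P^\epsilon\|^2_{X_\epsilon}$ simultaneously. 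If you replace your interpolation step by this coupled-energy argument, your proof goes through; the origin of the $\sqrt{\epsilon}$ (from the $L^3$--$L^6$ split on $\mathcal{L}_\epsilon^{1/2}\psi^\epsilon$ paired with the time-integrability of $\|t\,\mathcal{L}_\epsilon^{1/2}\partial_t u_\epsilon\|^2_{H_\epsilon}$ from Lemma \ref{lemma35}) is as you described.
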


\begin{proof}  Denote by
\begin{equation}
\label{pflemma431}
{\tilde{P}}^\epsilon = {\frac {\partial P^\epsilon}{\partial t}}, \quad
{\tilde{Q}}^\epsilon = {\frac {\partial Q^\epsilon}{\partial t}}, \quad
{\tilde{\psi}}^\epsilon = {\frac {\partial \psi^\epsilon}{\partial t}}.
\end{equation}
Differentiating systems  \eqref{diffeq1}--\eqref{diffeq3}   with respect to $t$, multiplying the resulting systems by $t$, replacing
  $ \psi$, $P$ and $Q$ by $t \tilde \psi$,
 $t\tilde P$ and $t \tilde Q$,  respectively,  we obtain
 \begin{align} \label{pflemma432}
  a_{\epsilon}(t {\tilde{\psi}}^{\epsilon}, t {\tilde{\psi}}^{\epsilon})
  =& \lambda\alpha_1(t \tilde P^{\epsilon},  t {\tilde{\psi}}^{\epsilon})_{H_{\epsilon}}
  - \lambda\alpha_2 (t \tilde Q^{\epsilon},  t {\tilde{\psi}}^{\epsilon})_{H_{\epsilon}}
   + t  F(\phi_t,t  {\tilde{\psi}}^{\epsilon}),
\end{align}
 \begin{align} \label{pflemma433}
  {\frac {1}{2D_1}} {\frac d{dt}}  \| t   \tilde P^{\epsilon} \|^2_{H_{\epsilon}}
  &+a_{\epsilon}(  t \tilde P^{\epsilon},t \tilde P^{\epsilon})=
 \alpha_1t \left( \partial_t u_{\epsilon}  {\mathcal{L}}^{\frac 12}_{\epsilon}  \psi^{\epsilon},
 {\mathcal{L}}^{\frac 12}_{\epsilon}  t \tilde P^{\epsilon}  \right)_{H_{\epsilon}}
 + \alpha_1 t  \left ( (u_\epsilon -L_1^\epsilon) {\mathcal{L}}^{\frac 12}_\epsilon \tilde{\psi}^\epsilon,
 {\mathcal{L}}^{\frac 12}_\epsilon t \tilde P^{\epsilon} \right )_{H_\epsilon} \nonumber\\
 &+\alpha_1 t \left( \tilde P^{\epsilon}
 {\mathcal{L}}^{\frac 12}_{\epsilon} (\phi-L^0_3),
  {\mathcal{L}}^{\frac 12}_{\epsilon} t \tilde P^{\epsilon}
   \right)_{H_{\epsilon}}
   +\alpha_1 t \left(  P^{\epsilon}
 {\mathcal{L}}^{\frac 12}_{\epsilon} \phi_t,
  {\mathcal{L}}^{\frac 12}_{\epsilon} t \tilde P^{\epsilon}
   \right)_{H_{\epsilon}}
   \nonumber\\
   &-\alpha_1 t \left(\left(\frac{\partial_x g^2}{g^2}
  -\frac{\partial_x g_0^2}{g_0^2}\right)
   (u-L_1^0) \phi_{tx}, t \tilde P^{\epsilon} \right)_{H_{\epsilon}}\\
  &-\alpha_1 t  \left(\frac{g_x}{g} (u-L_1^0) \phi_{tx},
   t y\partial_y  \tilde P^{\epsilon}+
t z\partial_z   \tilde P^{\epsilon}  \right)_{H_{\epsilon}}\nonumber\\
&- t G_1(u_t,\phi-L_3^0, t \tilde P^{\epsilon})
 + {\frac 1{D_1}} ( \tilde P^{\epsilon} , t  \tilde P^{\epsilon} )_{H_\epsilon},\nonumber
 \end{align}
  \begin{align} \label{pflemma434}
 \frac{1}{2D_2} {\frac d{dt}}  \| t  \tilde Q^{\epsilon} \|^2 _{H_{\epsilon}}
 & + a_{\epsilon}(  t \tilde Q^{\epsilon}, t \tilde Q^{\epsilon}) = -
 \alpha_2 t \left( \partial_t v_{\epsilon}  {\mathcal{L}}^{\frac 12}_{\epsilon}  \psi^{\epsilon},
 {\mathcal{L}}^{\frac 12}_{\epsilon} t \tilde Q^{\epsilon} \right)_{H_{\epsilon}}
 - \alpha_2 t \left ( (v_\epsilon -L_2^\epsilon) {\mathcal{L}}^{\frac 12}_\epsilon \tilde{\psi}^\epsilon,
 {\mathcal{L}}^{\frac 12}_\epsilon   t \tilde Q^{\epsilon} \right )_{H_\epsilon} \nonumber\\
 &-\alpha_2 t  \left( \tilde Q^{\epsilon}
 {\mathcal{L}}^{\frac 12}_{\epsilon} (\phi-L^0_3),
  {\mathcal{L}}^{\frac 12}_{\epsilon}  t \tilde Q^{\epsilon}
   \right)_{H_{\epsilon}}
   -\alpha_2 t  \left(  Q^{\epsilon}
 {\mathcal{L}}^{\frac 12}_{\epsilon} \phi_t,
  {\mathcal{L}}^{\frac 12}_{\epsilon}  t \tilde Q^{\epsilon}
   \right)_{H_{\epsilon}}
   \nonumber\\
   &+\alpha_2 t  \left(\left(\frac{\partial_x g^2}{g^2}
  -\frac{\partial_x g_0^2}{g_0^2}\right)
   (v-L_2^0) \phi_{tx},  t \tilde Q^{\epsilon} \right)_{H_{\epsilon}}\\
 & + \alpha_2  t \left(\frac{g_x}{g} (v-L_2^0) \phi_{tx},
 t y\partial_y  \tilde Q^{\epsilon} +
 t z\partial_z  \tilde Q^{\epsilon}
  \right)_{H_{\epsilon}}\nonumber\\
 &- t G_2(v_t,\phi-L_3^0, t \tilde Q^{\epsilon}) + {\frac 1{D_2}} (\tilde Q^{\epsilon}, t\tilde Q^{\epsilon})_{H_\epsilon}.\nonumber
 \end{align}
 We now estimate every term involved in the above system. Note that
 \eqref{pflemma432} implies that
\begin{equation}
\label{pflemma435}
\| {\mathcal{L}}_\epsilon^{\frac 12} t \tilde \psi^\epsilon \|^2_{H_\epsilon}
 \le C \left (
  \| t \tilde P^\epsilon \|^2_{H_\epsilon}
  +
  \| t \tilde Q^\epsilon \|^2_{H_\epsilon}
   \right ) + C \epsilon^2 t^2.
   \end{equation}
By \eqref{alltimeh1bd} and Lemma \ref{lemma42}, we see that
the first term on the right-hand side of \eqref{pflemma433} is bounded by
\begin{align}
\label{pflemma436}
 & |\alpha_1t \left( \partial_t u_{\epsilon}  {\mathcal{L}}^{\frac 12}_{\epsilon}  \psi^{\epsilon},
 {\mathcal{L}}^{\frac 12}_{\epsilon}  t \tilde P^{\epsilon}  \right)_{H_{\epsilon}} |
 \le Ct \| \partial_t u_\epsilon \|_6 \| {\mathcal{L}}_\epsilon^{\frac 12} \psi^\epsilon \|_3
 \|{\mathcal{L}}_\epsilon^{\frac 12} t  \tilde P^\epsilon \|_2 \nonumber\\
 & \le  C t
 \| \partial_t u_\epsilon \|_{H^1} \| {\mathcal{L}}_\epsilon^{\frac 12} \psi^\epsilon \|_2^{\frac 12}
 \| {\mathcal{L}}_\epsilon^{\frac 12} \psi^\epsilon \|_{H^1}^{\frac 12}
 \|{\mathcal{L}}_\epsilon^{\frac 12} t  \tilde P^\epsilon \|_2 \nonumber\\
 & \le {\frac 1{32}} \|{\mathcal{L}}_\epsilon^{\frac 12} t  \tilde P^\epsilon \|^2_{H_\epsilon}
 + Ct^2  \|\mathcal{L}_\epsilon^{\frac 12} \partial_t u_\epsilon \|_{H_\epsilon} ^2
 \| {\mathcal{L}}_\epsilon^{\frac 12} \psi^\epsilon \|_{H_\epsilon}
 \| {\mathcal{L}}_\epsilon \psi^\epsilon \|_{H_\epsilon}  \nonumber\\
  & \le {\frac 1{32}} \|{\mathcal{L}}_\epsilon^{\frac 12} t  \tilde P^\epsilon \|^2_{H_\epsilon}
 + \sqrt{\epsilon} C e^{Ct}   \| t \mathcal{L}_\epsilon^{\frac 12} \partial_t u_\epsilon \|_{H_\epsilon} ^2.
   \end{align}
   By \eqref{pflemma435}, the second term on the
   right-hand side of \eqref{pflemma433} is less than
   \begin{equation}
   \label{pflemma437}
   \alpha_1 t  \left ( (u_\epsilon -L_1^\epsilon) {\mathcal{L}}^{\frac 12}_\epsilon \tilde{\psi}^\epsilon,
 {\mathcal{L}}^{\frac 12}_\epsilon t \tilde P^{\epsilon} \right )_{H_\epsilon}
 \le  {\frac 1{32}} \|{\mathcal{L}}_\epsilon^{\frac 12} t  \tilde P^\epsilon \|^2_{H_\epsilon}
 + C \left ( \| t \tilde P^\epsilon \|^2_{H_\epsilon}
  +\| t \tilde Q^\epsilon \|^2_{H_\epsilon} \right ) +   \epsilon^2 C t^2.
   \end{equation}
By Lemma \ref{lemma36}, the fourth term on the
   right-hand side of \eqref{pflemma433} is bounded by
   \begin{equation}
   \label{pflemma438}
   C \| t \mathcal{L}_\epsilon^{\frac 12} \phi_t \|_\infty \| P^\epsilon \|_2
   \|{\mathcal{L}}_\epsilon^{\frac 12} t  \tilde P^\epsilon \|_2
   \le {\frac 1{32}} \|{\mathcal{L}}_\epsilon^{\frac 12} t  \tilde P^\epsilon \|^2_{H_\epsilon}
+ C \| t  \phi_t \|_{H^2}^2 \| P^\epsilon \|^2_{H_\epsilon}
 \le {\frac 1{32}} \|{\mathcal{L}}_\epsilon^{\frac 12} t  \tilde P^\epsilon \|^2_{H_\epsilon}
+ \epsilon Ce^{Ct}.
 \end{equation}
 Other terms on the right-hand side of \eqref{pflemma433} can be estimated  in a similar way
as the proof of Lemma \ref{lemma42}. Therefore, by  \eqref{pflemma433},
 \eqref{pflemma436}-\eqref{pflemma438} and the estimates for other terms, we have
    \begin{align}\label{pflemma439}
    {\frac 1{2 D_1}} {\frac d{dt}} \| t \tilde{P}^\epsilon \|_{H_\epsilon}^2
    + {\frac 34} \| \mathcal{L}_\epsilon^{\frac 12} t \tilde P^\epsilon \|^2_{H_\epsilon}
     \le  &C \left (\| t \tilde{P}^\epsilon \|_{H_\epsilon}^2 +\| t \tilde{Q}^\epsilon \|_{H_\epsilon}^2
     \right ) + \epsilon C e^{Ct} + \epsilon^2  C  \left ( \| t u_t \|_{H^1}^2
     + \| t v_t \|_{H^1}^2 \right ) \nonumber\\
     &+ \sqrt{\epsilon} C e^{Ct} \| t \mathcal{L}_\epsilon^{\frac 12} \partial_t u_\epsilon \|^2_{H_\epsilon}+ {\frac 1{D_1}} (\tilde P^\epsilon, t \tilde P^\epsilon )_{H_\epsilon}.
    \end{align}
Next, we deal with the last term on the right-hand side  of the above inequality. Replacing $P$
in  \eqref{diffeq2}     by $t \partial_t P^\epsilon = t\tilde P^\epsilon $, we get
    \begin{align*}
    {\frac 1{D_1}} (\tilde P^\epsilon, t \tilde P^\epsilon )_{H_\epsilon}
    &+ {\frac 12} t {\frac d{dt}} \| \mathcal{L}_\epsilon^{\frac 12} P^\epsilon \|^2_{H_\epsilon}
     =
 \alpha_1\left((u_{\epsilon}-L_1^\epsilon) {\mathcal{L}}^{\frac 12}_{\epsilon}  \psi^{\epsilon},
 {\mathcal{L}}^{\frac 12}_{\epsilon}  t \tilde P^\epsilon  \right)_{H_{\epsilon}}\nonumber\\
&+\alpha_1\left(P^{\epsilon}
 {\mathcal{L}}^{\frac 12}_{\epsilon} (\phi-L^0_3),
  {\mathcal{L}}^{\frac 12}_{\epsilon}  t \tilde P^\epsilon
   \right)_{H_{\epsilon}}-G_1(u-L_1^0,\phi-L_3^0, t\tilde P^{\epsilon}).
\end{align*}
Using Lemma \ref{lemma42} and proceeding  as before, we obtain from the above that
    \begin{equation}
    \label{pflemma4310}
    {\frac 1{D_1}} (\tilde P^\epsilon, t \tilde P^\epsilon )_{H_\epsilon}
   + {\frac 12} t {\frac d{dt}} \| \mathcal{L}_\epsilon^{\frac 12} P^\epsilon \|^2_{H_\epsilon}
  \le    {\frac 14} \| {\mathcal{L}} _\epsilon^{\frac 12} t \tilde P ^\epsilon \|^2_{H_\epsilon}
     + \epsilon C e^{Ct} + \epsilon^2 C.
     \end{equation}
     Then it follows from \eqref{pflemma439}-\eqref{pflemma4310} that
      \begin{align*}
    {\frac 1{2 D_1}} {\frac d{dt}} \| t \tilde{P}^\epsilon \|_{H_\epsilon}^2
    + & {\frac 12} \| \mathcal{L}_\epsilon^{\frac 12} t \tilde P^\epsilon \|^2_{H_\epsilon}
    + {\frac 12} t {\frac d{dt}} \| \mathcal{L}_\epsilon^{\frac 12} P^\epsilon \|^2_{H_\epsilon} \nonumber\\
      & \le   C \left (
     \| t \tilde{P}^\epsilon \|_{H_\epsilon}^2 +\| t \tilde{Q}^\epsilon \|_{H_\epsilon}^2
     \right ) + \epsilon C e^{Ct}  \nonumber\\
     & + \epsilon^2  C  \left ( \| t u_t \|_{H^1}^2
     + \| t v_t \|_{H^1}^2 \right )
     + \sqrt{\epsilon} C e^{Ct} \| t \mathcal{L}_\epsilon^{\frac 12} \partial_t u_\epsilon \|^2_{H_\epsilon},
      \end{align*}
    which implies that
     \begin{align}
     \label{pflemma4311}
    {\frac 1{2}} {\frac d{dt}} \left (
    {\frac 1{D_1}}  \| t \tilde{P}^\epsilon \|_{H_\epsilon}^2
    +  t  \| \mathcal{L}_\epsilon^{\frac 12} P^\epsilon \|^2_{H_\epsilon}
    \right )
      & \le   {\frac 12} \| \mathcal{L}_\epsilon^{\frac 12} P^\epsilon \|^2_{H_\epsilon}
      +     C \left (
     \| t \tilde{P}^\epsilon \|_{H_\epsilon}^2 +\| t \tilde{Q}^\epsilon \|_{H_\epsilon}^2
     \right ) + \epsilon C e^{Ct}  \nonumber\\
     & + \epsilon^2  C  \left ( \| t u_t \|_{H^1}^2
     + \| t v_t \|_{H^1}^2 \right )
     + \sqrt{\epsilon} C e^{Ct} \| t \mathcal{L}_\epsilon^{\frac 12} \partial_t u_\epsilon \|^2_{H_\epsilon}.
      \end{align}
  Similarly, by equation \eqref{pflemma434}, we can show that
        \begin{align}
     \label{pflemma4312}
    {\frac 1{2}} {\frac d{dt}} \left (
    {\frac 1{D_2}}  \| t \tilde{Q}^\epsilon \|_{H_\epsilon}^2
    +  t  \| \mathcal{L}_\epsilon^{\frac 12} Q^\epsilon \|^2_{H_\epsilon}
    \right )
      & \le   {\frac 12} \| \mathcal{L}_\epsilon^{\frac 12} Q^\epsilon \|^2_{H_\epsilon}
      +     C \left (
     \| t \tilde{P}^\epsilon \|_{H_\epsilon}^2 +\| t \tilde{Q}^\epsilon \|_{H_\epsilon}^2
     \right ) + \epsilon C e^{Ct}  \nonumber\\
     & + \epsilon^2  C  \left ( \| t u_t \|_{H^1}^2
     + \| t v_t \|_{H^1}^2 \right )
     + \sqrt{\epsilon} C e^{Ct} \| t \mathcal{L}_\epsilon^{\frac 12} \partial_t v_\epsilon \|^2_{H_\epsilon}.
      \end{align}
      By \eqref{pflemma4311}-\eqref{pflemma4312} we find that
       \begin{align*}
       &  {\frac d{dt}} \left ({\frac 1{D_1}}  \| t \tilde{P}^\epsilon \|_{H_\epsilon}^2
    +  t  \| \mathcal{L}_\epsilon^{\frac 12} P^\epsilon \|^2_{H_\epsilon} +
      {\frac 1{D_2}}  \| t \tilde{Q}^\epsilon \|_{H_\epsilon}^2
    +  t  \| \mathcal{L}_\epsilon^{\frac 12} Q^\epsilon \|^2_{H_\epsilon} \right ) \\
      & \qquad  \le   C  \left (
       {\frac 1{D_1}}  \| t \tilde{P}^\epsilon \|_{H_\epsilon}^2
    +  t  \| \mathcal{L}_\epsilon^{\frac 12} P^\epsilon \|^2_{H_\epsilon} +
      {\frac 1{D_2}}  \| t \tilde{Q}^\epsilon \|_{H_\epsilon}^2
    +  t  \| \mathcal{L}_\epsilon^{\frac 12} Q^\epsilon \|^2_{H_\epsilon} \right )
       +  \| \mathcal{L}_\epsilon^{\frac 12} P^\epsilon \|^2_{H_\epsilon}
      +\| \mathcal{L}_\epsilon^{\frac 12} Q^\epsilon \|^2_{H_\epsilon}\\
       & \qquad + \epsilon C e^{Ct} + \epsilon^2  C  \left ( \| t u_t \|_{H^1}^2
     + \| t v_t \|_{H^1}^2 \right ) + \sqrt{\epsilon} C e^{Ct}
      \left ( \| t \mathcal{L}_\epsilon^{\frac 12} \partial_t u_\epsilon \|^2_{H_\epsilon}
     +\| t \mathcal{L}_\epsilon^{\frac 12} \partial_t v_\epsilon \|^2_{H_\epsilon} \right ),
      \end{align*}
      which, along with    Gronwall's lemma  and Lemmas \ref{lemma35}, \ref{lemma36}
      and \ref{lemma42}, implies Lemma \ref{lemma43}.       \end{proof}

      Let $(c_1^\epsilon, c_2^\epsilon, \Phi^\epsilon)$
      be the solutions of problem \eqref{newPNP}-\eqref{newBV}
      with initial datum       $(c_{1,0}, c_{2,0})$, and
      $(c_1, c_2, \Phi)$ be the solutions of
       problem \eqref{genlimPNP}-\eqref{genlimBV} with
       initial datum $(M(c_{1,0}),  M(c_{2,0}))$.
      Then as an immediate consequence of Lemma~\ref{lemma43}, we find the
      following estimates which are essential to prove the upper semi-continuity
      of the global attractors.

\begin{lem}
\label{lemma44}
There exists    $\epsilon_1 >0 $ such that, for
any $R>0$, there exists a constant $K$ depending on $R$ such that,
for any $0<\epsilon \le \epsilon_1$ and  $(c_{1,0}, c_{2,0})\in
{\tilde{\Sigma}} $ with $\|  (c_{1,0}, c_{2,0})   \|_{X_\epsilon \times
X_\epsilon} \le R$, the following holds:
\[   \left (  \|c_1^\epsilon (t)  -c_1 (t)  \|^2_{X_\epsilon}
    +    \|   c_2^\epsilon (t)   -c_2 (t) \|^2_{X_\epsilon}  \right )
    \le \sqrt{\epsilon} Ke^{Kt},  \quad t \ge 1.\]
  \end{lem}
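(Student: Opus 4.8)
\emph{Proof strategy.} The plan is to deduce this directly from Lemma~\ref{lemma43} by undoing the homogenization of Section~3.1 and removing the temporal weight $t$, the only real point being a reconciliation of two slightly different one-dimensional initial data.

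First I would unwind the change of unknowns. In the homogenized variables one has $u_\epsilon = L_1^\epsilon - c_1^\epsilon$, $v_\epsilon = L_2^\epsilon - c_2^\epsilon$ for the three-dimensional problem, and $u = L_1^0 - \widehat c_1$, $v = L_2^0 - \widehat c_2$ for the one-dimensional problem, where $(\widehat c_1,\widehat c_2,\widehat\Phi)$ denotes the solution of \eqref{genlimPNP}-\eqref{genlimBV} with initial datum $(L_1^0 - M(u_0),\, L_2^0 - M(v_0))$, $u_0 = L_1^\epsilon - c_{1,0}$, $v_0 = L_2^\epsilon - c_{2,0}$. With these identifications the quantities $P^\epsilon,Q^\epsilon$ of \eqref{diffvar} are exactly $-(c_1^\epsilon - \widehat c_1)$ and $-(c_2^\epsilon - \widehat c_2)$. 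Since $(c_{1,0},c_{2,0})\in{\tilde\Sigma}$ is equivalent to $(u_0,v_0)\in\Sigma_\epsilon$, and since $\|(c_{1,0},c_{2,0})\|_{X_\epsilon\times X_\epsilon}\le R$ together with the uniform $X_\epsilon$-bounds on $L_k^\epsilon$ coming from \eqref{unibdL} gives $\|(u_0,v_0)\|_{X_\epsilon\times X_\epsilon}\le R+C$, Lemma~\ref{lemma43} applies and yields, for $t\ge1$,
\[\|c_1^\epsilon(t)-\widehat c_1(t)\|^2_{X_\epsilon} + \|c_2^\epsilon(t)-\widehat c_2(t)\|^2_{X_\epsilon} = \|P^\epsilon(t)\|^2_{X_\epsilon} + \|Q^\epsilon(t)\|^2_{X_\epsilon} \le \tfrac1t\sqrt\epsilon Ke^{Kt} \le \sqrt\epsilon Ke^{Kt}.\]

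Next I would pass from $\widehat c_k$ to the $c_k$ in the statement, i.e. the solution of \eqref{genlimPNP}-\eqref{genlimBV} with initial datum $(M(c_{1,0}),M(c_{2,0}))$. The two initial data differ by $\widehat c_k(0)-c_k(0)=L_k^0-M(L_k^\epsilon)$. Because $L_k^\epsilon(x,y,z)=L_k(x,g(x,\epsilon)y,g(x,\epsilon)z,\epsilon)$ with $L_k(x,0,0,\epsilon)=L_k^0(x)$ and $L_k\in W^{1,\infty}$, while by \eqref{g_inequality} both $g$ and $g_x$ are $O(\epsilon)$ uniformly on $[0,1]$, a first-order Taylor expansion in the cross-sectional variables — exactly as in \eqref{pfprop4115}, with the extra $g_x=O(\epsilon)$ bound used to control the $x$-derivative (which commutes with $M$) — gives $\|L_k^0-M(L_k^\epsilon)\|_{H^1(0,1)}\le C\epsilon$. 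Problem \eqref{genlimPNP}-\eqref{genlimBV} is well-posed on the invariant region ${\tilde\Sigma}_0$ and its nonlinearity (with $\Phi$ determined from $c_1,c_2$ through the elliptic Poisson equation) is bounded there, so the standard energy estimate for the difference of two solutions together with Gronwall's lemma gives $\|\widehat c_k(t)-c_k(t)\|_{H^1(0,1)}\le Ce^{Ct}\epsilon$ for all $t\ge0$; since $\widehat c_k-c_k$ is independent of $(y,z)$ and vanishes at $x=0,1$, its $X_\epsilon$-norm is comparable (uniformly in $\epsilon$) to its $H^1(0,1)$-norm, so the same bound holds in $X_\epsilon$. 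A triangle inequality then gives, for $t\ge1$ and $0<\epsilon\le\epsilon_1\le1$,
\[\|c_k^\epsilon(t)-c_k(t)\|_{X_\epsilon} \le \|c_k^\epsilon(t)-\widehat c_k(t)\|_{X_\epsilon} + \|\widehat c_k(t)-c_k(t)\|_{X_\epsilon} \le \epsilon^{1/4}\sqrt K e^{Kt/2} + C\epsilon e^{Ct} \le C\epsilon^{1/4}e^{Kt},\]
and squaring and summing over $k=1,2$ (absorbing constants into $K$) yields the claim.

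The only genuinely new work beyond Lemma~\ref{lemma43} is this last reconciliation — verifying $\|M(L_k^\epsilon)-L_k^0\|_{H^1(0,1)}=O(\epsilon)$ and invoking continuous dependence on initial data for the one-dimensional limiting problem — and I expect that to be the (minor) main obstacle; everything else is bookkeeping, namely inverting the change of coordinates of Section~3.1 and dividing the weighted estimate of Lemma~\ref{lemma43} by $t\ge1$.
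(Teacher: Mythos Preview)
Your argument is correct and, in fact, more careful than the paper's own treatment. The paper states Lemma~\ref{lemma44} ``as an immediate consequence of Lemma~\ref{lemma43}'' without further comment, tacitly identifying $P^\epsilon,Q^\epsilon$ with $-(c_1^\epsilon-c_1),-(c_2^\epsilon-c_2)$. You noticed that this identification is not quite exact: the one-dimensional solution appearing in Lemma~\ref{lemma43} has initial datum $(M(u_0),M(v_0))$, which under the inverse homogenization corresponds to $c_k(0)=L_k^0-M(L_k^\epsilon)+M(c_{k,0})$ rather than $M(c_{k,0})$. The paper silently absorbs this $O(\epsilon)$ discrepancy; you make it explicit by introducing the auxiliary solution $\widehat c_k$, bounding $\|L_k^0-M(L_k^\epsilon)\|_{H^1(0,1)}=O(\epsilon)$ via the same Taylor argument as \eqref{pfprop4115}, and invoking continuous dependence for the limiting problem.

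So the approaches agree in spirit---both reduce to Lemma~\ref{lemma43} plus bookkeeping---but yours patches a small gap the paper leaves unaddressed. The extra reconciliation step you supply is exactly what is needed to make the ``immediate consequence'' rigorous, and the ingredients you use (smoothness of $L_k$, the $g,g_x=O(\epsilon)$ bounds from \eqref{g_inequality}, and the well-posedness statement for \eqref{limPNP}--\eqref{limIC} recorded after \eqref{limIR}) are all available.
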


We  are now in a position to prove the upper semi-continuity of global attractors.

\noindent
{\bf Proof of Theorem~\ref{uppersc}}. \
Let $T^\epsilon(t)_{t \ge 0}$ and $T^0(t)_{t\ge 0}$ be
the solution operators of problem  \eqref{newPNP}-\eqref{newBV}
     and
       problem \eqref{genlimPNP}-\eqref{genlimBV}, respectively.
       Then it follows from Proposition~\ref{proposition33} that  there is a constant $R>0$ (independent of
$\epsilon$) such that
\[  \| (c_1, c_2)  \|_{X_{\epsilon} \times X_\epsilon} \le R,\; \mbox{ for all }\; (c_1, c_2) \in
{\mathcal A}_{\epsilon}.\]
For the given $\eta>0$,
since ${\mathcal A}_0$ is the  global attractor of $T^0(t)$, there exists
$\tau_0=\tau_0(\eta,R)\ge 1 $ such that, for any $t\ge \tau_0$,
\[\inf_{z_0\in {\mathcal A}_0}\|T^0(t)(Mz)-z_0\|_{X_\epsilon \times X_\epsilon}\le \frac{\eta}{2},\]
for any $z=(c_1, c_2)\in {\mathcal A}_{\epsilon}$.
On the other hand, by Lemma~\ref{lemma44} we find that
   \[\|T^{\epsilon}( \tau_0)z -T^0( \tau_0)(Mz )\|_{X_{\epsilon} \times X_\epsilon }\le
   \epsilon^{\frac 14} K(R)e^{K(R)\tau_0},\]
 for some constant $K(R)$. Therefore, we obtain that, for any $z=(c_1, c_2)\in {\mathcal A}_{\epsilon}$:
  \[\inf_{z_0\in {\mathcal
 A}_0}\|T^{\epsilon}(\tau_0)z -z_0\|_{X_{\epsilon} \times X_\epsilon }\le
 \frac{\eta}{2}+
 \epsilon^{\frac 14} K(R)e^{K(R)\tau_0},\]
 which implies that, for $\epsilon>0$ small enough:
 \[ {\rm dist}_{X_{\epsilon} \times X_{\epsilon}}  \left (T^{\epsilon}(\tau_0) {\mathcal A}_{\epsilon},  {\mathcal A}_0 \right )\le \eta.\]
The proof is completed since $T^{\epsilon}(\tau_0) {\mathcal A}_{\epsilon} =  {\mathcal A}_{\epsilon}. $

\section{Steady-states for the one-dimensional limiting PNP system}
\setcounter{equation}{0}

Many mathematical works have been done on the existence, uniqueness and
qualitative properties of boundary value problems even
for high dimensional  systems and  algorithms have been developed
toward numerical approximations (see, e.g.~\cite{Hom, Jer, PJ, JeK}).
Under the assumption that $\mu\ll 1$, the problem can be viewed as
a singularly perturbed system. Typical solutions of singularly perturbed
systems exhibit different time scales; for example, boundary and internal
layers (inner solutions) evolve at fast pace and regular layers
(outer solutions) vary slowly. For the boundary value problem~\eqref{SSPNP}
and~\eqref{SSBV},   there are two boundary layers one at each end. Physically,
near boundaries $x=0$ and $x=1$, the potential function $\phi(x)$ and the
concentration functions $c_1(x)$ and $c_2(x)$ exhibit a large gradient or a
sharp change. In~\cite{BCEJ}, for $\alpha_1=\alpha_2=1$,
the boundary value problem for the direct (with $h(x)=1$ in \eqref{SSPNP}) one-dimensional PNP system  was studied   using the method
of matched asymptotic expansions as well as numerical
simulations, which provide a good quantitative and qualitative
 understanding of the problem.  In~\cite{Liu1},   geometric singular perturbation theory
(see, e.g.~\cite{Fen, Jon, JK, Liu}) was applied to the study of this singular boundary value problem.
The treatment for the limiting one-dimensional PNP system \eqref{SSPNP}  carrying  the geometric information of the three-dimensional channel follows that in~\cite{Liu1}.

It is convenient to study an equivalent connecting problem to the boundary value problem~\eqref{SSPNP} and~\eqref{SSBV}.
 Let $B_L$ and $B_R$ be the subsets of $\bbR^7$ defined, respectively, by
\begin{align}\label{left-right}\begin{split}
B_L=&\{\phi=\phi_0,\;v=-h(0)(\alpha_1 l_1-\alpha_2 l_2),\;w=\alpha_1^2 l_1+\alpha_2^2 l_2,\;
\tau=0\},\\
B_R=&\{ \phi=0,\;v=-h(1)(\alpha_1 r_1-\alpha_2 r_2),\;w=\alpha_1^2 r_1+\alpha_2^2r_2,\;
\tau=1\}.
\end{split}
\end{align}
The boundary value problem is then equivalent to the following
{\em connecting problem}: finding a solution of~\eqref{slow} from
$B_L$ to $B_R$.

For $\mu>0$, let $M_L^{\mu}$ be the  union  of all forward orbits
of~\eqref{slow} starting from $B_L$ and let $M_R^{\mu}$ be the union  of
all backward  orbits starting from $B_R$. To obtain the existence and (local)
uniqueness of a solution for the connecting problem,   it thus suffices to show
$M_L^{\mu}$ and $M_R^{\mu}$ intersect  transversally.   The
intersection is exactly the orbit of  a solution of the boundary value problem,
and the transversality implies  the local uniqueness.  The strategy is to
obtain a singular orbit and track the evolution of $M_L^{\mu}$ and
$M_R^{\mu}$ along the singular orbit. As discussed in the introduction, a
singular orbit will be  a union of orbits of subsystems of~\eqref{slow} with
different time scales.

The   boundary layers will be two orbits of~\eqref{limfast}:
one from $B_L$ to ${\mathcal Z}_0$ in forward time along the stable manifold
of ${\mathcal Z}_0$ and  the other from  $B_R$ to ${\mathcal Z}_0$ in backward
time along the unstable manifold of ${\mathcal Z}_0$. The two boundary layers
will be connected by a regular layer  on ${\mathcal Z}_0$, which is an orbit of
a limiting system of~\eqref{slow}.  The next two subsections are devoted to the
study of boundary layers and regular layers.

\subsection{Fast dynamics and boundary layers}\label{inner}
We start with the study of boundary layers governed by system~\eqref{limfast}.
This system has many invariant structures  that are useful for characterizing
the global dynamics.

The slow manifold ${\mathcal Z}_0=\{u=v=0\}$ consisting
of entirely equilibria of system~\eqref{limfast} is a $5$-dimensional
manifold of the phase space $\bbR^7$. For each  equilibrium
$z=(\phi, 0, 0, w, J_1,J_2, \tau)\in {\mathcal Z}_0$, the linearization of
system~\eqref{limfast} has five zero eigenvalues corresponding to the
dimension of ${\mathcal Z}_0$, and  two  eigenvalues in directions normal
to ${\mathcal Z}_0$. The latter two eigenvalues and their associated
eigenvectors  are given  by
\begin{equation}\label{e-value}
\lambda_{\pm}=\pm\sqrt{w} \;\mbox{ and }\;
n_{\pm}=\left((\pm\sqrt{w})^{-1},1,\pm\sqrt{w},
\pm(\alpha_2-\alpha_1)\sqrt{w}, 0, 0, 0\right)^{\tau}.
\end{equation}
Thus, every equilibrium has a one-dimensional  stable manifold and a
one-dimensional unstable manifold. The global configurations of the stable and
unstable manifolds will be needed for the boundary layer behavior.
For any constants $J_1^*$, $J_2^*$ and
$\tau^*$, the set ${\mathcal N}=\{J_1=J_1^*, J_2=J_2^*, \tau=\tau^*\}$ is a
$4$-dimensional invariant subspace of the phase space $\bbR^7$.

Surprisingly,   system~\eqref{limfast} possesses   a complete set of integrals
with which the dynamics  can be fully analyzed; in particular,
the stable and unstable manifolds can be  characterized and
the behavior of boundary layers can be described in detail.

\begin{prop}\label{s-u-inter} (i) System~\eqref{limfast} has a complete set
of six integrals given by
\begin{align*}\label{int}\begin{split}
 H_1&= w-\frac{\alpha_2-\alpha_1}{h(\tau)}v-\frac{\alpha_1\alpha_2}{2h^2(\tau)}u^2,\;
H_2=\phi-\frac{\ln |\alpha_1 v/{h(\tau)}+w|}{\alpha_2}, \\
  H_3=&\phi+\frac{\ln |\alpha_2 v/{ h(\tau)}-w|}{\alpha_1},\;
 H_4=J_1,\; H_5=J_2 \;\mbox{ and }\;  H_6=\tau,
\end{split}
\end{align*}
where the argument of $H_i$'s is $(\phi,u,v,w,J_1,J_2,\tau)$.

(ii) The stable and unstable manifolds $W^s({\mathcal Z}_0)$ and
$W^u({\mathcal Z}_0)$ of ${\mathcal Z}_0$ are characterized as follows:
\[W^s({\mathcal Z}_0)=\cup\{W^s(z^*):\,z^*\in {\mathcal Z}_0\}\;\mbox{ and }\;
 W^u({\mathcal Z}_0)=\cup\{W^u(z^*):\,z^*\in {\mathcal Z}_0\}\]
 and, for $z^*=(\phi^*,0,0,w^*,J_1^*,J_2^*,\tau^*)\in {\mathcal Z}_0$,
 a point $z=(\phi,u,v,w,J_1,J_2,\tau)\in W^s(z^*)\cup W^u(z^*)$ if and only if
\begin{align*}
H_1(z)= &w^*,\;H_2(z)=\phi^*-\frac{\ln w^*}{\alpha_2},
 \;H_3(z)=\phi^*+\frac{\ln w^*}{\alpha_1}, \\
 J_1= & J_1^*, \; J_2=  J_2^*, \;\tau=\tau^*.
 \end{align*}

(iii) The  stable  manifold $W^s({\mathcal Z}_0)$ intersects $B_L$
transversally at points with
\begin{equation}\label{s-U}
u=-[\mbox{sgn }(\alpha_2 l_2-\alpha_1 l_1)]\sqrt{2}h(0)
\sqrt{ l_1+l_2-\frac{(\alpha_1+\alpha_2)
(\alpha_1 l_1)^{\frac{\alpha_2}{\alpha_1+\alpha_2}}
(\alpha_2 l_2)^{\frac{\alpha_1}{\alpha_1+\alpha_2}} }
{\alpha_1\alpha_2}}
\end{equation}
and arbitrary $J_1$ and $J_2$, where $\mbox{sgn}$ is the sign function. The
unstable  manifold $W^u({\mathcal Z}_0)$ intersects  $B_R$ transversally
at points with
\begin{equation}\label{u-U}
u=[\mbox{sgn }(\alpha_2 r_2-\alpha_1 r_1)]\sqrt{2}h(1)
\sqrt{r_1+r_2-\frac{(\alpha_1+\alpha_2)
(\alpha_1 r_1)^{\frac{\alpha_2}{\alpha_1+\alpha_2}}
(\alpha_2 r_2)^{\frac{\alpha_1}{\alpha_1+\alpha_2}} }
{\alpha_1\alpha_2}}\end{equation}
and arbitrary $J_1$ and $J_2$. Let $N_L=B_L\cap W^s({\mathcal Z}_0)$ and
$N_R=B_R\cap W^u({\mathcal Z}_0)$.   Then,
\[\omega(N_L)=\left\{\left(\phi_0+\frac{1}{\alpha_1+\alpha_2}
   \ln\frac{\alpha_1 l_1}{\alpha_2 l_2},
 0, 0,(\alpha_1+\alpha_2)(\alpha_1 l_1)^{\frac{\alpha_2}{\alpha_1+\alpha_2}}
(\alpha_2 l_2)^{\frac{\alpha_1}{\alpha_1+\alpha_2}},J_1,J_2, 0\right)\right\},\]
\[\alpha(N_R)=\left\{\left(\frac{1}{\alpha_1+\alpha_2}
  \ln\frac{\alpha_1 r_1}{\alpha_2 r_2},
 0, 0,(\alpha_1+\alpha_2)(\alpha_1 r_1)^{\frac{\alpha_2}{\alpha_1+\alpha_2}}
(\alpha_2 r_2)^{\frac{\alpha_1}{\alpha_1+\alpha_2}},J_1,J_2, 1\right)\right\}\]
for all $J_1$ and $J_2$.
 \end{prop}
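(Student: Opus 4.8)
The plan is to establish (i) by a direct check, (ii) by combining the integrals with the local saddle structure of $\mathcal{Z}_0$, and (iii) by substituting the boundary data into the characterization obtained in (ii).

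\textbf{Part (i).} Since the last three equations of~\eqref{limfast} read $J_1'=J_2'=\tau'=0$, the functions $H_4,H_5,H_6$ are constant and, along every orbit, $h(\tau)$ may be treated as a constant. Writing $A=\alpha_1 v/h(\tau)+w$ and $B=\alpha_2 v/h(\tau)-w$, a direct computation from the equations for $v'$ and $w'$ gives $A'=(\alpha_2 u/h)A$ and $B'=-(\alpha_1 u/h)B$; together with $\phi'=u/h$ these identities yield $H_1'=H_2'=H_3'=0$ at once. To justify "complete set" I would record the Jacobian of $(H_1,H_2,H_3)$ in $(\phi,u,v,w)$: the $2\times2$ minor of the rows $\nabla H_2,\nabla H_3$ in the columns $(v,w)$ equals $\tfrac{\alpha_1+\alpha_2}{h\alpha_1\alpha_2 AB}\neq0$, while $\partial_u H_2=\partial_u H_3=0$ and $\partial_u H_1=-\alpha_1\alpha_2 u/h^2\neq0$; hence $\nabla H_1,\dots,\nabla H_6$ are linearly independent on the open dense set $\{u\neq0,\ AB\neq0\}$.

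\textbf{Part (ii).} The inclusion ``$\subseteq$'' is immediate: on an orbit lying in $W^s(z^*)$ or $W^u(z^*)$ each $H_i$ equals its value at $z^*=(\phi^*,0,0,w^*,J_1^*,J_2^*,\tau^*)$, and since $w^*>0$ these values are precisely $H_1=w^*$, $H_2=\phi^*-\alpha_2^{-1}\ln w^*$, $H_3=\phi^*+\alpha_1^{-1}\ln w^*$, $J_i=J_i^*$, $\tau=\tau^*$. For ``$\supseteq$'', fix $(J_1,J_2,\tau)=(J_1^*,J_2^*,\tau^*)$ and restrict to the invariant $4$-space $\mathcal N$; inside it let $\mathcal L$ be the (relevant component of the) set cut out by the three equations above, on which $A>0$, $-B>0$, and hence $|A|^{\alpha_1}|B|^{\alpha_2}=(w^*)^{\alpha_1+\alpha_2}$ and $w=\tfrac{\alpha_2A+\alpha_1(-B)}{\alpha_1+\alpha_2}>0$. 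Using $H_1=w^*$ one shows the only point of $\mathcal L$ with $u=0$ (equivalently $A=-B$, by weighted AM--GM applied to $\alpha_1A,\alpha_2(-B)$) is $z^*$. Since $A'=(\alpha_2u/h)A$, along any non-constant orbit in $\mathcal L$ the quantity $A$ is strictly monotone in $\xi$ and never equals $w^*$ (which would put the orbit through $z^*$); its monotone limits as $\xi\to\pm\infty$ must be fixed-point values, hence equal to $w^*$, so that at one of the two ends the orbit converges to $z^*$. Thus $z\in W^s(z^*)\cup W^u(z^*)$, where I use the hyperbolicity of $z^*$ recorded in~\eqref{e-value} (giving the $1$-dimensional $W^s(z^*)$, $W^u(z^*)$).

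\textbf{Part (iii).} For the $B_L$ statement I would substitute $\phi=\phi_0$, $v=v_L:=-h(0)(\alpha_1l_1-\alpha_2l_2)$, $w=w_L:=\alpha_1^2l_1+\alpha_2^2l_2$ into the characterization of (ii). Using $\alpha_1 v_L/h(0)+w_L=(\alpha_1+\alpha_2)\alpha_2l_2$ and $\alpha_2 v_L/h(0)-w_L=-(\alpha_1+\alpha_2)\alpha_1l_1$, the relation $H_3-H_2=\tfrac{\alpha_1+\alpha_2}{\alpha_1\alpha_2}\ln w^*$ gives $w^*=(\alpha_1+\alpha_2)(\alpha_1l_1)^{\alpha_2/(\alpha_1+\alpha_2)}(\alpha_2l_2)^{\alpha_1/(\alpha_1+\alpha_2)}$ and then $\phi^*=\phi_0+\tfrac{1}{\alpha_1+\alpha_2}\ln\tfrac{\alpha_1l_1}{\alpha_2l_2}$, while $H_1=w^*$ gives $u^2=2h(0)^2\bigl(l_1+l_2-\tfrac{w^*}{\alpha_1\alpha_2}\bigr)$; the bracket is $\ge0$ by the weighted AM--GM inequality for $\alpha_1l_1,\alpha_2l_2$ with weights $\tfrac1{\alpha_1},\tfrac1{\alpha_2}$, and is strictly positive exactly when $\alpha_1l_1\neq\alpha_2l_2$, which yields~\eqref{s-U} up to sign. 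The sign is forced because $u$ cannot vanish on $W^s(z^*)\setminus\{z^*\}$: there $w>0$, so $u''=wu$ would drive $|u|$ to grow and prevent convergence to $z^*$; hence $u$ has a fixed sign along the stable orbit, and since $\phi'=u/h(0)$ makes $\phi$ run monotonically from $\phi_0$ to $\phi^*$, $\operatorname{sgn}u=\operatorname{sgn}(\phi^*-\phi_0)=-\operatorname{sgn}(\alpha_2l_2-\alpha_1l_1)$. Transversality of $W^s(\mathcal Z_0)$ with $B_L$ follows since near such a point $W^s(\mathcal Z_0)$ is an open piece of the hypersurface $\{G=0\}$, $G:=H_3-H_2-\tfrac{\alpha_1+\alpha_2}{\alpha_1\alpha_2}\ln H_1$, whose differential has $u$-component $\partial_uG=\tfrac{(\alpha_1+\alpha_2)u}{h(0)^2H_1}\neq0$, while $T B_L=\operatorname{span}(\partial_u,\partial_{J_1},\partial_{J_2})$, so $dG$ does not annihilate $TB_L$. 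Finally $N_L$ is parametrized by $(J_1,J_2)$ with $u$ fixed as above, and each of its points flows forward to the corresponding $z^*$, so $\omega(N_L)$ is exactly the displayed set; the assertions for $B_R$, $W^u(\mathcal Z_0)$ and $\alpha(N_R)$ follow by reversing time. I expect the main obstacle to be the ``$\supseteq$'' part of (ii): controlling the reduced one-dimensional flow on the level set $\mathcal L$, which is singular along $\mathcal Z_0$, so as to conclude convergence to the single equilibrium $z^*$, and keeping the sign constraints on $A$ and $B$ under control throughout.
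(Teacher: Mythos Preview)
Your proposal is correct and follows essentially the same route as the paper: verify the integrals directly, read off the stable/unstable manifold characterization from the conserved quantities, and then plug in the $B_L$ (resp.\ $B_R$) data to locate the intersection and the $\omega$- (resp.\ $\alpha$-) limit set. The paper is in fact considerably terser than you are---it dismisses (i) as ``can be verified directly,'' (ii) as ``a simple consequence of (i),'' and explicitly \emph{omits} the transversality check in (iii)---so your write-up supplies detail the paper leaves out.

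Two small differences are worth noting. First, for the sign of $u$ in~\eqref{s-U} the paper argues via the stable eigenvector $n_-$ in~\eqref{e-value}, whose $u$- and $v$-components have opposite signs; you instead use the second-order relation $u''=wu$ with $w>0$ together with the monotonicity of $\phi$. Both are valid and lead to the same conclusion $\operatorname{sgn}u=-\operatorname{sgn}v^0$. Second, your transversality argument via the scalar function $G=H_3-H_2-\tfrac{\alpha_1+\alpha_2}{\alpha_1\alpha_2}\ln H_1$ is a clean way to make explicit what the paper declares ``slightly complicated but straightforward'' and omits. One caution: in your ``$\supseteq$'' step for (ii), the assertion that the monotone limits of $A$ ``must be fixed-point values'' needs the observation that at least one of the limits is finite and positive. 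This follows because on each branch of $\mathcal L\setminus\{z^*\}$ the map $\phi\mapsto(A,-B,u^2)$ is explicit (from $H_2,H_3,H_1$) and shows $u\to0$ as $\phi\to\phi^*$, forcing convergence to $z^*$ in one time direction; you have all the ingredients, but you might make this step explicit.
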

\begin{proof} The statement (i) can be verified directly.
The statement~(ii) is a simple consequence of~(i) together with the fact that
$\phi(\xi)\to \phi^*$, $w(\xi)\to w^*$, $u(\xi)\to 0$ and $v(\xi)\to 0$
as $\xi\to \infty$ for the stable manifold and
as $\xi\to -\infty$ for the unstable manifold.

For the statement (iii), we present only the proof regarding the intersection
of  $W^s({\mathcal Z}_0)$ and $B_L$.   Suppose
\begin{align*}
z^0=(\phi^0,u^0,v^0,w^0,J^0_1,J^0_2,0)
=(\phi_0, u^0,h(0)(\alpha_2 l_2-\alpha_1 l_1), \alpha_1^2 l_1+\alpha_2^2 l_2, J^0_1,J^0_2,0)
\end{align*}
is a point in $B_L\cap W^s({\mathcal Z}_0)$. Then, using the integrals $H_1$,
$H_2$ and $H_3$, the solution
$z(\xi)=(\phi(\xi),u(\xi),v(\xi),w(\xi),J^0_1,J^0_2,0)$
of system~\eqref{limfast} with  initial condition $z(0)=z^0$ satisfies
\begin{align*}
H_1(z(\xi))=& w(\xi)-\frac{\alpha_2-\alpha_1}{h(0)}v(\xi)
   -\frac{\alpha_1\alpha_2}{2h^2(0)}u^2(\xi)=A,\\
H_2(z(\xi))=&\phi(\xi)-\frac{1}{\alpha_2}\ln |\alpha_1 v(\xi)/{h(0)}+w(\xi)|=B,\\
H_3(z(\xi))=&\phi(\xi)+\frac{1}{\alpha_1}\ln |\alpha_2 v(\xi)/{h(0)}-w(\xi)| =C
\end{align*}
for some constants $A$, $B$ and $C$, and for all $\xi$. From the initial
condition, we get
\[B=\phi_0-\frac{\ln (\alpha_1+\alpha_2)}{\alpha_2}-\frac{\ln (\alpha_2 l_2)}{\alpha_2}
\;\mbox{ and }\;
C=\phi_0+\frac{\ln (\alpha_1+\alpha_2)}{\alpha_1}+\frac{\ln (\alpha_1 l_1)}{\alpha_1}.\]
Since  $u(\xi)\to 0$ and $v(\xi)\to 0$ as $\xi\to +\infty$, we have that
$ w(+\infty)=A$  and
\[C-B=\frac{\alpha_1+\alpha_2}{\alpha_1\alpha_2}\ln w(+\infty)
=\frac{\alpha_1+\alpha_2}{\alpha_1\alpha_2}\ln A.\]
Hence,
\[w(+\infty)=A=(\alpha_1+\alpha_2)(\alpha_1 l_1)^{\frac{\alpha_2}{\alpha_1+\alpha_2}}
(\alpha_2 l_2)^{\frac{\alpha_1}{\alpha_1+\alpha_2}}.\]
Therefore,
\[u^0=-[\mbox{sgn }(v^0)]\sqrt{2}h(0)
  \sqrt{ l_1+l_2-\frac{A}{\alpha_1\alpha_2}} \;\mbox{ and }
\;\phi(+\infty)=\phi_0+\frac{1}{\alpha_1+\alpha_2}\ln\frac{\alpha_1 l_1}{\alpha_2 l_2}.\]
The choice of the sign for $u^0$ comes from the consideration that
the stable eigenvector $n_-$ in~\eqref{e-value} has $u$ and $v$ components
with opposite signs.
Thus, $B_L$ and $W^s({\mathcal Z}_0)$ intersect at the points with $u=u^0$
given above, and all $J_1$ and $J_2$. If $N_L=B_L\cap W^s({\mathcal Z}_0)$,
then $\omega(N_L)=\{(\phi(+\infty), 0,0,w(+\infty), J_1,J_2,0)\}$. The
above formulas for $\phi(+\infty)$ and $w(+\infty)=A$ gives the desired
characterization of  $\omega(N_L)$. Lastly, since the stable manifold is
completely characterized, one can compute its tangent space at each
intersection point to verify the transversality of the intersection.
It is slightly complicated but straightforward. We will omit the detail here.
\end{proof}

Part (iii) of this result implies that the boundary layer on the left end will be
an orbit of~\eqref{limfast} from $(\phi_0,u_L, \alpha_2 l_2-\alpha_1 l_1,
\alpha_1^2l_1+ \alpha_2^2l_2, J_1, J_2, 0)\in B_L$ to the point
  \[z_L=\left(\phi_0+\frac{1}{\alpha_1+ \alpha_2}
   \ln\frac{\alpha_1 l_1}{  \alpha_2 l_2},
 0, 0,(\alpha_1+  \alpha_2)(\alpha_1 l_1)^{\frac{ \alpha_2}{\alpha_1+ \alpha_2}}
(  \alpha_2 l_2)^{\frac{\alpha_1}{\alpha_1+  \alpha_2}},J_1,J_2, 0\right)\in {\mathcal Z}_0,\]
where $U_L$ is given by the display~\eqref{s-U} and $I_1$ and $I_2$ are
arbitrary at this moment; and that on the right end will be
a backward orbit of~\eqref{limfast} from the point
$(0,u_R,   \alpha_2 r_2-\alpha_1 r_1,
\alpha_1^2r_1+  \alpha_2^2r_2, J_1, J_2, 1)\in B_R$ to the point
\[z_R= \left(\frac{1}{\alpha_1+  \alpha_2}
  \ln\frac{\alpha_1 r_1}{  \alpha_2r_2},
 0, 0,(\alpha_1+  \alpha_2)(\alpha_1 r_1)^{\frac{  \alpha_2}{\alpha_1+  \alpha_2}}
(  \alpha_2 r_2)^{\frac{\alpha_1}{\alpha_1+ \alpha_2}},J_1,J_2, 1\right)\in {\mathcal Z}_0,\]
where $u_R$ is given by the display~\eqref{u-U} and $J_1$ and $J_2$ are
arbitrary at this moment.  It turns out that there is a unique
pair of numbers $J_1$ and $J_2$ so that the corresponding points $z_L$ and $z_R$
can be connected by a regular layer solution on ${\mathcal Z}_0$. The regular
orbit together with the two  boundary layer orbits provide the singular orbit.

\subsection{Slow dynamics and regular layers}\label{outer}
 We  now examine the slow flow in the vicinity of the
slow manifold ${\mathcal Z}_0=\{u=v=0\}$ for regular layers.
Note that system \eqref{limslow} resulting from \eqref{slow} by setting   $\mu=0$  reduces to
$u=v=0$ and
\[\dot J_1=0,\; \dot J_2=0,\; \dot \tau=1.\]
The information on $\phi$ and $w$ is lost. This indicates that
the slow flow in the vicinity of  ${\mathcal Z}_0$  is itself a
singular perturbation problem.
 To see this,  we zoom into an $O(\mu)$-neighborhood of ${\mathcal Z}_0$
by blowing up the $u$ and $v$ coordinates; that is, we make a scaling
$u=\mu p$ and $v=\mu q$. System~\eqref{slow}  becomes
\begin{align}\label{newslow}\begin{split}
  \dot \phi=& \frac{1}{h(\tau)}p,\quad
  \mu \dot p= q, \quad
  \mu \dot q=pw+\mu\frac{h_{\tau}(\tau)}{h(\tau)}q +(\alpha_1 J_1-  \alpha_2 J_2),\\
\dot w=& \mu \frac{\alpha_1 \alpha_2 }{h^2(\tau)} pq
   +\frac{\alpha_2-\alpha_1}{h(\tau)}pw-\frac{\alpha_1^2J_1+  \alpha_2^2J_2}{h(\tau)},\\
 \dot J_1=& 0,\quad \dot J_2=0,\quad \dot \tau=1,
 \end{split}
 \end{align}
which is indeed a singular perturbation problem.  When $\mu=0$, the system
 reduces to
\begin{align}\label{limnewslow}\begin{split}
  \dot \phi=& \frac{1}{h(\tau)}p,\quad
  0=  q, \quad
   0=pw+(\alpha_1 J_1-  \alpha_2 J_2),\\
   \dot w=& \frac{\alpha_2-\alpha_1}{h(\tau)}pw
   -\frac{\alpha_1^2J_1+\alpha_2^2 J_2}{h(\tau)},\\
   \dot J_1=& 0,\quad \dot J_2=0,\quad \dot \tau=1.
 \end{split}
 \end{align}
Dynamics of $\phi$ and $w$ survives in this limiting process.
 For this system, the slow manifold is
\[{\mathcal S}_0=\left\{p=\frac{\alpha_2 J_2-\alpha_1 J_1}{w},\; q=0\right\}.\]
The corresponding fast system   is
 \begin{align}\label{newfast}\begin{split}
  \phi'=&  \mu \frac{p}{h(\tau)},\quad
   p'=   q,\quad
   q'=pw+(\alpha_1 J_1- \alpha_2 J_2)+\mu\frac{h_{\tau}(\tau)}{h(\tau)}q,\\
   w'=&\mu^2\frac{\alpha_1 \alpha_2 }{h^2(\tau)}
  pq+\mu\frac{ \alpha_2-\alpha_1}{h(\tau)}pw
-\mu\frac{\alpha_1^2J_1+  \alpha_2^2J_2}{h(\tau)},\\
   J_1'=&0,\quad J_2'=0,\quad \tau'=0.
 \end{split}
 \end{align}
The limiting system of~\eqref{newfast}  when $\mu= 0$ is
\begin{align}\label{limnewfast}\begin{split}
  \phi'=& 0,\quad
   p'=   q,\quad
   q'=pw+(\alpha_1 J_1-\alpha_2 J_2),\\
   w'= &0,\quad
   J_1'=0,\quad J_2'=0,\quad \tau'=0.
 \end{split}
 \end{align}
 The slow manifold ${\mathcal S}_0$ is the set of equilibria
 of~\eqref{limnewfast}. The eigenvalues normal to ${\mathcal S}_0$
are  $\lambda_{\pm}(p)=\pm\sqrt{w}$.  In particular, the slow manifold
${\mathcal S}_0$ is normally hyperbolic, and hence, it persists for
system~\eqref{newfast} for $\mu>0$ small (see~\cite{Fen}).

The limiting slow dynamic on ${\mathcal S}_0$ is governed by
system~\eqref{limnewslow}, which reads
  \[\dot \phi= \frac{\alpha_2J_2-\alpha_1 J_1}{h(\tau)w},
  \quad \dot w=-\frac{\alpha_1 \alpha_2 (J_1+J_2)}{h(\tau)},\quad \dot J_i=0,
  \quad \dot \tau=1.\]
 The general solution is characterized as: $J_1$ and $J_2$ are
 arbitrary constants, and
\begin{align}\label{genslow}\begin{split}
\tau(x)=&\tau_0+x,\quad
w(x)=w_0-\alpha_1\alpha_2(J_1+J_2)\int_0^x\frac{1}{h(\tau_0+s)}\,ds,\\
\phi(x)=&\nu_0+ (\alpha_2 J_2-\alpha_1 J_1)\int_0^x\frac{1}{h(\tau_0+s)w(s)}\,ds\\
=&\nu_0-\frac{\alpha_2 J_2-\alpha_1 J_1}{\alpha_1 \alpha_2(J_1+J_2)}
\ln\left|1-\frac{\alpha_1 \alpha_2 (J_1+J_2)}{w_0}\int_0^x\frac{1}{h(\tau_0+s)}\,ds\right|.
\end{split}
\end{align}
where $\tau_0=\tau(0)$, $\phi(0)=\nu_0$ and $w(0)=w_0$.
Note that, if $J_1+J_2=0$, then $w(x)=w_0$ and
\[\phi(x)=\nu_0+\frac{  \alpha_2 J_2-\alpha_1J_1}{w_0}\int_0^x\frac{1}{h(\tau_0+s)}\,ds.\]
 The latter is the limit
of $\phi(x)$ in~\eqref{genslow} as $J_1+J_2\to 0$. We thus use the unified
formula~\eqref{genslow}  even if $J_1+J_2=0$.

To identify the slow portion of the singular orbit on ${\mathcal S}_0$, we need
to examine the $\omega$-limit (resp. the $\alpha$-limit) set of
$M^{\mu}_L\cap W^s({\mathcal S}_0)$
(resp. $M^{\mu}_R\cap W^u({\mathcal S}_0)$) as $\mu\to 0$.
To do this, we fix an $O(1)$-neighborhood of ${\mathcal S}_0$.
In terms of $U$ and $V$, this neighborhood is of order $O(\mu)$. For
$\mu>0$ small, the time taken in terms of $\xi$ for $M^{\mu}_L$ and
$M^{\mu}_R$ to evolve to any $O(\mu)$-neighborhood of $\{u=v=0\}$ is
of order $O(\mu|\ln\mu|)$. Thus, the $\lambda$-Lemma~(\cite{Den})
implies that
$M^{\mu}_L$ (resp. $M^{\mu}_R$) is $C^1$ $O(\mu)$-close to
$M^{0}_L$ (resp. $M^{0}_R$) in any $O(\mu)$-neighborhood of $\{u=v=0\}$.
Therefore, in an $O(1)$-neighborhood of ${\mathcal S}_0$ in terms of $p$ and
$q$, $M^{\mu}_L$ (resp. $M^{\mu}_R$) intersects $W^s({\mathcal S}_0)$
(resp. $W^u({\mathcal S}_0)$) transversally. And, by abusing the notations, if
 $N_L=M^{0}_L\cap W^s({\mathcal S}_0)$ and $N_R=M^{0}_R\cap W^u({\mathcal
 S}_0)$, then $\omega(N_L)$ and $\alpha(N_R)$ have the same descriptions
 as those in Proposition~\ref{s-u-inter} with $u=v=0$ replacing by
 $p=(\alpha_2J_2-\alpha_1 J_1)/w$ and $q=0$.

The slow orbit should be  one given by~\eqref{genslow}  that connects
$\omega(N_L)$ and $\alpha(N_R)$. Let  $\bar{M}_L$ (rep. $\bar{M}_R$)
be the forward (resp. backward) image
of $\omega(N_L)$ (resp. $\alpha(N_R)$) under the slow flow~\eqref{limnewslow}.
\begin{prop}\label{s-orbit}
 $\bar{M}_L$ and $\bar{M}_R$ intersect transversally along the
unique orbit given by~\eqref{genslow} from $x=0$ to $x=1$ with
 \[
\tau_0=0,\; w_0=(\alpha_1+  \alpha_2)(\alpha_1 l_1)^{\frac{\alpha_2}{\alpha_1+ \alpha_2}}
(\alpha_2 l_2)^{\frac{\alpha_1}{\alpha_1+  \alpha_2}},\;
\nu_0=\phi_0+\frac{1}{\alpha_1+ \alpha_2}\ln\frac{\alpha_1 l_1}{\alpha_2 l_2},\]
and $J_1$ and $J_2$ are as given in Theorem~\ref{main}.
\end{prop}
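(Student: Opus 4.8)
To prove Proposition~\ref{s-orbit} the plan is to translate the geometric statement into the solvability of a $2\times2$ algebraic system for $(J_1,J_2)$. Write $A_l=(\alpha_1 l_1)^{\frac{\alpha_2}{\alpha_1+\alpha_2}}(\alpha_2 l_2)^{\frac{\alpha_1}{\alpha_1+\alpha_2}}$, $A_r=(\alpha_1 r_1)^{\frac{\alpha_2}{\alpha_1+\alpha_2}}(\alpha_2 r_2)^{\frac{\alpha_1}{\alpha_1+\alpha_2}}$ and $\Theta=\int_0^1 h^{-1}(x)\,dx$, so that the $w$- and $\phi$-components of $\omega(N_L)$ are $w_0=(\alpha_1+\alpha_2)A_l$ and $\nu_0=\phi_0+\frac1{\alpha_1+\alpha_2}\ln\frac{\alpha_1 l_1}{\alpha_2 l_2}$, while those of $\alpha(N_R)$ are $w_R^{*}=(\alpha_1+\alpha_2)A_r$ and $\phi_R^{*}=\frac1{\alpha_1+\alpha_2}\ln\frac{\alpha_1 r_1}{\alpha_2 r_2}$. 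In the coordinates $(\phi,w,J_1,J_2,\tau)$ on ${\mathcal S}_0$, the set $\omega(N_L)$ is two-dimensional (parametrized by $(J_1,J_2)\in\bbR^2$, with $\phi=\nu_0$, $w=w_0$, $\tau=0$); flowing it forward by~\eqref{limnewslow}, whose general solution is~\eqref{genslow}, produces the three-dimensional manifold $\bar M_L$, parametrized by $(x,J_1,J_2)$ through~\eqref{genslow} with $\tau_0=0$ and the above $w_0,\nu_0$. The manifold $\bar M_R$ is obtained symmetrically as the backward image of $\alpha(N_R)$. Since ${\mathcal S}_0$ is five-dimensional, a transverse intersection $\bar M_L\cap\bar M_R$ is one-dimensional, i.e.\ a single orbit.

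First I would reduce the intersection to matching conditions. Because $\dot\tau=1$ and $J_1,J_2$ are first integrals of~\eqref{limnewslow}, equating the $\tau$-, $J_1$- and $J_2$-components of the two parametrizations forces the common flow time to run over $\tau\in[0,1]$ and forces the two pairs $(J_1,J_2)$ to agree. Consequently a point lies in $\bar M_L\cap\bar M_R$ exactly when the orbit~\eqref{genslow} issuing from $\omega(N_L)$ with these common $(J_1,J_2)$ reaches $\alpha(N_R)$ at $\tau=1$, i.e.\ when the two residual scalar conditions
\[ w(1)=w_R^{*},\qquad \phi(1)=\phi_R^{*} \]
hold.

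Next I would solve this $2\times2$ system explicitly. The $w$-relation in~\eqref{genslow} evaluated at $x=1$ is linear in $J_1+J_2$ and yields
\[ J_1+J_2=\frac{(\alpha_1+\alpha_2)(A_l-A_r)}{\alpha_1\alpha_2\,\Theta}. \]
With this value one has $1-\frac{\alpha_1\alpha_2(J_1+J_2)}{w_0}\Theta=w_R^{*}/w_0=A_r/A_l$, so the logarithm in the $\phi$-formula of~\eqref{genslow} equals $\ln\frac{A_r}{A_l}=\frac{\alpha_2}{\alpha_1+\alpha_2}\ln\frac{r_1}{l_1}+\frac{\alpha_1}{\alpha_1+\alpha_2}\ln\frac{r_2}{l_2}$, and the relation $\phi(1)=\phi_R^{*}$ becomes linear in $\alpha_2 J_2-\alpha_1 J_1$. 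Since $\det\begin{pmatrix}1&1\\ -\alpha_1&\alpha_2\end{pmatrix}=\alpha_1+\alpha_2>0$, the two linear relations for $J_1+J_2$ and $\alpha_2 J_2-\alpha_1 J_1$ determine $(J_1,J_2)$ uniquely; a direct simplification using the identity $\frac{\alpha_1+\alpha_2}{\alpha_1}\ln\frac{A_r}{A_l}-\ln\frac{r_2}{l_2}=\frac{\alpha_2}{\alpha_1}\ln\frac{r_1}{l_1}$ (and its mirror image $\frac{\alpha_1+\alpha_2}{\alpha_2}\ln\frac{A_r}{A_l}-\ln\frac{r_1}{l_1}=\frac{\alpha_1}{\alpha_2}\ln\frac{r_2}{l_2}$) identifies $J_1=\bar J_1/D_1$ and $J_2=\bar J_2/D_2$ with the flux densities of Theorem~\ref{main}. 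Along the resulting orbit $w$ is affine in $\int_0^x h^{-1}$ with endpoints $w_0>0$ and $w_R^{*}>0$, hence stays positive on $[0,1]$, so~\eqref{genslow} is valid there; together with $\tau_0=0$, $w_0$, $\nu_0$ this is the claimed unique orbit.

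Finally I would verify transversality. Writing $\bar M_L\cap\bar M_R$ as the zero set of the map $G$ comparing the two parametrizations in ${\mathcal S}_0\cong\bbR^5$, the $\tau$-, $J_1$- and $J_2$-blocks of $DG$ are already of full rank by the previous step, so transversality reduces to nondegeneracy of the Jacobian of $\big(w(1)-w_R^{*},\ \phi(1)-\phi_R^{*}\big)$ in the variables $a=J_1+J_2$ and $b=\alpha_2 J_2-\alpha_1 J_1$. That Jacobian is triangular, with diagonal entries $\partial_a\big(w(1)\big)=-\alpha_1\alpha_2\Theta\neq0$ and $\partial_b\big(\phi(1)\big)=-\ln(A_r/A_l)/(\alpha_1\alpha_2 a)\neq0$ (to be read as $\Theta/w_0\neq0$ in the borderline case $a=0$), so $DG$ has full rank $5$ all along the orbit, which gives the transversality. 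I expect the only genuine obstacle to be bookkeeping: checking term-by-term that the unique solution of the $2\times2$ system reproduces the formulas of Theorem~\ref{main} (the cancellation above), and handling the borderline configuration $A_l=A_r$ (equivalently $\ln(A_r/A_l)=0$), where $w\equiv w_0$ and $J_1+J_2=0$ but $\phi(1)=\phi_R^{*}$ is still linear in $\alpha_2 J_2-\alpha_1 J_1$, so a unique transverse connecting orbit persists and the Theorem~\ref{main} expressions are to be understood by continuity.
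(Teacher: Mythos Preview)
Your proposal is correct and follows essentially the same approach as the paper: both reduce the intersection $\bar M_L\cap\bar M_R$ to the matching conditions $w(1)=w_R^*$ and $\phi(1)=\phi_R^*$, solve the resulting linear system in the combinations $J_1+J_2$ and $\alpha_2 J_2-\alpha_1 J_1$, and then verify transversality by a Jacobian computation, treating the limiting case $J_1+J_2=0$ separately. The only cosmetic difference is that you compute the $2\times 2$ Jacobian in the variables $(a,b)=(J_1+J_2,\alpha_2 J_2-\alpha_1 J_1)$, where it is triangular, while the paper works directly in $(J_1,J_2)$ on the slice $\{\tau=1\}$ and, using $w_{J_1}=w_{J_2}$, reduces transversality to the single condition $\phi_{J_1}\neq\phi_{J_2}$; these are the same computation up to the linear change of coordinates with determinant $\alpha_1+\alpha_2$.
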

\begin{proof} We show first that  $\bar{M}_L$ and $\bar{M}_R$ intersect along
the orbit with the above characterization. In view of~\eqref{genslow} and the
descriptions for $\omega(N_L)$ and $\alpha(N_R)$ in Proposition~\ref{s-u-inter},
the intersection is uniquely determined by
\[\tau_0=0,\;
w(0)=(\alpha_1+  \alpha_2)(\alpha_1 l_1)^{\frac{  \alpha_2}{\alpha_1+ \alpha_2}}
(  \alpha_2 l_2)^{\frac{\alpha_1}{\alpha_1+ \alpha_2}},\]
\[w(1)=(\alpha_1+  \alpha_2)(\alpha_1 r_1)^{\frac{ \alpha_2}{\alpha_1+  \alpha_2}}
(  \alpha_2 r_2)^{\frac{\alpha_1}{\alpha_1+ \alpha_2}},\]
\[\nu_0=\phi(0)=\phi_0+\frac{1}{\alpha_1+ \alpha_2}\ln\frac{\alpha_1 l_1}{ \alpha_2 l_2},\;
\phi(1)=\frac{1}{\alpha_1+  \alpha_2}\ln\frac{\alpha_1 r_1}{\alpha_2 r_2}.\]
Substituting into~\eqref{genslow} gives
\begin{align*}
J_1+J_2=&\frac{\alpha_1+  \alpha_2}{\alpha_1 \alpha_2  \int_0^1h^{-1}(x)\,dx}
\left((\alpha_1 l_1)^{\frac{ \alpha_2}{\alpha_1+  \alpha_2}}
 (  \alpha_2 l_2)^{\frac{\alpha_1}{\alpha_1+  \alpha_2}}
 - (\alpha_1 r_1)^{\frac{ \alpha_2}{\alpha_1+  \alpha_2}}
 (  \alpha_2 r_2)^{\frac{\alpha_1}{\alpha_1+  \alpha_2}}\right),\\
  \alpha_2 J_2-\alpha_1 J_1=&\frac{(\alpha_1+  \alpha_2)
 \left((\alpha_1 l_1)^{\frac{  \alpha_2}{\alpha_1+  \alpha_2}}
 (  \alpha_2 l_2)^{\frac{\alpha_1}{\alpha_1+  \alpha_2}}
 - (\alpha_1 r_1)^{\frac{ \alpha_2}{\alpha_1+  \alpha_2}}
 (  \alpha_2 r_2)^{\frac{\alpha_1}{\alpha_1+  \alpha_2}}\right)}
{\left(\frac{  \alpha_2}{\alpha_1+  \alpha_2}\ln\frac{r_1}{l_1}+
\frac{\alpha_1}{\alpha_1+ \alpha_2}\ln\frac{r_2}{l_2}\right)\int_0^1h^{-1}(x)\,dx}\times\\
&\times\left(\phi_0+\frac{1}{\alpha_1+ \alpha_2}\ln\frac{l_1r_2}{l_2r_1}\right),
\end{align*}
which in turn yields the expressions for $J_1$ and $J_2$.
To see the transversality of the intersection, it suffices to show that
$\omega(N_L)\cdot 1 $ (the image of $\omega(N_L)$ under the time one map
of the flow of system~\eqref{limnewslow}) is transversal to $\alpha(N_R)$
on ${\mathcal S}_0\cap \{\tau=1\}$. If we use $(\phi, w, J_1,J_2)$ as
a coordinate system  on ${\mathcal S}_0\cap \{\tau=1\}$, then the set
 $\omega(N_L)\cdot 1 $ is  given  by $\{(\phi(J_1,J_2),w(J_1,J_2), J_1,J_2)\}$
with
\begin{align*}
\phi(J_1,J_2)=& \phi_0+\frac{1}{\alpha_1+ \alpha_2}\ln\frac{\alpha_1 l_1}{  \alpha_2 l_2}
-\frac{  \alpha_2 J_2-\alpha_1 J_1}{\alpha_1 \alpha_2 (J_1+J_2)}
   \ln\left(1-\frac{\rho_0\alpha_1 \alpha_2(J_1+J_2)}{w_0} \right), \\
 w(J_1,J_2)=& (\alpha_1+  \alpha_2)(\alpha_1 l_1)^{\frac{ \alpha_2}{\alpha_1+ \alpha_2}}
(  \alpha_2 l_2)^{\frac{\alpha_1}{\alpha_1+ \alpha_2}}
-\rho_0\alpha_1 \alpha_2(J_1+J_2),
\end{align*}
where $\rho_0=\int_0^1h^{-1}(x)\,dx.$
Thus, the tangent space to $\omega(N_L)\cdot 1$
restricted on ${\mathcal S}_0\cap \{\tau=1\}$ is
spanned by $(\phi_{J_1},w_{J_1},1, 0)=(\phi_{J_1},-\rho_0\alpha\beta,1, 0)$ and
$(\phi_{J_2},w_{J_2},0, 1)=(\phi_{J_2},-\rho_0\alpha\beta,0, 1)$.
In view of the display in Proposition~\ref{s-u-inter},
the tangent space to
$\alpha(N_R)$ restricted on ${\mathcal S}_0\cap \{\tau=1\}$ is
spanned by $(0,0,1,0)$ and $(0,0,0,1)$. Note that
${\mathcal S}_0\cap \{\tau=1\}$ is four dimensional. Thus, it suffices to show
that the above four vectors are linearly independent, or equivalently,
 $\phi_{J_1}\neq \phi_{J_2}$. The latter can be
verified by a direct computation. Indeed, if $J_1+J_2\neq 0$
at the intersection points, then
\[\phi_{J_1}-\phi_{J_2}
  =\frac{\alpha_1+ \alpha_2}{\alpha_1 \alpha_2 (J_1+J_2)}
   \ln\left(1-\frac{\rho_0\alpha_1 \alpha_2 (J_1+J_2)}{w_0}\right)\neq 0;\]
if $J_1+J_2=0$ at intersection points,  then
$\phi(J_1,J_2)=\nu_0+\rho_0(  \alpha_2 J_2-\alpha_1 J_1)/{w_0}$ and  hence
$\phi_{J_1}-\phi_{J_2}=-\rho_0(\alpha_1+ \alpha_2)/{w_0}\neq 0.$
\end{proof}

\subsection{ \bf Proof of Theorem~\ref{main}}\label{valid}
We provide a detailed version of Theorem~\ref{main} and its proof.   \begin{thm}\label{main2} Assume that $\alpha_1 l_1\neq   \alpha_2 l_2$
and $\alpha_1 r_1\neq   \alpha_2 r_2$.
For $\mu>0$ small, the connecting problem~\eqref{slow}
and~\eqref{left-right} has a unique solution near a singular orbit. The
singular orbit is the union of  two fast orbits of system~\eqref{limfast} and
one slow orbit of system~\eqref{limnewslow}; more precisely, with both
 $J_1$ and $J_2$ given in Theorem~\ref{main},

(i) the fast orbit representing the limiting boundary layer
at $x=0$ lies on $B_L\cap W^s({\mathcal Z}_0)$ from $B_L$ to
$\omega(N_L)\subset {\mathcal Z}_0$ whose starting point has the $u$-component
given by~\eqref{s-U} in Propositions~\ref{s-u-inter},

(ii) the fast orbit representing the limiting boundary layer
at $x=1$ lies on $B_R\cap W^u({\mathcal Z}_0)$ from $B_R$ to
$\alpha(N_R)\subset {\mathcal Z}_0$ whose starting point has the $u$-component
given by~\eqref{u-U} in Propositions~\ref{s-u-inter},

(iii) the slow orbit on ${\mathcal S}_0$
connecting the two boundary layers from $x=0$ to $x=1$ is displayed
in~\eqref{genslow} together with the quantities in Proposition~\ref{s-orbit}.
\end{thm}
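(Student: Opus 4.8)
The plan is to establish Theorem~\ref{main2} by a shooting argument in geometric singular perturbation theory, using the singular orbit already assembled in Propositions~\ref{s-u-inter} and~\ref{s-orbit} as the object along which the relevant invariant manifolds are tracked. For $\mu>0$ let $M_L^{\mu}$ be the $4$-dimensional submanifold of $\bbR^7$ swept out by the forward orbits of~\eqref{slow} from the $3$-dimensional set $B_L$ (the flow is transverse to $B_L$ since $\dot\tau=1$), and let $M_R^{\mu}$ be the $4$-dimensional submanifold swept out by the backward orbits reaching $B_R$. A solution of the connecting problem is exactly a point of $M_L^{\mu}\cap M_R^{\mu}$, and because $\dim M_L^{\mu}+\dim M_R^{\mu}-7=1$, a \emph{transversal} intersection is automatically a single orbit; hence it suffices to show that $M_L^{\mu}$ and $M_R^{\mu}$ intersect transversally near the singular orbit, which gives both existence and local uniqueness. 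The $\mu=0$ counterpart of $M_L^{\mu}\cap M_R^{\mu}$ is precisely the union of the left fast orbit on $B_L\cap W^s({\mathcal Z}_0)$, the slow orbit~\eqref{genslow} on ${\mathcal S}_0$, and the right fast orbit on $B_R\cap W^u({\mathcal Z}_0)$, with $J_1,J_2$ the constants displayed in Theorem~\ref{main}.

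First I would track $M_L^{\mu}$ forward. Near $B_L$, continuous dependence on $\mu$ together with the transversality of $B_L$ with $W^s({\mathcal Z}_0)$ from Proposition~\ref{s-u-inter}(iii) shows that $M_L^{\mu}$ is $C^1$ and $O(\mu)$-close to $M_L^0$, which near ${\mathcal Z}_0$ is foliated by the stable Fenichel fibers over $\omega(N_L)$; thus $M_L^{\mu}$ enters an $O(\mu)$-neighborhood of ${\mathcal Z}_0$. One cannot continue with a single Exchange Lemma here because the slow flow on ${\mathcal Z}_0$ is itself singular, so one passes to the blow-up coordinates $u=\mu p$, $v=\mu q$ of~\eqref{newslow}; in these coordinates $M_L^{\mu}$ sits at $O(1)$ distance from the slow manifold ${\mathcal S}_0$, which is normally hyperbolic since its normal eigenvalues $\pm\sqrt{w}$ are nonzero ($w=\alpha_1^2c_1+\alpha_2^2c_2>0$ on the invariant region). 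As already recorded in Section~\ref{outer}, the $\lambda$-Lemma gives that $M_L^{\mu}$ is $C^1$ $O(\mu)$-close to $M_L^0$ in an $O(1)$-neighborhood of ${\mathcal S}_0$ and crosses $W^s({\mathcal S}_0)$ transversally; the Exchange Lemma (see~\cite{JK, Jon, Liu}) then carries $M_L^{\mu}$ along the reduced slow flow~\eqref{limnewslow} for a time $O(\mu|\ln\mu|)$ and delivers it, upon exit, $C^1$ and $O(\mu)$-close to the unstable fibers of ${\mathcal S}_0$ over $\bar M_L$, the forward image of $\omega(N_L)$. Tracking $M_R^{\mu}$ backward the same way shows it is $C^1$ $O(\mu)$-close to the stable fibers of ${\mathcal S}_0$ over $\bar M_R$.

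With both descriptions in hand, I would choose a section $\Sigma$ transverse to the singular orbit near $\tau=1$ and compare the traces of $M_L^{\mu}$ and $M_R^{\mu}$ on it. Proposition~\ref{s-orbit} provides that $\bar M_L$ and $\bar M_R$ meet transversally within ${\mathcal S}_0$ along the orbit~\eqref{genslow}; combining this with the transversality of the unstable and stable fibers in the fast directions upgrades, for $\mu>0$ small, to a transversal intersection $M_L^{\mu}\cap M_R^{\mu}$ consisting of a single orbit $\Gamma^{\mu}$ that is $O(\mu)$-close to the singular orbit and is the unique connecting orbit near it. Its flux components $J_1,J_2$ are the ones pinned down in Proposition~\ref{s-orbit}; undoing the substitutions $\tau=x$, $u=\mu h(\tau)\dot\phi$, $v=-h(\tau)(\alpha_1c_1-\alpha_2c_2)$, $w=\alpha_1^2c_1+\alpha_2^2c_2$ then converts $\Gamma^{\mu}$ into the unique solution of~\eqref{SSPNP}--\eqref{SSBV} near the singular profile and yields the flux formulas of Theorem~\ref{main}.

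The step I expect to be the main obstacle is handling the nested, doubly singular structure at ${\mathcal Z}_0$: one must stitch the $C^1$-estimates across the rescaling $u=\mu p$, $v=\mu q$ so that an $O(\mu)$-neighborhood of ${\mathcal Z}_0$ in $(u,v)$ matches an $O(1)$-neighborhood of ${\mathcal S}_0$ in $(p,q)$, and then verify the nondegeneracy (``corner'') conditions at each junction where a fast orbit meets the slow orbit --- exactly the transversality assertions of Propositions~\ref{s-u-inter} and~\ref{s-orbit} --- so that the hypotheses of the Exchange Lemma are met and the composed tracking estimates remain uniform in $\mu$ as $\mu\to 0$.
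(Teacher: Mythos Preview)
Your proposal is correct and follows essentially the same route as the paper: set up the shooting manifolds $M_L^{\mu},M_R^{\mu}$, use Proposition~\ref{s-u-inter} for transversality with $W^{s/u}({\mathcal Z}_0)$, pass to the blow-up~\eqref{newslow} to reach the normally hyperbolic ${\mathcal S}_0$, invoke the Exchange Lemma with Proposition~\ref{s-orbit} supplying the transversality of $\bar M_L$ and $\bar M_R$, and conclude by the dimension count $4+4-7=1$. One small slip: the slow passage along ${\mathcal S}_0$ is an $O(1)$ interval in $x$ (from $\tau=0$ to $\tau=1$), not $O(\mu|\ln\mu|)$; the latter is the \emph{fast} time the boundary layers need to reach an $O(\mu)$-neighborhood of ${\mathcal Z}_0$, and the paper also notes that the hypothesis $\alpha_1 l_1\neq\alpha_2 l_2$, $\alpha_1 r_1\neq\alpha_2 r_2$ is what guarantees the fast vector field~\eqref{fast} is not tangent to $B_L,B_R$ (your $\dot\tau=1$ argument handles the slow system but not the $\mu=0$ fast limit).
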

\begin{proof} The singular orbit has been studied in Sections~\ref{inner}
and~\ref{outer}, which is summarized in (i), (ii) and (iii) of this theorem.
 It remains to show the existence and uniqueness of a solution
near the singular orbit for $\mu>0$.
Recall that $M_L^{\mu}$ (resp., $M_R^{\mu}$)
is the union of all forward (resp., backward) orbits starting from $B_L$ (resp.,
$B_R$).  It suffices to show that, for $\mu>0$ small,
$M_L^{\mu}$ and $M_R^{\mu}$ intersect transversally with each other
around the singular orbit. We note that the assumption
$\alpha_1 l_1\neq   \alpha_2 l_2$ and $\alpha_1 r_1\neq   \alpha_2 r_2$ imply the vector field
of~\eqref{slow} is not tangent to $B_L$ and $B_R$, and hence,
$M_L^{\mu}$ and $M_R^{\mu}$ are smooth invariant manifolds.

For $\mu>0$ small, the evolutions of
$M_L^{\mu}$ and $M_R^{\mu}$ from $B_L$ and $B_R$, respectively, to an
$\mu$-neighborhood of ${\mathcal Z}_0$ along the two boundary layers
are governed by system~\eqref{fast}. Since, for system~\eqref{limfast},
$M_L^0$ and $M_R^0$ intersect $W^s({\mathcal Z}_0)$ and $W^u({\mathcal Z}_0)$
transversally, we have that  $M_L^{\mu}$  and $M_R^{\mu}$
intersect $W^s({\mathcal Z}_0)$ and $W^u({\mathcal Z}_0)$ transversally.
As discussed in  Section~\ref{outer}, in terms of the blow-up coordinates,
$M_L^{\mu}$  and $M_R^{\mu}$  intersect $W^s({\mathcal S}_0)$ and
$W^u({\mathcal S}_0)$ transversally for system~\eqref{newfast}.
And, if we denote $N_L=M_L^{0}\cap W^s({\mathcal S}_0)$ and
$N_R=M_R^{0}\cap W^u({\mathcal S}_0)$, then the vector field on ${\mathcal S}_0$
is not tangent to   $\omega(N_L)$  and $\alpha(N_R)$.  Furthermore,
the traces $\bar{M}_L$  and $\bar{M}_R$ of $\omega(N_L)$ and $\alpha(N_R)$
respectively under the slow flow on ${\mathcal S}_0$ intersect transversally.
All conditions for the Exchange Lemma (see~\cite{TKJ} and
also~\cite{JK, Jon, JKK})
are satisfied, and hence,  $M^{\mu}_L$ and $M^{\mu}_R$ intersect
transversally. The intersection  has dimension
\[\dim M^{\mu}_L+\dim M^{\mu}_R-7=4+4-7=1,\]
which is the orbit of the unique solution for the connecting problem near
the singular orbit.
\end{proof}

\begin{rem}\label{nolayer} We have considered the situation that
$\alpha_1 l_1\neq   \alpha_2 l_2$ and $\alpha_1 r_1\neq   \alpha_2 r_2$.
In case that $\alpha_1 l_1=  \alpha_2 l_2$ or $\alpha_1 r_1=  \alpha_2 r_2$,
then $B_L$ or $B_R$ are on the slow manifold ${\mathcal S}_0$
and hence there is no boundary layer at $x=0$ or $x=1$.
\end{rem}

 \subsection{A Special Case} We  conclude the paper by examining a special case.
Consider  the one-dimensional limit PNP system~\eqref{pnp}
with the special  boundary conditions
\begin{align}\label{sbv}
\phi(0)  = \phi_0,    \;  \phi (1)=0, \;
\alpha_1 c_1 (0) = \alpha_1 c_1 (1)=  \alpha_2 c_2(0)=  \alpha_2 c_2(1)=k>0.
\end{align}
One sees that $(c_1^0(x),c_2^0(x),\phi^0(x))=(k/{\alpha_1},  k/{\alpha_2}, (1-x)\phi_0)$ is a steady-state solution. Motivated by many works on Lyapunov functions for systems of PNP-types (see, e.g., \cite{BD}), we set
\[L(t)=\sum_{j=1}^2\frac{1}{D_j}\int_0^1h(x)\left(c_j(x,t)-c_j^0(x)\right)\ln\frac{c_j(x,t)}{c_j^0(x)}.\]
 It turns out that $L(t)$ is a Lyapunov functional. In fact, using the equation and integration by parts,
\begin{align*}
L'(t)=& \sum_{j=1}^2\frac{1}{D_j}\int_0^1h(x)\partial_tc_j(x,t)\left( \ln\frac{c_j(x,t)}{c_j^0(x)}+
 \frac{(c_j(x,t)-c_j^0(x))}{c_j(x,t)}\right)\\
=&-\int_0^1h\left(\frac{c_1+c_1^0}{c_1^2}(\partial_xc_1)^2
+\frac{c_2+c_2^0}{c_2^2}(\partial_xc_2)^2\right)\\
&-\lambda\int_0^1h(\alpha_1 c_1-  \alpha_2 c_2)^2-\lambda k\int_0^1h(\alpha_1 c_1-  \alpha_2 c_2)(\ln (\alpha_1 c_1)-\ln(  \alpha_2 c_2))\le 0.
\end{align*}
Also, $L'(t)=0$ if and only if $\partial_xc_1=\partial_xc_2=\alpha_1 c_1-  \alpha_2 c_2=0$, which in turn imply that $c_1=c_1^0$, $c_2=c_2^0$ and $\phi=\phi^0$.  Due to the invariant principle (Proposition \ref{invariantregion} for one-dimensional case), one can check that $L(t)$ is equivalent to
\[\sum_{i=1}^2\int_0^1(c_i(x,t)-c_i^0(x))^2\,dx\]
if $c_i(x,0)>0$ for $x\in [0,1]$, and hence, $c_i\to c_i^0$ in $L^2(0,1)$ exponentially as $t\to\infty$. This   shows a significant difference in asymptotic behavior between the total non-flux boundary conditions (see~\cite{BD}) and the boundary conditions~\eqref{BV}  considered in this work  for the PNP systems.

\bibliographystyle{plain}

\end{document}